\newcommand{\sss}{\scriptscriptstyle}
\newcommand{\prob}{\mathbb{P}}
\newcommand{\Prob}[1]{\prob\left(#1\right)}
\newcommand{\Probn}[1]{\prob_n\left(#1\right)}
\newcommand{\expec}{\mathbb{E}}
\newcommand{\Exp}[1]{\expec\left[#1\right]}
\newcommand{\Expn}[1]{\expec_n\left[#1\right]}
\newcommand{\Var}[1]{\textup{Var}\left(#1\right)}
\newcommand{\Varn}[1]{\textup{Var}_n\left(#1\right)}
\newcommand{\plim}{\ensuremath{\stackrel{\prob}{\longrightarrow}}}
\newcommand{\ind}[1]{\mathbbm{1}_{\left\{#1\right\}}}
\newcommand{\bigO}[1]{O\left(#1\right)}
\newcommand{\bigOp}[1]{O_{\sss\prob}\left(#1\right)}
\newcommand{\abs}[1]{\left|#1\right|}
\newcommand{\me}{\textup{e}}
\newcommand{\dd}{{\rm d}}
\newtheorem{theorem}{Theorem}[section]
\newtheorem{lemma}[theorem]{Lemma}
\newtheorem{proposition}[theorem]{Proposition}
\newcommand{\op}{o_{\sss\prob}}
\newcommand{\bfdit}{\boldsymbol{d}}
\newcommand{\bfDit}{\boldsymbol{D}}
\newcommand{\Der}{D^{\sss\mathrm{(er)}}}
\newcommand{\e}{{\mathrm e}}
\newcommand{\Nn}{N^{\sss(n)}}
\newcommand{\Mn}{M^{\sss(n)}}
\newcommand{\indic}[1]{\mathbbm{1}_{\{#1\}}}
\newcommand{\CMnd}{{\rm CM}_n(\boldsymbol{d})}
\numberwithin{equation}{section}
\title{Triadic closure in configuration models\\ with unbounded degree fluctuations}
\author[1]{Remco van der Hofstad}
\author[1]{Johan S.H. van Leeuwaarden}
\author[1]{Clara Stegehuis}
\affil[1]{Department of Mathematics and Computer Science, Eindhoven University of Technology}
\begin{document}
	
	\maketitle
	
	\begin{abstract}
The configuration model generates random graphs with any given degree distribution, and thus serves as a null model for scale-free networks with power-law degrees and unbounded degree fluctuations. For this setting, we study the local clustering $c(k)$, i.e., the probability that two neighbors of a degree-$k$ node are neighbors themselves. We show that $ c(k)$ progressively falls off with $k$ and eventually for $k=\Omega(\sqrt{n})$ settles on a power law $c(k)\sim k^{-2(3-\tau)}$ with $\tau\in(2,3)$ the power-law exponent of the degree distribution. This fall-off has been observed in the majority of real-world networks and signals the presence of modular or hierarchical structure.
Our results agree with recent results for the hidden-variable model~\cite{stegehuis2017} and also give the expected number of triangles in the configuration model when counting triangles only once despite the presence of multi-edges. We show that only triangles consisting of triplets with uniquely specified degrees contribute to the triangle counting. 
	\end{abstract}
	
	\section{Introduction}
	\label{sec-intro}
	Random graphs can be used to model many different types of networked structures such as communication networks, social networks and biological networks. Many of these real-world networks display similar characteristics. A well-known characteristic of many real-world networks is that the degree distribution follows a power law. Another such property is that they are highly clustered. 
	Several statistics to measure clustering exist. The global clustering coefficient measures the fraction of triangles in the network. A second measure of clustering is the \emph{local clustering coefficient}, which measures the fraction of triangles that arise from one specific node.
	
	The local clustering coefficient $c(k)$ of vertices of degree $k$ decays when $k$ becomes large in many real-world networks. In particular, the decay was found to behave as an inverse  power of $k$ for $k$ large enough, so that $c(k)\sim k^{-\gamma}$ for some $\gamma>0$~\cite{vazquez2002,ravasz2003,serrano2006b,catanzaro2004,leskovec2008}, where most real-world networks were found to have $\gamma$ close to one. Figure~\ref{fig:ckex} shows the local clustering coefficient for a technological network (the Google web graph~\cite{snap}), an information network (hyperlinks of the online encyclopedia Baidu~\cite{niu2011}) and a social network (friends in the Gowalla social network~\cite{snap}). We see that for small values of $k$, $c(k)$ decays slowly. When $k$ becomes larger, the local clustering coefficient indeed seems to decay as an inverse power of $k$. Similar behavior has been observed for more real-world networks \cite{stegehuis2017}. The decay of the local clustering coefficient $c(k)$ in $k$ is considered an important empirical observation, because it signals the presence of hierarchical network structure~\cite{ravasz2003}, where high-degree vertices connect groups of clustered small-degree vertices.
	\begin{figure}[tb]
		\centering
		\begin{subfigure}{0.32\linewidth}
			\centering
			\includegraphics[width=\textwidth]{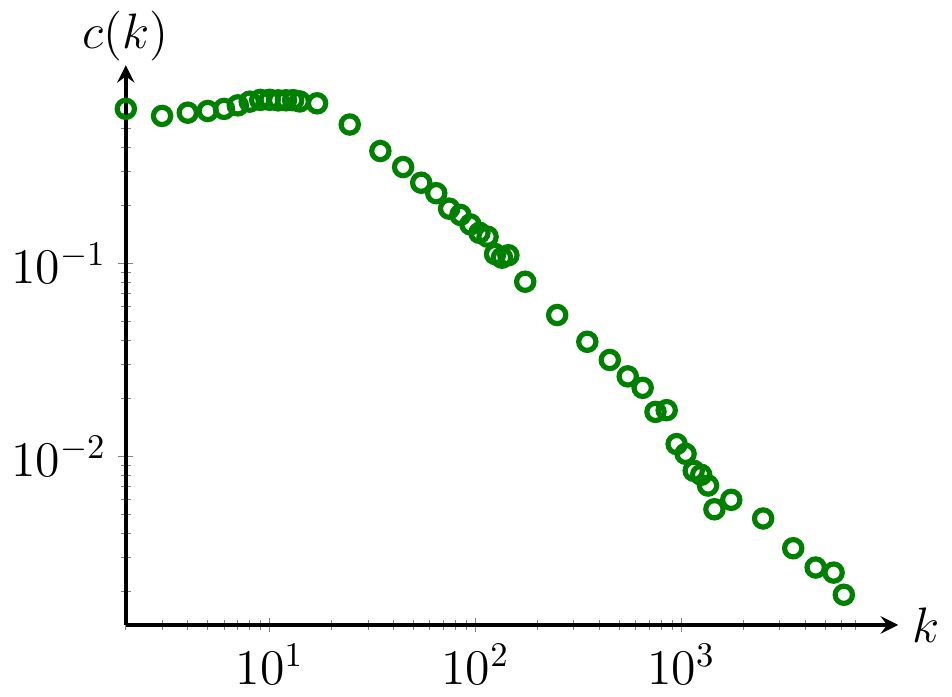}
			\caption{Google web graph~\cite{snap}}
		\end{subfigure}
	\begin{subfigure}{0.32\linewidth}
		\centering
		\includegraphics[width=\textwidth]{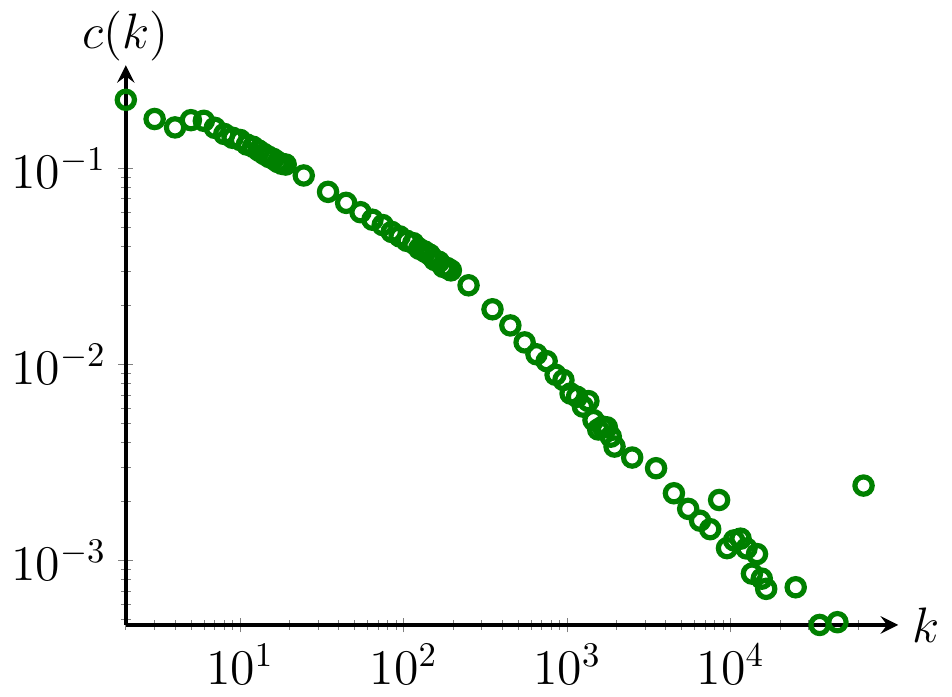}
		\caption{Baidu online encyclopedia~\cite{niu2011}}
\end{subfigure}
\begin{subfigure}{0.32\linewidth}
	\centering
	\includegraphics[width=\textwidth]{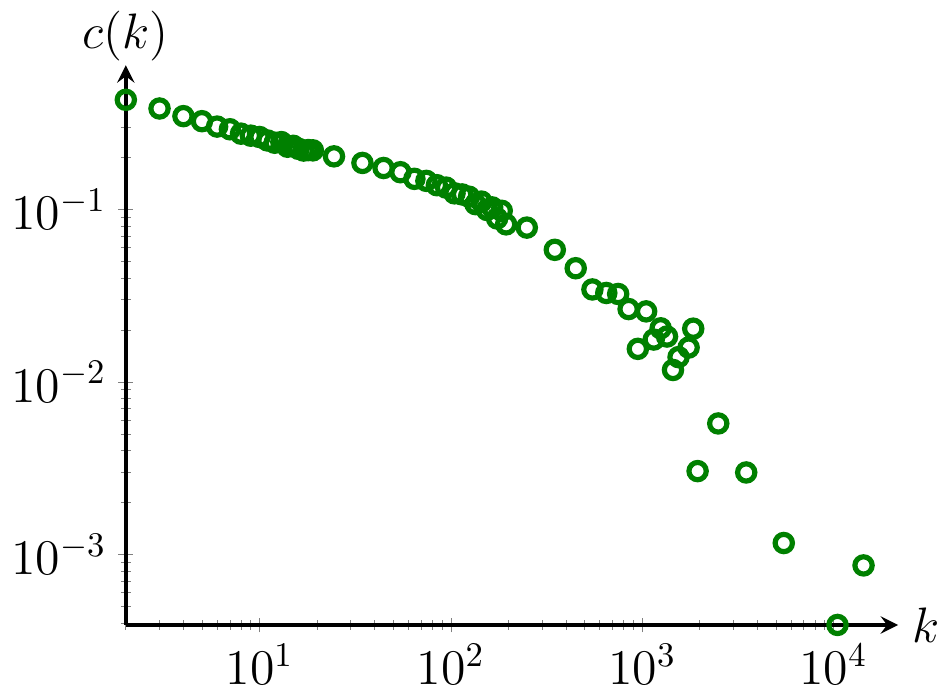}
	\caption{Gowalla social network~\cite{snap}}
\end{subfigure}
		\caption{Local clustering coefficient $c(k)$ for three real-world networks.}
		\label{fig:ckex}
	\end{figure}

	In this paper we analyze $c(k)$ for networks with a power-law degree distribution with degree exponent $\tau\in(2,3)$, the situation that describes the majority of real-world networks~\cite{albert1999,faloutsos1999,jeong2000,stegehuis2017}. To analyze $c(k)$, we consider the configuration model in the large-network limit, and count the number of triangles where at least one of the vertices has degree $k$. When the degree exponent $\tau>3$, the total number of triangles in the configuration model converges to a Poisson random variable~\cite[Chapter 7]{hofstad2009}. When $\tau\in(2,3)$, the configuration model consists of many self-loops and multiple edges~\cite{hofstad2009}. This creates multiple ways of counting the number of triangles, as we will show below. In this paper, we count the number of triangles from a \textit{vertex} perspective, which is the same as counting the number of triangles in the erased configuration model, where all self-loops have been removed and multiple edges have been merged.
	
We show that the local clustering coefficient remains a constant times $n^{2-\tau}\log(n)$ as long as $k\ll \sqrt{n}$. After that, $c(k)$ starts to decay as $c(k)\sim k^{-\gamma}n^{5-2\tau}$. We show that this exponent $\gamma$ depends on $\tau$ and can be larger than one. In particular, when the power-law degree exponent $\tau$ is close to two, the exponent $\gamma$ approaches two, a considerable difference with the preferential attachment model or several fractal-like random graph models that predict $c(k)\sim k^{-1}$~\cite{krot2015,ravasz2003,dorogovtsev2002}. Related to this result on the $c(k)$ fall-off, we also show that for every node with fixed degree $k$ only pairs of nodes with specific degrees contribute to the triangle count and hence local clustering. 

The paper is structured as follows. Section~\ref{sec:basic} contains a detailed description of the configuration model and the triangle count. We present our main results in Section~\ref{sec:main}, including Theorem~\ref{thm:ck} that describes the three ranges of $c(k)$. The remaining sections prove all the main results, and in particular focus on establishing  Propositions~\ref{prop:major} and~\ref{prop:minor} that are crucial for the proof of Theorem~\ref{thm:ck}.


\section{Basic notions}\label{sec:basic}
		
	\paragraph{Notation.}\label{sec:notation}
	We use $\overset{d}\longrightarrow$ for convergence in distribution, and $\plim $ for convergence in probability. We say that a sequence of events $(\mathcal{E}_n)_{n\geq 1}$ happens with high probability (w.h.p.) if $\lim_{n\to\infty}\Prob{\mathcal{E}_n}=1$. Furthermore, we write $f(n)=o(g(n))$ if $\lim_{n\to\infty}f(n)/g(n)=0$, and $f(n)=O(g(n))$ if $|f(n)|/g(n)$ is uniformly bounded, where $(g(n))_{n\geq 1}$ is nonnegative. Similarly, if $\limsup_{n\to\infty}\abs{f(n)}/g(n)>0$, we say that $f(g)=\Omega(g(n))$ for nonnegative $(g(n))_{n\geq 1}$. We write $f(n)=\Theta(g(n))$ if $f(n)=O(g(n) )$ as well as $f(n)=\Omega(g(n))$. We say that $X_n=O_{\sss{\prob}}(g(n))$ for a sequence of random variables $(X_n)_{n\geq 1}$ if $|X_n|/g(n)$ is a tight sequence of random variables, and $X_n=o_{\sss{\prob}}(g(n))$ if $X_n/g(n)\plim 0$.
	
	\paragraph{The configuration model.}
	Given a positive integer $n$ and a \emph{degree sequence}, i.e., a sequence of $n$ positive integers $\bfdit=(d_1,d_2,\ldots, d_n)$, the \emph{configuration model} is a (multi)graph where vertex $i$ has degree $d_i$. It is defined as follows, see e.g., \cite{bollobas2001} or \cite[Chapter 7]{hofstad2009}: Given a degree sequence $\bfdit$  with $\sum_{i\in[n]} d_i$ even, we start with $d_j$ free half-edges adjacent to vertex $j$, for $j=1, \ldots, n$. The random multigraph $\CMnd$ is constructed by successively pairing, uniformly at random, free half-edges into edges, until no free half-edges remain. (In other words, we create a uniformly random matching of the half-edges.) The wonderful property of the configuration model is that, conditionally on obtaining a simple graph, the resulting graph is a {\em uniform} graph with the prescribed degrees. This is why $\CMnd$ is often used as a {\em null model} for real-world networks with given degrees. 
	
	In this paper, we study the setting where the degree distribution has {\em infinite variance}. Then the number of self-loops and multiple edges tends to infinity in probability (see e.g., \cite[Chapter 7]{hofstad2009}), so that the configuration model results in a multigraph with high probability. 
	%
		In particular, we take the degrees $\bfdit$ to be an i.i.d.\ sample of a random variable $D$ such that
	\begin{equation}
		\label{D-tail}
		\prob(D=k)=C k^{-\tau}(1+o(1)),
	\end{equation}
	when $k\rightarrow \infty$, where $\tau\in(2,3)$ so that $\expec[D^2]=\infty$. When this sample constructs a sequence such that the sum of the variables is odd, we add an extra half-edge to the last vertex to obtain the degree sequence. This does not affect our computations. In this setting, $d_{\max}=\bigOp{n^{1/(\tau-1)}}$, where $d_{\max}=\max_{v\in[n]} d_v$ denotes the maximal degree of the degree sequence.   
	
	\paragraph{Counting triangles.}
	Let $G=(V,E)$ denote a configuration model with vertex set $V=[n]:=\{1,\ldots, n\}$ and edge set $E$. We are interested in the number of triangles in $G$. There are two ways to count triangles in the configuration model. The first approach is from an \emph{edge perspective}, as illustrated in Figure~\ref{fig:CMECM}. This approach counts the number of triples of edges that together create a triangle. This approach may count multiple triangles between one fixed triple of vertices. Let $X_{ij}$ denote the number of edges between vertex $i$ and $j$. Then, from an edge perspective, the number of triangles in the configuration model is
	\begin{equation}
		\sum_{1\leq i<j < k\leq n }X_{ij}X_{jk}X_{ik}.
	\end{equation}
	A different approach is to count the number of triangles from a \emph{vertex perspective}. This approach counts the number of triples of vertices that are connected. Counting the number of triangles in this way results in
	\begin{equation}
		\sum_{1\leq i<j < k\leq n }\ind{X_{ij}\geq 1}\ind{X_{jk}\geq 1}\ind{X_{ik}\geq 1}.
	\end{equation}
	When the configuration model results in a simple graph, these two approaches give the same result. When the configuration model results in a multigraph, these two approaches may give very different numbers of triangles. In particular, when the degree distribution follows a power-law with $\tau\in(2,3)$, the number of triangles is dominated by the number of triangles between the vertices of the highest degrees, even though only few such vertices are present in the graph~\cite{newman2010}. When the exponent $\tau$ of the degree distribution approaches 2, then the number of triangles between the vertices of the highest degrees will be as high as $n^3$, which is much higher than the number of triangles we would expect in any real-world network of that size. When we count triangles from a vertex perspective, we count only one triangle between these three vertices. Thus, the number of triangles from the vertex perspective will be significantly lower. In this paper, we focus on the vertex based approach for counting triangles. Note that this approach is the same as counting triangles in the \emph{erased configuration model}, where all multiple edges have been merged, and the self-loops have been removed.
	
	Let $\triangle_k$ denote the number of triangles attached to vertices of degree $k$. Note that when a triangle consists of two vertices of degree $k$, it is counted twice in $\triangle_k$.  Let $N_k$ denote the number of vertices of degree $k$.
	Then, the clustering coefficient of vertices with degree $k$ equals
	\begin{equation}
		\label{c(k)-def}
		c(k)=
		\frac{1}{N_k}\frac{2\triangle_k}{k(k-1)}.
	\end{equation}
	When we count $\triangle_k$ from the vertex perspective, this clustering coefficient can be interpreted as the probability that two random connections of a vertex with degree $k$ are connected. This version of $c(k)$ is the local clustering coefficient of the erased configuration model.

	\begin{figure}[tb]
		\centering
		\begin{subfigure}{0.45\linewidth}
			\centering
			\includegraphics[width=0.45\textwidth]{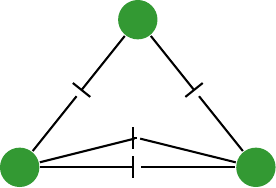}
			\caption{CM}
			\label{fig:triangCM}
		\end{subfigure}
		\begin{subfigure}{0.45\linewidth}
			\centering
			\includegraphics[width=0.45\textwidth]{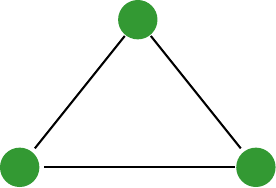}
			\caption{ECM}
			\label{fig:triangECM}
		\end{subfigure}
		\caption{From the edge perspective in the configuration model, these are two triangles. From the vertex perspective, there is only one triangle.}
		\label{fig:CMECM}
	\end{figure}
		
	
	
\section{Main results}\label{sec:main}

The next theorem presents our main result on the behavior of the local clustering coefficient in the erased configuration model. 
	\begin{theorem}\label{thm:ck}
		Let $G$ be an erased configuration model, where the degrees are an i.i.d.\ sample from a power-law distribution with exponent $\tau\in(2,3)$ as in~\eqref{D-tail} with $\tau\in(2,3)$. Define $A=-\Gamma(2-\tau)>0$ for $\tau\in(2,3)$ and let $\mu=\Exp{D}$.
		\begin{enumerate}[leftmargin=* , labelsep=1.5cm, 
			align=LeftAlignWithIndent, itemsep=-0.1cm]
			\item [\textup{(Range I.)}]  For $k\ll n^{(\tau-2)/(\tau-1)}$,
			\begin{equation}
				\frac{c(k)}{n^{2-\tau}\log(n)}\plim \frac{3-\tau}{\tau-1} \mu^{-\tau} C^2A.
			\end{equation}
			\item
			[\textup{(Range II.)}] When $an^{(\tau-2)/(\tau-1)}\leq k\ll \sqrt{n}$ for some $a>0$,
			\begin{equation}
				\frac{c(k)}{n^{2-\tau}\log(n/k^2)}\plim \mu^{-\tau} C^2 A .
			\end{equation}
			\item 
			[\textup{(Range III.)}] For $\sqrt{n}\ll k\ll d_{\max}$,
			\begin{equation}
				\frac{c(k)}{n^{5-2\tau}k^{2\tau-6}}\plim \mu^{3-2\tau}C^2 A^2.
			\end{equation}
		\end{enumerate} 
	\end{theorem}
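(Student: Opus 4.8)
The plan is a first--moment reduction followed by the asymptotic evaluation of a double sum; throughout write $\ell_n:=\sum_{i\in[n]}d_i$, so $\ell_n/n\plim\mu$, and set $p_{ab}:=1-\e^{-d_ad_b/\ell_n}$. \textbf{Step 1 (reduction to connection probabilities).} Conditionally on $\bfdit$, the number $X_{ij}$ of edges between $i$ and $j$ in $\CMnd$ behaves roughly like a Poisson variable with mean $d_id_j/\ell_n$, and, more to the point, $\Prob{i\sim j\mid\bfdit}=\Prob{X_{ij}\geq1\mid\bfdit}=p_{ij}(1+o(1))$, where $i\sim j$ denotes adjacency in the erased model; for the three \emph{disjoint} pairs of a triangle the corresponding events are moreover asymptotically independent, so $\Prob{v\sim i,\,v\sim j,\,i\sim j\mid\bfdit}=p_{vi}p_{vj}p_{ij}(1+o(1))$. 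Since $N_k$ concentrates on $n\Prob{D=k}$ and, by a second--moment estimate, $\triangle_k$ concentrates on its conditional mean $\tfrac12N_k\sum_{i\neq j}p_{ki}p_{kj}p_{ij}$, it suffices to analyse
\begin{equation}
c(k)=\frac{2\triangle_k}{N_k\,k(k-1)}=\frac{1}{k^2}\sum_{i\neq j}p_{ki}p_{kj}p_{ij}\,\bigl(1+\op(1)\bigr).
\end{equation}
Separating the ``typical'' part of this sum from the contribution of the atypically many half--edges at the hubs is exactly the role of Propositions~\ref{prop:major} and~\ref{prop:minor}.

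\textbf{Step 2 (locating the dominant degrees).} To see where the mass of the sum sits I would first replace $p_{ab}$ by $\min(d_ad_b/\ell_n,1)$, restoring the exact factors only when computing the constant. Writing $d_i=n^a$, $d_j=n^b$, $k=n^\beta$ with $a,b\in[0,\tfrac1{\tau-1}]$, the logarithmic order of the contribution of degrees near $(n^a,n^b)$ is governed by the concave function
\begin{equation}
\Phi(a,b)=\min(\beta+a-1,0)+\min(\beta+b-1,0)+\min(a+b-1,0)-(\tau-1)(a+b),
\end{equation}
whose partial slopes take values in $\{3-\tau,\,2-\tau,\,-(\tau-1)\}$. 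A short case analysis then gives: for $\beta<\tfrac12$ the maximum of $\Phi$ is attained along the whole segment $\{a+b=1\}$ with $d_i$ ranging between $\max(k,\ell_n/d_{\max})$ and $\min(\ell_n/k,d_{\max})$, where $\Phi\equiv2\beta-\tau$; the integrand is then flat in log--scale, which produces the logarithmic factor --- equal to $\tfrac{3-\tau}{\tau-1}\log n$ when the segment is cut off by $d_{\max}$, i.e.\ for $\beta<\tfrac{\tau-2}{\tau-1}$ (Range~I), and to $\log(n/k^2)$ otherwise (Range~II). For $\beta>\tfrac12$ the maximiser is instead the \emph{isolated} point $d_i\approx d_j\approx\ell_n/k$, and no logarithm survives (Range~III). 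This dichotomy is also the mechanism behind the announced fact that only triples with narrowly prescribed degrees contribute to $c(k)$.

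\textbf{Step 3 (the constant).} On the ridge, where $d_id_j=\Theta(\ell_n)$ while $kd_i/\ell_n,kd_j/\ell_n\to0$, one has $p_{ki}p_{kj}\sim k^2d_id_j/\ell_n^2$; replacing the degree counts by the densities $nCs^{-\tau}$ and substituting $v=d_id_j/\ell_n$ collapses the double sum to a product of one--dimensional integrals,
\begin{equation}
\frac{1}{k^2}\sum_{i\neq j}p_{ki}p_{kj}p_{ij}\;\sim\;n^2C^2\ell_n^{-\tau}\,A\int_{s_-}^{s_+}\frac{\dd s}{s}\;=\;\mu^{-\tau}C^2A\,n^{2-\tau}\,\log\frac{s_+}{s_-},
\end{equation}
where $A=\int_0^\infty v^{1-\tau}(1-\e^{-v})\,\dd v=-\Gamma(2-\tau)$, and $(s_-,s_+)=\bigl(\max(k,\ell_n/d_{\max}),\,\min(\ell_n/k,d_{\max})\bigr)$; since $d_{\max}\sim(Cn/(\tau-1))^{1/(\tau-1)}$ this reproduces $\tfrac{3-\tau}{\tau-1}\log n$ in Range~I and $\log(n/k^2)$ in Range~II. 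For $\beta>\tfrac12$ one instead has $p_{ij}\sim d_id_j/\ell_n$ throughout, the double sum factorises as $(k^2\ell_n)^{-1}\bigl(\sum_i d_ip_{ki}\bigr)^2$, and with $\sum_i d_ip_{ki}\sim nCA(k/\ell_n)^{\tau-2}$ --- the anomalous term of $\Exp{D(1-\e^{-tD})}=CAt^{\tau-2}(1+o(1))$ at $t=k/\ell_n$ --- one gets $\mu^{3-2\tau}C^2A^2\,n^{5-2\tau}k^{2\tau-6}$, the second power of $A$ coming from the two factors $p_{ki}$ now both sitting at the crossover $d_i\approx\ell_n/k$.

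\textbf{Main obstacle.} I expect Step~1 to be the genuinely delicate part. One must make the reduction rigorous \emph{uniformly over the degree scales that matter}: in particular, on the ridge $d_id_j=\Theta(\ell_n)$ the variable $X_{ij}$ is not Poisson in total variation, and one needs the more robust statement $\Prob{X_{ij}\geq1\mid\bfdit}=1-\e^{-d_id_j/\ell_n}(1+o(1))$ together with the near--independence of the three pair--events. Equally demanding is the matching variance bound for $\triangle_k$: the heavy tail means a handful of hubs could a priori dominate the fluctuations, so one truncates the degrees at a suitable level, bounds the contribution of the truncated part, and proves concentration of the remainder --- which is precisely what Propositions~\ref{prop:major} and~\ref{prop:minor} are designed to do. Granting those, Steps~2 and~3 are technically lengthy but, given the concavity of $\Phi$ and the elementary Laplace--type estimates, essentially mechanical.
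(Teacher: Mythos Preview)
Your proposal is correct and matches the paper's approach: reduce $c(k)$ to the product form $p_{ki}p_{kj}p_{ij}$ via the asymptotic factorisation of the triangle probability (the paper's Lemma~\ref{lem:expk}), locate the dominant degrees --- your exponent function $\Phi$ is exactly what motivates the windows $W_n^k(\varepsilon)$ of~\eqref{eq:wkn} --- bound the complement by a first moment (Proposition~\ref{prop:minor}, carried out in Lemma~\ref{lem:bexp}), and prove concentration on the main part by a conditional second-moment argument (Lemma~\ref{lem:condvar}) together with a random-measure convergence for the conditional mean (Lemmas~\ref{lem:convksmall}--\ref{lem:convklarge}). One small slip to fix when you write it up: your Range~I cutoffs invoke $d_{\max}\sim(Cn/(\tau-1))^{1/(\tau-1)}$, which is false since $d_{\max}/n^{1/(\tau-1)}$ has a nondegenerate Fr\'echet limit rather than a constant one; this is harmless here because only $\log d_{\max}/\log n\plim 1/(\tau-1)$ enters the answer, and the paper sidesteps the issue altogether by working with deterministic windows $K_1 n^{(\tau-2)/(\tau-1)}<D_u<K_2 n^{1/(\tau-1)}$ and sending $K_1\to0$, $K_2\to\infty$ after $n\to\infty$.
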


Theorem~\ref{thm:ck} shows three different ranges for $k$ where $c(k)$ behaves differently, and is illustrated in Figure~\ref{fig:curve}. Let us explain why these three ranges occur. 
Range I contains small-degree vertices with $k\ll n^{(\tau-2)/(\tau-1)}$. In 
Section~\ref{sec:erdegrees} we show that in the configuration model these vertices are hardly involved in self-loops and multiple edges, and hence there is little difference between counting from an edge perspective or from a vertex perspective. It turns out that these vertices barely make triadic closures with hubs, which renders $c(k)$ independent of $k$ in Theorem~\ref{thm:ck}. 
Range II contains degrees that are neither small nor large with degrees 
 $ n^{(\tau-2)/(\tau-1)}\ll k\ll \sqrt{n}$. We can approximate the connection probability between vertices $i$ and $j$ with $1-\me^{-D_iD_j/\mu n}$, where $\mu=\expec[D]$. Therefore, a vertex of degree $k$ connects to vertices of degree at least $n/k$ with positive probability.
The vertices in Range II quite likely have multiple connections with vertices of degrees at least $n/k$. Thus, in this degree range, the single-edge constraint of the erased configuration model starts to play a role and causes the slow logarithmic decay of $c(k)$ in Theorem~\ref{thm:ck}. The vertices in this range turn out to be neighbors of hubs.  
Range III contains the large-degree vertices with $k\gg \sqrt{n}$.  Again we approximate the probability that vertices $i$ and $j$ are connected by $1-\me^{-D_iD_j/\mu n}$. This shows that vertices in Range III are likely to be connected to one another, possibly through multiple edges. The single-edge constraint on all connections between these core vertices causes the power-law decay of $c(k)$ in Theorem~\ref{thm:ck}.

\begin{figure}[h]
		\centering
		\begin{minipage}{0.45\linewidth}
			\centering
		\includegraphics[width=\textwidth]{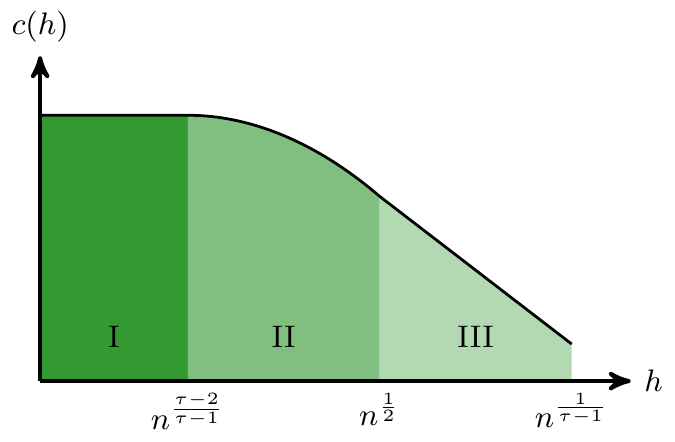}
		\caption{The three ranges of $c(k)$ defined in Theorem~\ref{thm:ck} on a log-log scale}
		\label{fig:curve}
	\end{minipage}
\hspace{1cm}
\begin{minipage}{0.45\linewidth}
	\centering
	\includegraphics[width=\textwidth]{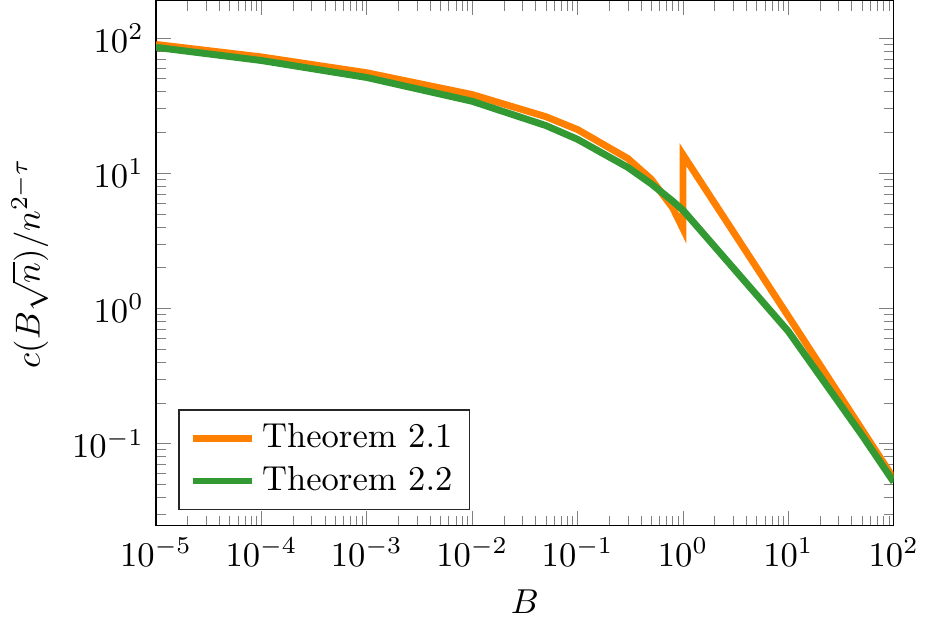}
	\caption{The normalized version of $c(k)$ for $k=B\sqrt{n}$ obtained from Theorem~\ref{thm:ck} and Theorem~\ref{thm:sqrt}.}
	\label{fig:Bsqrtn}
	\end{minipage}
\end{figure}

Observe that in Theorem ~\ref{thm:ck}  we write $\ll$ rather than $<$ for the values of $k$, because the behavior of $c(k)$  on the boundary between two different ranges may be different than the behavior inside the ranges. 
Since $k\mapsto c(k)$ is a function on a discrete domain, it is always continuous. However, we can extend the scaling limit of $k\mapsto c(k)$ to a continuous domain.
Theorem~\ref{thm:ck} then shows that the scaling limit of $k\mapsto c(k)$ is a smooth function inside the different ranges. Furthermore, filling in $k=an^{(\tau-1)/(\tau-2)}$ in Range II of Theorem~\ref{thm:ck} shows that $k\mapsto c(k)$ is also a smooth function on the boundary between Ranges I and II. However, the behavior of $k\mapsto c(k)$ on the boundary between Ranges II and III is not clear from Theorem~\ref{thm:ck}. We therefore prove the following result in Section~\ref{sec:proofthm}:

\begin{theorem}\label{thm:sqrt}
 For  $k=B\sqrt{n}$,
	\begin{equation}\label{eq:cksqrt}
	\frac{c(k)}{n^{2-\tau}}  \plim C^2\mu^{2-2\tau}B^{-2}\int_{0}^{\infty}\int_{0}^{\infty}(t_1t_2)^{-\tau}(1-\me^{-Bt_1})(1-\me^{-Bt_2})(1-\me^{-t_1t_2\mu})\dd t_1\dd t_2.
	\end{equation}
\end{theorem}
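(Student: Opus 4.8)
\emph{Proof plan.} The plan is to follow the same route as for Ranges~II and~III of Theorem~\ref{thm:ck}, carried out at the critical scale $k=B\sqrt n$, where the simplifications available in those two ranges both break down. Write $\ell_n=\sum_{i\in[n]}d_i=\mu n\,(1+\op(1))$ and recall from \eqref{c(k)-def} that $c(k)=\tfrac{2}{k(k-1)}\triangle_k/N_k$. Using the same interpretation of $c(k)$ as in Theorem~\ref{thm:ck}, I would condition on a tagged vertex $v$ having $d_v=k$ (which forces $N_k=1$ w.h.p.\ and leaves the remaining $n-1$ degrees essentially an i.i.d.\ sample from the original law), so that, writing $\sim$ for adjacency in the erased configuration model,
\[
 c(k)=\frac{2}{k(k-1)}\sum_{\{i,j\}\subseteq[n]\setminus\{v\}}\ind{v\sim i}\ind{v\sim j}\ind{i\sim j}\;(1+\op(1)).
\]
The proof then reduces to a first-moment computation for this double sum together with a variance bound showing it concentrates.

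First I would linearise the adjacency indicators. Conditionally on the degrees, the number of $\CMnd$-edges between two vertices of degrees $d,d'$ with $dd'=O(n)$ is asymptotically Poisson with mean $dd'/\ell_n$, so the erased-model adjacency probability is $1-\me^{-dd'/\mu n}+o(1)$; moreover, for degrees in the relevant window the three events $\{v\sim i\}$, $\{v\sim j\}$, $\{i\sim j\}$ are asymptotically independent. These are precisely the kind of estimates underlying the proof of Theorem~\ref{thm:ck} (Propositions~\ref{prop:major} and~\ref{prop:minor}), which I would invoke to obtain, as $n\to\infty$,
\[
 \Exp{\triangle_k\mid d_v=k}=\frac12\sum_{i\ne j}\bigl(1-\me^{-k d_i/\mu n}\bigr)\bigl(1-\me^{-k d_j/\mu n}\bigr)\bigl(1-\me^{-d_i d_j/\mu n}\bigr)\,(1+o(1)).
\]

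Next comes the evaluation of the double sum. Since $1-\me^{-x}\le x$, each summand is $O\!\left((kd_i/\mu n)(kd_j/\mu n)(d_id_j/\mu n)\right)$, so small degrees are negligible and only $d_i=\Theta(\sqrt n)$ contribute. Using \eqref{D-tail} to replace the empirical degree distribution by its power-law limit and substituting $d_i=\mu\sqrt n\,t_i$ — under which $kd_i/\mu n\to Bt_i$, $d_id_j/\mu n\to\mu t_1t_2$, and each weight $C d_i^{-\tau}\,\dd d_i$ becomes $C\mu^{1-\tau}n^{(1-\tau)/2}t_i^{-\tau}\,\dd t_i$ — and collecting the powers of $n$ gives
\[
 \Exp{\triangle_k\mid d_v=k}=\frac{C^2\mu^{2-2\tau}n^{3-\tau}}{2}\int_0^\infty\!\!\int_0^\infty (t_1t_2)^{-\tau}(1-\me^{-Bt_1})(1-\me^{-Bt_2})(1-\me^{-\mu t_1t_2})\,\dd t_1\,\dd t_2\,(1+o(1)).
\]
Dividing by $\tfrac12 k(k-1)=\tfrac12 B^2 n\,(1+o(1))$ and by $n^{2-\tau}$ yields exactly the right-hand side of \eqref{eq:cksqrt}. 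The interchange of limit and integral is legitimate because the integrand is integrable on $(0,\infty)^2$: near the origin $1-\me^{-Bt_i}\sim Bt_i$ and $1-\me^{-\mu t_1t_2}\sim\mu t_1t_2$ make it $\asymp(t_1t_2)^{2-\tau}$, integrable since $\tau<3$, while at infinity it is $\asymp(t_1t_2)^{-\tau}$, integrable since $\tau>1$; a truncation at scales $\vep\sqrt n$ and $d_{\max}$ handles the omitted light and heavy tails. For concentration, $\triangle_k$ (given $d_v=k$) is a sum of weakly dependent indicators with conditional mean of order $n^{3-\tau}\to\infty$, and a second-moment estimate parallel to the one used for Theorem~\ref{thm:ck} gives $\Var{\triangle_k\mid d_v=k}=o\bigl(\Exp{\triangle_k\mid d_v=k}^2\bigr)$, whence the stated convergence in probability.

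I expect the main obstacle to be the uniform linearisation: one must show that the product approximation for $\Prob{v\sim i,\ v\sim j,\ i\sim j}$ holds with an error that is still $\op$ of the main term \emph{after summing over all $\binom{n-1}{2}$ pairs}, precisely at $k\asymp\sqrt n$. Unlike in Ranges~II and~III, none of the three kernels $1-\me^{-kd_i/\mu n}$, $1-\me^{-kd_j/\mu n}$, $1-\me^{-d_id_j/\mu n}$ is close to $0$ or $1$ for the dominant degrees $d_i\asymp\sqrt n$, so no kernel can be linearised or saturated and the full $1-\me^{-\mu t_1t_2}$ must be retained; controlling the dependence between adjacency events that share the half-edges of $v$ or of a common neighbour is the delicate part. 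As a consistency check: letting $B\to\infty$, one has $1-\me^{-Bt_i}\to1$ while the remaining mass sits at $t_i=O(1)$ so $1-\me^{-\mu t_1t_2}\sim\mu t_1t_2$, and the integral behaves like $\mu B^{2\tau-4}\bigl(\int_0^\infty s^{1-\tau}(1-\me^{-s})\,\dd s\bigr)^2=\mu B^{2\tau-4}A^2$ (using $\int_0^\infty s^{1-\tau}(1-\me^{-s})\,\dd s=-\Gamma(2-\tau)=A$), which recovers Range~III; letting $B\to0$, the integral is $\asymp B^2\log(1/B)$, recovering the $\log(n/k^2)$ factor of Range~II.
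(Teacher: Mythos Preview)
Your proposal is correct and follows essentially the same route as the paper: the paper's own proof of Theorem~\ref{thm:sqrt} says to reuse the Range~III machinery (Lemma~\ref{lem:expk} for the product formula, Lemma~\ref{lem:condvar} for the variance, Proposition~\ref{prop:minor} for the tails), the only change being that in the analogue of Lemma~\ref{lem:convklarge} one does \emph{not} Taylor-expand $1-\me^{-D_uD_v/L_n}$, which is exactly your observation that at $k\asymp\sqrt n$ none of the three kernels can be linearised or saturated. Your truncation at scales $\vep\sqrt n$ and $d_{\max}$ is the paper's $W_n^k(\vep)$ in Range~III form, and your second-moment concentration is Lemma~\ref{lem:condvar}; the one cosmetic difference is that the paper passes through the random-measure framework (Lemmas~\ref{lem:convmeas2}--\ref{lem:convmeas}) to turn the degree sum into the limiting integral, whereas you substitute $d_i=\mu\sqrt n\,t_i$ directly and appeal to integrability, and you phrase things via a tagged vertex rather than the sum over $\{w:\Der_w=k\}$, but these are equivalent presentations of the same argument.
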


Figure~\ref{fig:Bsqrtn} compares $c(k)/n^{2-\tau}$ for $k=B\sqrt{n}$ using Theorem~\ref{thm:sqrt} and Theorem~\ref{thm:ck}. The line associated with Theorem~\ref{thm:ck} uses the result for Range II when $B<1$, and the result for Range III when $B>1$. We see that there seems to be a discontinuity between these two ranges. Figure~\ref{fig:Bsqrtn} suggests that the scaling limit of $k\mapsto c(k)$ is smooth around $k\approx \sqrt{n}$, because the lines are close for both small and large $B$-values. Theorem~\ref{theorem3} shows that indeed the scaling limit of $k\mapsto c(k)$ is smooth for $k$ of the order $\sqrt{n}$:

\begin{theorem}\label{theorem3}
The scaling limit of $k\mapsto c(k)$ is a smooth function.
\end{theorem}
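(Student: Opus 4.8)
\textbf{Proof proposal for Theorem~\ref{theorem3}.} The plan is to show that the three limiting expressions in Theorem~\ref{thm:ck}, which describe the scaling of $c(k)$ on Ranges I--III, together with the limit in Theorem~\ref{thm:sqrt} for $k$ of exact order $\sqrt{n}$, all fit together into a single smooth profile when $c(k)$ is rescaled appropriately and $k$ is parametrized on a continuous logarithmic scale. Concretely, I would introduce the rescaled degree variable through $k = n^{\alpha}$ (for $\alpha$ ranging over $(0, 1/(\tau-1))$) or, near the delicate point, through $k = B\sqrt{n}$ with $B\in(0,\infty)$, and track the exponent of $n$ and the $B$-dependent (or $k$-dependent) prefactor that $c(k)$ picks up. The claim ``the scaling limit is smooth'' then means: the function $\alpha \mapsto \big(\text{exponent of }n\text{ in }c(n^\alpha),\ \text{prefactor}\big)$ has no kinks, equivalently the log-log plot in Figure~\ref{fig:curve} is differentiable at the two transition points $\alpha = (\tau-2)/(\tau-1)$ and $\alpha = 1/2$.

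The key steps, in order, are as follows. First, I would verify the boundary between Ranges I and II: substituting $k = a\,n^{(\tau-2)/(\tau-1)}$ into the Range II formula gives $\log(n/k^2) = \log(n^{1 - 2(\tau-2)/(\tau-1)}/a^2) \sim \tfrac{3-\tau}{\tau-1}\log n$, so the Range II expression reduces continuously to the Range I expression; moreover the derivative in $\alpha$ matches because both ranges carry the same power $n^{2-\tau}$ and the logarithmic factor varies smoothly. This is already sketched in the text and only needs to be made precise. Second, and this is the heart of the matter, I would analyze the Range II/III boundary near $k = B\sqrt{n}$ using Theorem~\ref{thm:sqrt}. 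Here the integral
\begin{equation}
	I(B) = \int_{0}^{\infty}\!\!\int_{0}^{\infty}(t_1t_2)^{-\tau}(1-\e^{-Bt_1})(1-\e^{-Bt_2})(1-\e^{-t_1t_2\mu})\,\dd t_1\,\dd t_2
\end{equation}
must be shown to interpolate smoothly between its two asymptotic regimes: as $B\downarrow 0$ one should recover $I(B)\sim c_1 B^{2}\log(1/B)$ (matching Range II, whose prefactor $c(k)/n^{2-\tau}$ behaves like $B^{-2}\cdot B^2\log(1/B^2)$ after the substitution, i.e.\ $\log(n/k^2)=\log(1/B^2)$), and as $B\to\infty$ one should recover $I(B)\sim c_2 B^{2\tau-4}$ (matching Range III, where $c(k)/n^{2-\tau}$ scales like $n^{5-2\tau}k^{2\tau-6}/n^{2-\tau} = B^{2\tau-6}n^{0}\cdot$constant, up to bookkeeping of the $\sqrt{n}$ powers). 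The point is that $B\mapsto B^{-2}I(B)$ is real-analytic on $(0,\infty)$ — this follows by dominated convergence and differentiation under the integral sign, since the integrand and all its $B$-derivatives are controlled by integrable majorants (the factors $(1-\e^{-Bt})$ and their derivatives are bounded by $\min(1,Bt)$ and by $t\,\e^{-Bt}$-type bounds, taming both the $t\to 0$ singularity $t^{-\tau}$ with $\tau<3$ and the $t\to\infty$ tail). Hence there is no kink \emph{within} the $\sqrt{n}$-window, and the only thing left is to check that the one-sided limits $B\downarrow 0$ and $B\to\infty$ of this analytic function, together with its first derivative, agree with the one-sided scaling limits coming from Ranges II and III respectively; that reduces to the two asymptotic estimates on $I(B)$ just stated, which are routine Laplace/Tauberian-type computations on the double integral.

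The main obstacle I anticipate is the careful asymptotic analysis of $I(B)$ as $B\downarrow 0$, specifically extracting the $B^2\log(1/B)$ behavior with the correct constant and showing that the derivative $\tfrac{\dd}{\dd B}\big(B^{-2}I(B)\big)$ has a matching one-sided limit. The logarithm arises from a marginal divergence: for small $B$ the integral is dominated by the region $t_1, t_2 \asymp 1/B$ where $(1-\e^{-Bt_i})\approx 1$ but $(1-\e^{-t_1t_2\mu})\approx 1$ as well, and the integral $\int\!\int (t_1t_2)^{-\tau}\dd t_1\dd t_2$ over $\{t_1t_2 \gtrsim 1, \ t_i \lesssim 1/B\}$ produces $\log(1/B)$ when $\tau$ sits exactly at the value making the integrand scale-invariant in the product $t_1t_2$ — one has to be precise about which subregion contributes the log and which contribute only to the $O(B^2)$ part. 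A clean way to handle this is the change of variables $(u,v) = (t_1 t_2,\ t_1/t_2)$, after which the $v$-integral is elementary and the $u$-integral exhibits the logarithm transparently; differentiating in $B$ then commutes with these manipulations. Once $I(B)$ and $I'(B)$ are pinned down at both ends, smoothness across all of $(0,\infty)$ in the continuous parameter — and hence Theorem~\ref{theorem3} — follows by gluing the three analytic pieces (Range I: constant; Range II: a log-linear function; the $\sqrt{n}$-window: $B^{-2}I(B)$; Range III: a power law) and invoking the matching of values and derivatives at the two junctions.
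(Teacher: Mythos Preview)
Your proposal is correct and follows essentially the same route as the paper: check the I/II junction by substituting $k=an^{(\tau-2)/(\tau-1)}$ into the Range~II formula, and handle the II/III junction by analyzing the asymptotics of the double integral from Theorem~\ref{thm:sqrt} as $B\to 0$ (Taylor-expanding the factors $1-\me^{-Bt_i}$ to recover the $\log(1/B^2)$ of Range~II) and as $B\to\infty$ (after rescaling, Taylor-expanding $1-\me^{-t_1t_2\mu/B^2}$ to recover the $B^{2\tau-6}$ of Range~III). The paper's proof is less explicit than yours about interior analyticity of $B\mapsto B^{-2}I(B)$ and about matching derivatives at the junctions---it is content to match leading-order expressions---so your added care there is a mild strengthening rather than a different argument.
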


\paragraph{Most likely configurations.}
	 The three different ranges in Theorem~\ref{thm:ck} result from a canonical trade-off caused by the power-law degree distribution. On the one hand, high-degree vertices participate in many triangles. In Section~\ref{sec:expmajor} we show that the probability that a triangle is present between vertices with degrees $k, D_u$ and $D_v$ can be approximated by 
	 \begin{equation}\label{eq:probtriangest}
		 \left(1-\me^{kD_u/\mu n}\right) \left(1-\me^{kD_v/\mu n}\right) \left(1-\me^{D_uD_v/\mu n}\right).
	 \end{equation}
	 The probability of this triangle thus increases with $D_u$ and $D_v$. On the other hand, in power-law distribution high degrees are rare. This creates a trade-off between the occurrence of triangles between $\{k, D_u,D_v \}$-triplets and the number of them. Surely, 
large degrees $D_u$ and $D_v$ make a triangle more likely, but larger degrees are less likely to occur. Since~\eqref{eq:probtriangest} increases only slowly in $D_u$ and $D_v$ as soon as $D_u,D_v\gg \mu n/k$ or when $D_uD_v\gg \mu n$, intuitively, triangles with $D_u,D_v\gg \mu n/k$ or with $D_uD_v\gg \mu n$ only marginally increase the number of triangles. 
	 In fact, we will show that most triangles with a vertex of degree $k$ contain two other vertices of very specific degrees, those degrees that can are aligned with the trade-off. The typical degrees of $D_u$ and $D_v$ in a triangle with a vertex of degree $k$ are given by $D_u,D_v\approx \mu n/k$ or by $D_uD_v\approx \mu n$. 
	 
	Let us now formalize this reasoning. Introduce
	\begin{equation}\label{eq:wkn}
	W_n^k(\varepsilon) = \begin{cases}
	(u,v): D_uD_v\in[\varepsilon,1/\varepsilon]\mu n & \text{for }k\ll n^{(\tau-2)/(\tau-1)},\\
	(u,v): D_uD_v\in[\varepsilon,1/\varepsilon]\mu n,  D_u,D_v<\mu n/(k\varepsilon)&\text{for } a n^{(\tau-2)/(\tau-1)}\leq k \ll\sqrt{n},\\
	(u,v): D_u,D_v\in [\varepsilon,1/\varepsilon]\mu n/k & \text{for } k\gg\sqrt{n},
	\end{cases}
	\end{equation}
	for some $a>0$. Denote the number of triangles between one vertex of degree $k$ and two other vertices $i,j$ with $(i,j)\in W_n^k(0)$ by $\triangle_k(W_n^k(\varepsilon))$. The next theorem shows that these types of triangles dominate all other triangles where one vertex has degree $k$:
	
	\begin{theorem}\label{thm:typetriang}
		Let $G$ be an erased configuration model where the degrees are an i.i.d.\ sample from a power-law distribution with exponent $\tau\in(2,3)$. Then, for $\varepsilon_n\to 0$ sufficiently slowly,
		\begin{equation}
			\frac{\triangle_k(W_n^k(\varepsilon_n))}{\triangle_k}\plim 1.
		\end{equation}
	\end{theorem}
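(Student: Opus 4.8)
The plan is to compare $\triangle_k$ with $\triangle_k(W_n^k(\varepsilon_n))$ via first and second moment estimates built on the triangle-probability approximation~\eqref{eq:probtriangest}. Write $\triangle_k=\triangle_k(W_n^k(\varepsilon))+R_k(\varepsilon)$, where $R_k(\varepsilon)\ge 0$ counts triangles on a degree-$k$ vertex whose two other vertices form a degree pair \emph{outside} the window in~\eqref{eq:wkn}. It suffices to exhibit a deterministic scale $a_n(k)$ — the scale of $\tfrac12 N_k k(k-1)c(k)$ predicted by Theorem~\ref{thm:ck} — with (a) $\triangle_k(W_n^k(\varepsilon_n))\ge\tfrac12 a_n(k)$ w.h.p., and (b) $\Exp{R_k(\varepsilon_n)}=o(a_n(k))$. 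Given these, Markov's inequality applied to the nonnegative $R_k(\varepsilon_n)$ gives $R_k(\varepsilon_n)/a_n(k)\plim 0$, and combining with (a) yields $R_k(\varepsilon_n)/\triangle_k(W_n^k(\varepsilon_n))\plim 0$, which is equivalent to the claim. The reduction is effectively the statement that Propositions~\ref{prop:major} and~\ref{prop:minor} together imply the theorem, plus a diagonalisation in $\varepsilon$.

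For step (b) I would first bound $\Exp{R_k(\varepsilon)}$ for \emph{fixed} $\varepsilon>0$, showing $\limsup_n \Exp{R_k(\varepsilon)}/a_n(k)\to 0$ as $\varepsilon\downarrow 0$, and only afterwards let $\varepsilon=\varepsilon_n$ decay slowly. Conditioning on the degree sequence $\bfdit$, $\Exp{R_k(\varepsilon)\mid\bfdit}$ is a sum over ordered triples $(w,u,v)$ with $d_w=k$ and $(u,v)\notin W_n^k(\varepsilon)$ of the probability of a triangle on $\{w,u,v\}$ in the erased model; using~\eqref{eq:probtriangest} together with $1-\me^{-x}\le\min(1,x)$ and an explicit bound on the error of the approximation, this sum is comparable to an integral against the empirical degree distribution, which by the law of large numbers converges to $\mu^{-3}$ times a triple integral of $(t_1t_2)^{-\tau}$ against the three saturation factors over the excluded region, plus the contribution of degrees near $d_{\max}$. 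The region split is exactly the one motivating~\eqref{eq:wkn}: pairs with $D_uD_v\ll\varepsilon\mu n$ contribute little because $1-\me^{-D_uD_v/\mu n}\approx D_uD_v/\mu n$ suppresses them; pairs with $D_uD_v\gg\mu n/\varepsilon$ (resp.\ $D_u,D_v\gg\mu n/(k\varepsilon)$ in Range II, resp.\ the analogue in Range III) contribute little because the $(t_1t_2)^{-\tau}$ tail is integrable once the saturation factors are $\approx 1$ — here $\tau>2$ is the crucial input. Letting $\varepsilon\to 0$, the excluded integral vanishes, and the surviving piece reproduces the constants in Theorem~\ref{thm:ck} and Theorem~\ref{thm:sqrt}.

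For step (a) a second-moment computation on $\triangle_k(W_n^k(\varepsilon))$ suffices, and it is cleaner than for the full $\triangle_k$ precisely because the relevant degrees are now bounded polynomially below $n$: one bounds $\Var(\triangle_k(W_n^k(\varepsilon))\mid\bfdit)$ by splitting the covariance of two triangle-indicators according to how many vertices/half-edges they share, again via the erased-edge probability estimates, obtains $\Var(\,\cdot\mid\bfdit)=o(\Exp{\,\cdot\mid\bfdit}^2)$, and then folds in the $\op$-fluctuations of $N_k$ and of the empirical degree distribution. This is the content of Proposition~\ref{prop:major}, with Proposition~\ref{prop:minor} supplying (b). The theorem then follows by assembling the two and diagonalising: the error terms in (a) and (b) have the form $\op(1)\,g(\varepsilon)+h_n(\varepsilon)$ with $g(\varepsilon)\to 0$ and, for each fixed $\varepsilon$, $h_n(\varepsilon)\to 0$, so any $\varepsilon_n\to 0$ slow enough that $h_n(\varepsilon_n)\to 0$ works.

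The main obstacle is the passage from the true triangle probability in $\CMnd$ to the product form~\eqref{eq:probtriangest}: edge occupancies are negatively correlated and the erasing constraint couples the three potential edges, so one must control the error in~\eqref{eq:probtriangest} uniformly over all triples, including those with $D_u,D_v$ or $k$ near $d_{\max}=\bigOp{n^{1/(\tau-1)}}$, the regime where the multigraph carries many multi-edges and the edge-perspective count explodes. Showing that the atypical contributions are genuinely of \emph{lower} order (not merely comparable) is where the real work lies; it is handled by path-counting/switching estimates for the number of multi-edges, together with the structural fact that $1-\me^{-D_uD_v/\mu n}\le 1$ automatically — precisely the saturation mechanism by which the vertex-perspective count tames the hubs.
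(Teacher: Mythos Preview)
Your proposal is correct and follows essentially the same route as the paper: split $\triangle_k$ (equivalently $c(k)$) into the contribution from $W_n^k(\varepsilon)$ and its complement, use a second-moment argument to show the in-window part concentrates at the scale $f(k,n)$ (Proposition~\ref{prop:major}), use a first-moment/Markov bound with $1-\me^{-x}\le\min(1,x)$ to show the out-of-window part is $O_\prob(\varepsilon^\kappa f(k,n))$ (Proposition~\ref{prop:minor}), and then diagonalise by sending $\varepsilon=\varepsilon_n\to 0$ slowly. The technical obstacle you flag---replacing the true triangle probability by the product~\eqref{eq:probtriangest}---is handled in the paper by invoking \cite[Lemma~3.1]{hofstad2017d} for the joint non-connection probabilities inside the window, while outside the window only the crude upper bound $\Probn{\hat X_{ij}=1}\le\min(1,D_iD_j/L_n)$ is needed, so no uniform control near $d_{\max}$ is required there.
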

	Figure~\ref{fig:major} illustrates the typical triangles containing a vertex of degree $k$ as given by Theorem~\ref{thm:typetriang}. When $k$ is small ($k$ in Range I or II), a typical triangle containing a vertex of degree $k$ is a triangle with vertices $u$ and $v$ such that $D_uD_v=\Theta(n)$ as shown in in Figure~\ref{fig:contsmall}. Then, the probability that an edge between $u$ and $v$ exists is asymptotically positive and non-trivial. Since $k$ is small, the probability that an edge exists between a vertex of degree $k$ and $u$ or $v$ is small. On the other hand, when $k$ is larger (in Range III), a typical triangle containing a vertex of degree $k$ is with vertices $u$ and $v$ such that $D_u=\Theta(n/k)$ and $D_v=\Theta(n/k)$. Then, the probability that an edge exists between $k$ and $D_u$ or $k$ and $D_v$  is asymptotically positive whereas the probability that an edge exists between vertices $u$ and $v$ vanishes. Figure~\ref{fig:contlarge} shows this typical triangle. 
	\begin{figure}[tb]
		\centering
		\begin{subfigure}{0.45\linewidth}
			\centering
			\includegraphics[width=0.7\textwidth]{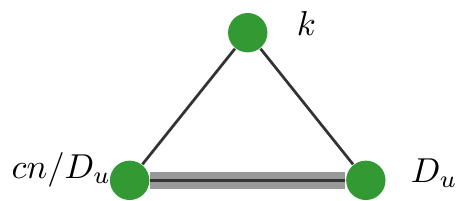}
			\caption{$k<\sqrt{n}$}
			\label{fig:contsmall}
		\end{subfigure}
		\begin{subfigure}{0.45\linewidth}
			\centering
			\includegraphics[width=0.7\textwidth]{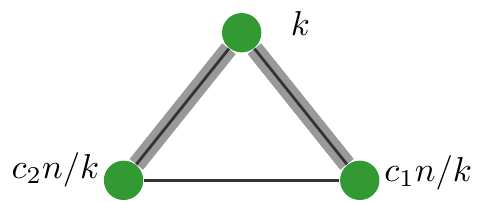}
			\caption{$k>\sqrt{n}$}
			\label{fig:contlarge}
		\end{subfigure}
		\caption{The major contributions in the different ranges for $k$. The highlighted edges are present with asymptotically positive probability.}
		\label{fig:major}
	\end{figure}
	
	Figure~\ref{fig:contplot} shows the typical size of the degrees of other vertices in a triangle with a vertex of degree $k=n^\beta$. We see that when $\beta<(\tau-2)/(\tau-1)$ (so that $k$ is in Range I), the typical other degrees are independent of the exact value of $k$. This shows why $c(k)$ is independent of $k$ in Range I in Theorem~\ref{thm:ck}. When $(\tau-2)/(\tau-1)<\beta<\tfrac12$, we see that the range of possible degrees for vertices $u$ and $v$ decreases when $k$ gets larger. Still, the range of possible degrees for $D_u$ and $D_v$ is quite wide. This explains the mild dependence of $c(k)$ on $k$ in Theorem~\ref{thm:ck} in Range II. When $\beta>\tfrac12$, $k$ is in Range III. Then the typical values of $D_u$ and $D_v$ are considerably different from those in the previous regime. 
	The values that $D_u$ and $D_v$ can take depend heavily on the value of $k$. This explains the dependence of $c(k)$ on $k$ in Range III.
	\begin{figure}[tb]
		\centering
		\includegraphics[width=0.4\textwidth]{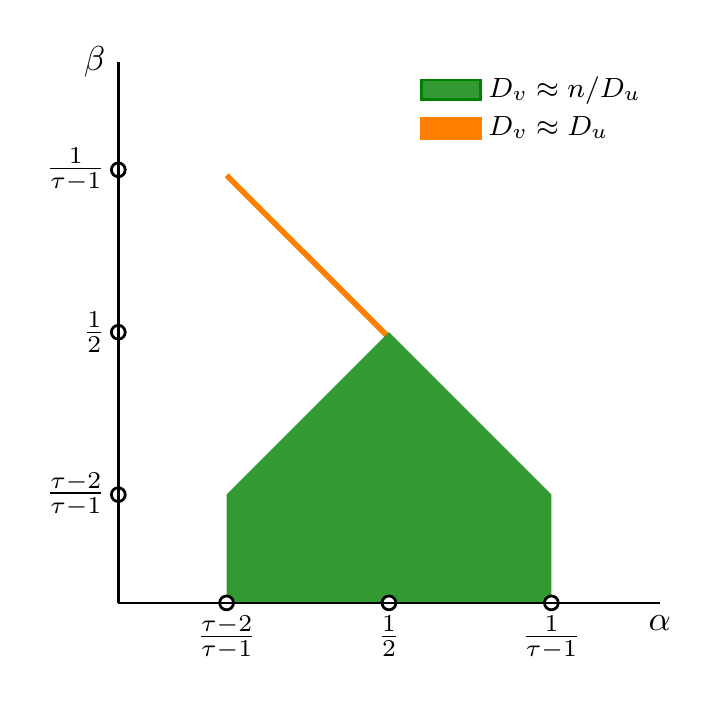}
		\caption{Visualization of the contributing degrees when $k=n^{\beta}$ and $D_u=n^{\alpha}$. The colored area shows the values of $\alpha$ that contribute to $c(n^\beta)$.}
		\label{fig:contplot}
	\end{figure}
	
	\paragraph{Global and local clustering.}
	The global clustering coefficient divides the total number of triangles by the total number of pairs of neighbors of all vertices. 
	In~\cite{hofstad2017d}, we have shown that the total number of triangles in the configuration model from a vertex perspective is determined by vertices of degree proportional to $\sqrt{n}$. Thus, only triangles between vertices on the border between Ranges II and III contribute to the global clustering coefficient. The local clustering coefficient counts all triangles where one vertex has degree $k$ and provides a more complete picture of clustering from a vertex perspective, since it covers more types of triangles. 
%
	
	\paragraph{Hidden-variable models.}
	Our results for clustering in the erased configuration model agree with recent results for the hidden-variable model~\cite{stegehuis2017}. In the hidden-variable model, every vertex is equipped with a hidden variable $w_i$, where the hidden variables are sampled from a power-law distribution. Then, vertices $i$ and $j$ are connected with probability $\min(w_iw_j/n,1)$~\cite{chung2002,boguna2003}. In the erased configuration model, we will use that the probability that a vertex with degree $D_i$ is connected to a vertex with degree $D_j$ can be approximated by
	\begin{equation}
	1-\me^{-D_iD_j/\mu n},
	\end{equation}
which behaves similarly as $\min(D_iD_j/n,1)$. Thus, the connection probabilities in the erased configuration model can be interpreted as the connection probabilities in the hidden-variable model, where the sampled degrees can be interpreted as the hidden variables.
	The major difference is that connections in the hidden-variable model are {\em independent} once the hidden variables are sampled, whereas connections in the erased configuration model are correlated once the degrees are sampled. Indeed, in the erased configuration model we know that a vertex with degree $D_i$ has at most $D_i$ other vertices as a neighbor, so that the connections from vertex $i$ to other vertices are correlated. Still, our results show that these correlations are small enough for the results for $c(k)$ to be similar to the results for $c(k)$ in the hidden variable model.
	
	\subsection{Overview of the proof}
To prove Theorem~\ref{thm:ck}, we show that there is a major contributing regime for $c(k)$, which characterizes the degrees of the other two vertices in a typical triangle with a vertex of degree $k$. We write this major contributing regime as $W_n^k(\varepsilon)$ defined in~\eqref{eq:wkn}. 
The number of triangles adjacent to a vertex of degree $k$ is dominated by triangles between the vertex of degree $k$ and other vertices with degrees in a specific regime, depending on $k$. All three ranges of $k$ have a different spectrum of degrees that contribute to the number of triangles. 
We write
	\begin{equation}
	c(k)= c(k,W_n^k(\varepsilon))+c(k,\bar{W}_n^k(\varepsilon)),
	\end{equation}
where $c(k,\bar{W}_n(\varepsilon))$ denotes the contribution to $c(k)$ from triangles where the other two vertices $u$ and $v$ satisfy $(D_u,D_v)\notin W_n^k(\varepsilon)$.
Furthermore, we will write the order of magnitude of the value of $c(k)$ as $f(k,n)$. Theorem~\ref{thm:ck} states that this order should be
	\begin{equation}\label{eq:fkn}
	f(k,n) = 
	\begin{cases}
	n^{2-\tau}\log(n)  & 	 \text{for }k\ll n^{(\tau-2)/(\tau-1)},\\
	n^{2-\tau}\log(n/k^2) & 	 \text{for } an^{(\tau-2)/(\tau-1)}\leq k\ll \sqrt{n},\\
	n^{5-2\tau}k^{2\tau-6} & \text{for } k\gg\sqrt{n},
	\end{cases}
	\end{equation}	
	for some $a>0$.
The proof of Theorem~\ref{thm:ck} is largely built on the following two propositions:

\begin{proposition}[Main contribution]\label{prop:major}
	\begin{equation}
	\frac{c(k,W_n^k(\varepsilon))}{f(n,k)} \plim \begin{cases}
	C^2\int_{\varepsilon}^{1/\varepsilon}t^{1-\tau}(1-\me^{-\tau})\dd t & k\ll \sqrt{n},\\
	C^2\left(\int_{\varepsilon}^{1/\varepsilon}t^{1-\tau}(1-\me^{-\tau})\dd t \right)^2& k\gg \sqrt{n}.\\
	\end{cases}
	\end{equation}
\end{proposition}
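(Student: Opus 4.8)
The plan is a first\nobreakspace and second\nobreakspace-moment analysis of $\triangle_k\big(W_n^k(\varepsilon)\big)$, the number of triangles through a vertex of degree $k$ whose two other vertices have degrees in the band of \eqref{eq:wkn}; by \eqref{c(k)-def} it suffices to identify the limit of $\tfrac{2}{k(k-1)}\triangle_k\big(W_n^k(\varepsilon)\big)/N_k$. I would first condition on the degree sequence $\bfdit$ and work on a high\nobreakspace-probability event $\mathcal G_n$ on which $d_{\max}=\bigOp{n^{1/(\tau-1)}}$, $\sum_i d_i=\mu n(1+\op(1))$, and the empirical degree counts agree with $n\,\prob(D=x)$ aggregated over each dyadic scale. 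Using exchangeability of the vertices of degree $\neq k$ and the triangle\nobreakspace-probability estimate of Section~\ref{sec:expmajor} (cf.\ \eqref{eq:probtriangest}) --- that vertices of degrees $k,a,b$ form a triangle with probability $(1-\me^{-ka/\mu n})(1-\me^{-kb/\mu n})(1-\me^{-ab/\mu n})(1+o(1))$ uniformly over the relevant band --- this reduces the question to the explicit expectation
\[
\frac{\expec\!\big[\triangle_k(W_n^k(\varepsilon))\mid\bfdit\big]}{N_k}=\binom{n}{2}\,\expec\!\Big[(1-\me^{-kD_u/\mu n})(1-\me^{-kD_v/\mu n})(1-\me^{-D_uD_v/\mu n})\,\indic{(D_u,D_v)\in W_n^k(\varepsilon)}\Big]\,(1+\op(1)),
\]
where $D_u,D_v$ are independent copies of $D$.

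Next I would evaluate this expectation by substituting $\prob(D=x)=Cx^{-\tau}(1+o(1))$ and passing to integrals. The three cases of \eqref{eq:wkn} correspond to which factors linearise: when $k\ll\sqrt n$ one has $ka,kb\ll\mu n$ on the band --- this uses $d_{\max}=\bigOp{n^{1/(\tau-1)}}$ in Range~I and the constraint $D_u,D_v<\mu n/(k\varepsilon)$ in Range~II --- so the first two factors become $k^2ab/(\mu^2n^2)$, while when $k\gg\sqrt n$ one has $ab=\Theta((\mu n/k)^2)\ll\mu n$, so the third factor becomes $ab/(\mu n)$. For $k\ll\sqrt n$ I would change variables to $(a,s)$ with $s=ab/\mu n\in[\varepsilon,1/\varepsilon]$: the integral over the free variable $a$ is $\int\dd a/a$ over a set of logarithmic width $\log n$ in Range~I and $\log(n/k^2)$ in Range~II, which is precisely the logarithm appearing in $f(n,k)$ of \eqref{eq:fkn}, and the remaining integral is $C^2\int_{\varepsilon}^{1/\varepsilon}t^{1-\tau}(1-\me^{-t})\,\dd t$. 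For $k\gg\sqrt n$ the substitution $t_i=kD_i/\mu n$ decouples the two vertices and produces the square $C^2\big(\int_{\varepsilon}^{1/\varepsilon}t^{1-\tau}(1-\me^{-t})\,\dd t\big)^2$, with the residual powers of $n$ and $k$ assembling into $f(n,k)=n^{5-2\tau}k^{2\tau-6}$. Dominated convergence then gives convergence of the integrals to the claimed constants (and, consistently with Theorem~\ref{thm:ck}, letting $\varepsilon\to0$ gives $\int_0^\infty t^{1-\tau}(1-\me^{-t})\,\dd t=-\Gamma(2-\tau)=A$, finite exactly because $\tau\in(2,3)$).

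To promote the conditional expectation to convergence in probability I would bound $\Var{\triangle_k(W_n^k(\varepsilon))\mid\bfdit}$ on $\mathcal G_n$ by splitting the covariance of two triangle indicators according to the overlap of the two triangles (disjoint, sharing one vertex, sharing one edge). Edge\nobreakspace-disjoint pairs give a lower\nobreakspace-order term via near\nobreakspace-independence of the pairing on disjoint half\nobreakspace-edge sets; the delicate case is two triangles sharing a single vertex $w$, which are correlated through the degree of $w$, and this is exactly where the band structure of $W_n^k(\varepsilon)$ enters, since it bounds the degrees of all vertices in a counted triangle and hence caps the number of triangles through $w$. One obtains $\Var{\triangle_k(W_n^k(\varepsilon))\mid\bfdit}=o\big(\expec[\triangle_k(W_n^k(\varepsilon))\mid\bfdit]^2\big)$ on $\mathcal G_n$, and Chebyshev finishes the proof on $\{N_k\geq1\}$ (note that for $k$ near $d_{\max}$, $N_k$ itself need not concentrate, but the number of counted triangles through a single degree\nobreakspace-$k$ vertex tends to infinity throughout the three ranges, so the per\nobreakspace-vertex count concentrates, which is what the argument actually needs).

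The main obstacle is this concentration step. It is not automatic because of a structural feature: the \emph{unrestricted} first moment of the triangle count through a degree\nobreakspace-$k$ vertex is dominated by atypical degree configurations --- a single vertex of degree $\gg n^{1/(\tau-1)}$ already makes it diverge --- so both the restriction to $W_n^k(\varepsilon)$ and the truncation of $d_{\max}$ on $\mathcal G_n$ are genuinely needed, and a naive second\nobreakspace-moment estimate does not suffice. Making the covariance bound for triangles sharing a high\nobreakspace-degree vertex quantitative, and doing so uniformly over the three ranges of $k$ (including near the transitions $k\approx n^{(\tau-2)/(\tau-1)}$ and $k\approx\sqrt n$), is the crux.
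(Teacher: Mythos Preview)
Your plan is essentially the paper's own proof: condition on the degree sequence, use the product formula \eqref{eq:probtriangest} for the triangle probability to compute $\expec_n[c(k,W_n^k(\varepsilon))]$ (Lemma~\ref{lem:expk}), show this conditional expectation converges (Lemmas~\ref{lem:convksmall}--\ref{lem:convklarge}), and then bound $\Varn{c(k,W_n^k(\varepsilon))}$ by splitting over the overlap pattern of two triangles (Lemma~\ref{lem:condvar}). Two small corrections: first, the displayed identity replacing the conditional sum $\sum_{u,v}g_n(k,D_u,D_v)\indic{W_n^k(\varepsilon)}$ by $\binom{n}{2}$ times an \emph{unconditional} expectation over i.i.d.\ $D_u,D_v$ is itself the non-trivial step --- the paper carries it out via convergence of suitably rescaled empirical measures $\Mn$ (Lemmas~\ref{lem:convmeas2}--\ref{lem:convmeas}), and in Range~II this requires a further three-region split of the band (Figure~\ref{fig:contpart}); second, in the variance the delicate case is two triangles sharing an \emph{edge} (the case $|\{i,j,u,v,w,z\}|=4$), not merely a vertex: once degrees are fixed, triangle indicators on edge-disjoint triples are asymptotically uncorrelated, so your ``single shared vertex'' case is in fact easy.
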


\begin{proposition}[Minor contributions]\label{prop:minor}
	There exists $\kappa>0$ such that for all ranges
	\begin{equation}
	\limsup_{n\to\infty}\frac{\Expn{c(k,\bar{W}_n^k(\varepsilon))}}{f(n,k)} \plim \bigOp{\varepsilon^\kappa}.
	\end{equation}
\end{proposition}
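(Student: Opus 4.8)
The plan is a first–moment estimate: bound the quenched expected number of ``atypical'' triangles and show that it is a vanishing (as $\varepsilon\to 0$) fraction of $f(n,k)$.

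First I would reduce the statement to a deterministic sum over degree classes. By the estimates of Section~\ref{sec:expmajor} underlying~\eqref{eq:probtriangest}, conditionally on the degrees the probability that two vertices $u,v$ together with a fixed vertex $w$ with $d_w=k$ span a triangle in the erased configuration model is at most a universal constant times $\bar p_k(D_u)\bar p_k(D_v)\bar p(D_u,D_v)$, where $\bar p_k(a):=\min\{1,ka/(\mu n)\}$ and $\bar p(a,b):=\min\{1,ab/(\mu n)\}$; this uses that the ECM has no more edges than the CM, that $\Prob{X_{xy}\ge1}\le\Exp{X_{xy}}=d_xd_y/(\sum_i d_i-1)$ with $\sum_i d_i=\mu n(1+\op(1))$, and the negative association of the uniform matching for the product form. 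Summing over $w$ and over pairs $\{u,v\}$ and dividing as in~\eqref{c(k)-def}, then taking unconditional expectations (the degrees being i.i.d., each pair contributes $\Exp{\bar p_k(D)\bar p_k(D')\bar p(D,D')\indic{(D,D')\notin W_n^k(\varepsilon)}}$ with $D,D'$ independent, and $\Prob{D=a}\le C'a^{-\tau}$ for all $a$ by~\eqref{D-tail}; note the minima already truncate all the sums, so no cut–off at $d_{\max}$ is needed), Markov's inequality reduces the claim — and accounts for the $\bigOp{\cdot}$ and for taking $\limsup_n$ — to the \emph{deterministic} bound
\begin{equation*}
\frac{n^2}{k^2}\sum_{a,b\ge1}(ab)^{-\tau}\,\bar p_k(a)\,\bar p_k(b)\,\bar p(a,b)\,\indic{(a,b)\notin W_n^k(\varepsilon)}\ =\ \bigO{\varepsilon^{\kappa}f(n,k)}\qquad(\kappa>0),
\end{equation*}
uniformly in $n$ and in $k$ inside each of the three ranges.

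Next I would split the summation region $\bar W_n^k(\varepsilon)$ into a bounded number of rectangular blocks, declaring each of $a,b$ to lie below, inside or above its window (and, in Ranges~I--II, the product $ab$ to lie below $\varepsilon\mu n$, inside $[\varepsilon,1/\varepsilon]\mu n$, or above $\mu n/\varepsilon$, with the additional dichotomy $a\vee b\lessgtr\mu n/(k\varepsilon)$ in Range~II). On each block every factor $\bar p_k(a),\bar p_k(b),\bar p(a,b)$ sits on a definite branch of its minimum, so the restricted sum factorises into one–dimensional sums of the type $\sum_{x\in I}x^{-\gamma}$, possibly carrying a logarithmic weight; these are elementary, governed by the endpoint of $I$ nearer $1$ when $\gamma>1$ and by the larger endpoint when $\gamma<1$, a logarithm appearing exactly when $\gamma=1$ — which is the source of the $\log n$ in Range~I and of the $\log(n/k^2)$ in Ranges~II--III. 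Block by block one finds a contribution that is either $o(f(n,k))$ uniformly, or $\bigO{\varepsilon^{c}f(n,k)}$ with $c>0$; the exponents that arise are $\tau-2$, $3-\tau$ and (larger) multiples thereof, so one may take $\kappa=\min\{\tau-2,3-\tau\}>0$.

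I expect the main difficulty to lie entirely in the bookkeeping of this last step. A block yields a power of $\varepsilon$ with the \emph{correct}, positive exponent only when every constraint defining $W_n^k(\varepsilon)$ is retained and the correct branch of each minimum is used: replacing $\bar p_k(a)$ by $1$ rather than by $\min\{1,ka/(\mu n)\}$ on a block where $a$ ranges over all large degrees turns a harmless $\varepsilon^{3-\tau}$ into a divergent $\varepsilon^{5-2\tau}$, and failing to keep $a,b$ on the linear range $\lesssim n/k$ of $\bar p_k$ in Range~II replaces the intended $\log(n/k^2)$ by a spurious $\log n$. The block to watch is ``large degree $\times$ large degree'' in Range~III, where all three connection probabilities may saturate; but this happens precisely on the sub–range $k\le\sqrt{\mu n}/\varepsilon$, on which the otherwise dangerous factor $k^2/n$ is bounded by $\mu/\varepsilon^2$, so the block is controlled by $\bigO{\varepsilon^{2(\tau-2)}f(n,k)}$. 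Tracking this saturation–versus–constraint interplay uniformly across the three ranges and through the crossover at $k\approx\sqrt n$ is the one place needing genuine care; everything else is the elementary power–counting above, carried out uniformly in $k$ and $n$.
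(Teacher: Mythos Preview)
Your proposal is correct and follows essentially the same approach as the paper's proof (Lemma~\ref{lem:bexp} in Section~\ref{sec:minor}): bound the triangle probability by the product $\min(1,kD_u/\mu n)\min(1,kD_v/\mu n)\min(1,D_uD_v/\mu n)$, pass from the quenched expectation to a deterministic integral via the power-law density and Markov's inequality (the paper's Lemma~\ref{lem:conddeg}), and split $\bar W_n^k(\varepsilon)$ according to which branch of each minimum is active, treating the three ranges of $k$ separately. The paper writes out each resulting integral explicitly rather than arguing abstractly about ``blocks'', and justifies the product bound via sequential conditioning in the matching rather than negative association, but the substance and the resulting exponents (your $\kappa=\min\{\tau-2,3-\tau\}$) match.
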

	We now show how these propositions prove Theorem~\ref{thm:ck}. Applying Proposition~\ref{prop:minor} together with the Markov inequality yields 
	\begin{equation}
	\Prob{c(k,\bar{W}_n^k(\varepsilon))>Kf(k,n)\varepsilon^\kappa} = \bigO{K^{-1}}.
	\end{equation}
	Therefore,
	\begin{equation}
	c(k)=c(k,W_n^k(\varepsilon))+O_\prob\left(f(k,n)\varepsilon^\kappa\right).
	\end{equation}
	Taking the limit of $\varepsilon\to 0$ then already proves Theorem~\ref{thm:typetriang}. 
	To prove Theorems~\ref{thm:ck} and~\ref{thm:sqrt} we use Proposition~\ref{prop:major}, which shows that
	\begin{equation}
	\frac{c(k,W_n^k(\varepsilon))}{f(k,n)}\plim 
	\begin{cases}
	C^2\int_\varepsilon^{1/\varepsilon}t^{1-\tau}(1-\me^{-t})\dd t+O(\varepsilon^\kappa) &k \ll\sqrt{n},\\
	C^2\left(\int_\varepsilon^{1/\varepsilon}t^{1-\tau}(1-\me^{-t})\dd t\right)^2+O(\varepsilon^\kappa) & k\gg\sqrt{n}.
	\end{cases}
	\end{equation}
	We take the limit of $\varepsilon\to 0 $ and use that
	\begin{equation}
	\begin{aligned}[b]
	\int_{0}^{\infty}x^{1-\tau}(1-\me^{-x})\dd x & = \int_{0}^{\infty}\int_{0}^{x}x^{1-\tau}\me^{-y}\dd y \dd x =  \int_{0}^{\infty}\int_{y}^{\infty}x^{1-\tau}\me^{-y}\dd x \dd y\\
	& = -\frac{1}{2-\tau}\int_{0}^{\infty}y^{2-\tau}\me^{-y}\dd y = -\frac{\Gamma(3-\tau)}{2-\tau}=-\Gamma(2-\tau)=:A,
	\end{aligned}
	\end{equation}
	which proves Theorem~\ref{thm:ck}.
	
	The rest of the paper will be devoted to proving Propositions~\ref{prop:major} and~\ref{prop:minor}. We prove Proposition~\ref{prop:major} using a  second moment method. We can compute the expected value of $c(k)$ conditioned on the degrees as
	\begin{equation}\label{eq:condexeq}
		\Expn{c(k)}=\frac{2\Expn{
		\sum_{w:\Der_w=k}	\triangle(w)}}{N_kk(k-1)},
	\end{equation}
	where $\triangle(w)$ denotes the number of triangles containing vertex $w$ and $\expec_n$ denotes the conditional expectation given the degrees. 
	Let $X_{ij}$ denote the number of edges between vertex $i$ and $j$ in the configuration model, and $\hat{X}_{ij}$ the number of edges between $i$ and $j$ in the corresponding erased configuration model, so that $\hat{X}_{ij}\in\{0,1\}$. 
	Now,
	\begin{equation}\label{eq:triangeq}
		\Expn{\triangle(w)\mid \Der_w=k}=\tfrac{1}{2}\sum_{u,v\neq w} \prob_n(\hat{X}_{wu}= \hat{X}_{wv}=\hat{ X}_{uv}= 1\mid \Der_w=k).
	\end{equation}
	Thus, to find the expected number of triangles, we need to compute the probability that a triangle between vertices $u$, $v$ and $w$ exists, which we will do in Section~\ref{sec:expmajor}. After that, we show that this expectation converges to a constant when taking the randomness of the degrees into account, and that the variance conditioned on the degrees is small in Section~\ref{sec:var}. 
	Then, we prove Proposition~\ref{prop:minor} in Section~\ref{sec:minor} using a first moment method. We start in Section \ref{sec:prelim} to state some preliminaries.

	\section{Preliminaries}\label{sec:prelim}
	We now introduce some lemmas that we will use frequently while proving Propositions~\ref{prop:major} and~\ref{prop:minor}. We let $\prob_n$ denote the conditional probability given $\bfDit$, and $\expec_n$ the corresponding expectation.
	 Furthermore, let $\mathcal{D}_u$ denote a uniformly chosen vertex from the degree sequence and let $L_n=\sum_{i\in[n]}D_i$ denote the sum of the degrees. 
	
	\subsection{Conditioning on the degrees}\label{sec:conddeg}
	In the proof of Proposition~\ref{prop:major} we will first condition on the degree sequence. We compute the clustering coefficient conditional on the degree sequence, and after that we show that this converges to the correct value when taking the random degrees into account. We will use the following lemma several times:
	\begin{lemma}\label{lem:conddeg}
		Let $G$ be an erased configuration model where the degrees are an i.i.d.\ sample from a random variable $D$. Then,
		\begin{align}
			\Probn{\mathcal{D}_u\in[a,b]} & = \bigOp{\Prob{D\in[a,b]}}\\
			\Expn{f(\mathcal{D}_u)}&=\bigOp{\Exp{f(D)}}.
		\end{align}
	\end{lemma}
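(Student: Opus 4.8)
The plan is to prove the two statements of Lemma~\ref{lem:conddeg} by a straightforward conditioning argument, using that $\mathcal{D}_u$ is a \emph{uniformly chosen} coordinate of the i.i.d.\ degree sequence $\bfDit=(D_1,\dots,D_n)$. The key observation is that, conditionally on $\bfDit$, we have
\begin{equation}
\Probn{\mathcal{D}_u\in[a,b]}=\frac{1}{n}\sum_{i\in[n]}\indic{D_i\in[a,b]},
\qquad
\Expn{f(\mathcal{D}_u)}=\frac{1}{n}\sum_{i\in[n]}f(D_i).
\end{equation}
Both right-hand sides are empirical averages of i.i.d.\ random variables, and their \emph{expectations} (now over the randomness of $\bfDit$ as well) are exactly $\Prob{D\in[a,b]}$ and $\Exp{f(D)}$ respectively. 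So the lemma is really the statement that these empirical averages are $\bigOp{}$ of their means.

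First I would handle the first display. Write $p=\Prob{D\in[a,b]}$ and $S_n=\sum_{i\in[n]}\indic{D_i\in[a,b]}$, so $S_n\sim\mathrm{Bin}(n,p)$ and $\Probn{\mathcal{D}_u\in[a,b]}=S_n/n$. By the Markov inequality, for any $K>0$,
\begin{equation}
\Prob{\frac{S_n}{n}>Kp}\le\frac{\Exp{S_n/n}}{Kp}=\frac{1}{K},
\end{equation}
which shows that $S_n/(np)$ is a tight family of random variables, i.e.\ $\Probn{\mathcal{D}_u\in[a,b]}=\bigOp{p}$, uniformly in the choice of $[a,b]$. The second display is the same argument with $\indic{D_i\in[a,b]}$ replaced by $f(D_i)$: provided $f\ge 0$ (or one splits $f$ into positive and negative parts, each with finite mean), $Y_n:=\frac1n\sum_{i\in[n]}f(D_i)$ has $\Exp{Y_n}=\Exp{f(D)}$, and Markov again gives $\Prob{Y_n>K\Exp{f(D)}}\le 1/K$, hence $Y_n=\bigOp{\Exp{f(D)}}$. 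One should remark that the statement is only meaningful when $\Exp{f(D)}<\infty$ (and $>0$), which will be the case in every application in the paper; if $f$ can take negative values one uses $|f|$ and notes $|Y_n|\le\frac1n\sum|f(D_i)|$.

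The only genuine subtlety — and really the only thing worth spelling out — is the uniformity: the claim $\bigOp{\cdot}$ must hold along the \emph{same} subsequence / with constants not depending on $n$ even though the intervals $[a,b]$ (or the function $f$) may themselves depend on $n$ in later applications. But the Markov bound above has a constant ($1/K$) that is completely independent of $n$, of $[a,b]$, and of $f$, so tightness is genuinely uniform; no obstacle arises here. I do not expect any hard part in this lemma: it is a packaging of ``an i.i.d.\ empirical average is $O_{\prob}$ of its mean'' into the notation used throughout the paper, and the proof is two applications of Markov's inequality as above.
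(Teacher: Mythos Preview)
Your proposal is correct and follows essentially the same approach as the paper: both recognize that $\Probn{\mathcal{D}_u\in[a,b]}$ and $\Expn{f(\mathcal{D}_u)}$ are empirical averages with mean $\Prob{D\in[a,b]}$ and $\Exp{f(D)}$, and apply Markov's inequality to deduce the $\bigOp{\cdot}$ bound with the uniform constant $1/K$. Your write-up is in fact more explicit than the paper's (which just states the Markov bound for the first claim and says the second is similar), but the argument is identical.
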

\begin{proof}
	By using the Markov inequality, we obtain for $M>0$
	\begin{equation}
	\begin{aligned}[b]
		\Prob{\Probn{\mathcal{D}_u\in[a,b]}\geq M \Prob{D\in[a,b]}}\leq \frac{\Exp{\Probn{\mathcal{D}_u\in[a,b]}}}{M\Prob{D\in[a,b]}}=\frac{1}{M},
	\end{aligned}
	\end{equation}
	and the second claim can be proven in a very similar way.
\end{proof}

In the proof of Theorem~\ref{thm:ck} we will often estimate moments of $D$, conditional on the degrees. The following lemma shows how to bound these moments, and is a direct consequence of the Stable Law Central Limit Theorem:
\begin{lemma}\label{lem:expind}
	Let $\mathcal{D}_u$ be a uniformly chosen vertex from the degree sequence, where the degrees are an i.i.d.\ sample from a power-law distribution with exponent $\tau\in(2,3)$. Then, for $\alpha>\tau-1$,
	\begin{equation}
		\Expn{\mathcal{D}_u^\alpha}=\bigOp{n^{\alpha/(\tau-1)-1}}.
	\end{equation}
\end{lemma}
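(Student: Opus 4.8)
The plan is to prove Lemma~\ref{lem:expind} as a direct consequence of the Stable Law Central Limit Theorem, exactly as the statement announces. First I would recall that since the degrees $D_1,\dots,D_n$ are an i.i.d.\ sample from a power-law distribution with exponent $\tau\in(2,3)$ as in~\eqref{D-tail}, for any fixed $\alpha>\tau-1$ the random variable $D^\alpha$ has a power-law tail with exponent $\tau/\alpha\in(0,2)$ (indeed, $\prob(D^\alpha>x)=\prob(D>x^{1/\alpha})\sim C' x^{-\tau/\alpha}$), and hence lies in the domain of attraction of an $(\tau/\alpha)$-stable law. More precisely, writing $S_n=\sum_{i=1}^n D_i^\alpha$, the Stable Law CLT gives that $S_n/n^{\alpha/(\tau-1)}$ converges in distribution to a nondegenerate (one-sided) stable random variable; here the normalisation is $a_n\asymp n^{1/(\tau/\alpha)}=n^{\alpha/\tau}$ times a correction, but since $\alpha>\tau-1$ the dominant scale is $n^{\alpha/(\tau-1)}$ once one accounts for the fact that each $D_i\le d_{\max}=\bigOp{n^{1/(\tau-1)}}$. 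In fact the cleanest route is to observe $\Expn{\mathcal{D}_u^\alpha}=\frac1n\sum_{i=1}^n D_i^\alpha=S_n/n$, so the claim $\Expn{\mathcal{D}_u^\alpha}=\bigOp{n^{\alpha/(\tau-1)-1}}$ is equivalent to $S_n=\bigOp{n^{\alpha/(\tau-1)}}$, i.e.\ tightness of $S_n/n^{\alpha/(\tau-1)}$.

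The key steps, in order, are: (i) reduce to showing tightness of $S_n/n^{\alpha/(\tau-1)}$; (ii) split $S_n=\sum_i D_i^\alpha\indic{D_i\le n^{1/(\tau-1)}} + \sum_i D_i^\alpha\indic{D_i>n^{1/(\tau-1)}}$; (iii) for the truncated part, bound the expectation by $n\,\Exp{D^\alpha\indic{D\le n^{1/(\tau-1)}}}=n\cdot O\!\big(n^{(\alpha-\tau+1)/(\tau-1)}\big)=O\!\big(n^{\alpha/(\tau-1)}\big)$ using that $\Exp{D^\alpha\indic{D\le m}}\asymp m^{\alpha-\tau+1}$ for $\alpha>\tau-1$ (a routine tail integration against~\eqref{D-tail}), and invoke the Markov inequality to get $\bigOp{n^{\alpha/(\tau-1)}}$; (iv) for the exceedance part, use that $\Prob{\exists\, i: D_i>\omega_n\, n^{1/(\tau-1)}}\to 0$ for any $\omega_n\to\infty$ (which is exactly the statement $d_{\max}=\bigOp{n^{1/(\tau-1)}}$ recorded in the excerpt), and on the complementary event each surviving term contributes at most a bounded multiple of $n^{\alpha/(\tau-1)}$, with the number of such terms being $\bigOp{1}$ by a first-moment bound $\Exp{\#\{i:D_i>n^{1/(\tau-1)}\}}=n\,\Prob{D>n^{1/(\tau-1)}}=O(1)$. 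Combining (iii) and (iv) yields $S_n=\bigOp{n^{\alpha/(\tau-1)}}$, which is the claim. Alternatively, steps (ii)–(iv) can be replaced by a one-line citation of the Stable Law CLT (e.g.\ from a standard reference on heavy-tailed sums), since convergence in distribution of $S_n/n^{\alpha/(\tau-1)}$ to a stable limit immediately implies the required tightness.

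The main obstacle, such as it is, is bookkeeping the normalising constant: one must be careful that for $\alpha>\tau-1$ the correct scale for $\sum D_i^\alpha$ is $n^{\alpha/(\tau-1)}$ and not $n^{\alpha/\tau}$ — the point being that although $D^\alpha$ has stable index $\tau/\alpha$, the maximal term $D_{\max}^\alpha\asymp n^{\alpha/(\tau-1)}$ already accounts for the whole sum up to constants, so the sum is dominated by its largest few terms rather than exhibiting a genuine $(\tau/\alpha)$-stable fluctuation on scale $n^{\alpha/\tau}$ (these scales coincide only at $\alpha=\tau-1$, the boundary case excluded here). Once this is kept straight, everything else is elementary: tail estimates against~\eqref{D-tail}, the Markov inequality, and the already-cited bound $d_{\max}=\bigOp{n^{1/(\tau-1)}}$. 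I would therefore present the proof in the truncation form above, which is self-contained and makes the role of $\alpha>\tau-1$ transparent.
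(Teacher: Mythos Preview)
Your truncation argument in steps (i)--(iv) is correct and self-contained, and yields the claim. The paper's proof is the one-line route you mention as an alternative: it writes $\Expn{\mathcal{D}_u^\alpha}=\tfrac{1}{n}\sum_i D_i^\alpha$, observes that the $D_i^\alpha$ are i.i.d.\ with a regularly varying tail of index strictly less than $1$, and invokes the Stable Law CLT (citing Whitt) to conclude $\sum_i D_i^\alpha=\bigOp{n^{\alpha/(\tau-1)}}$. So your elementary truncation is a genuine alternative that avoids the external reference, at the cost of a few extra lines.

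That said, your discussion surrounding the Stable Law CLT contains an error that you should fix, because it creates a phantom obstacle. From~\eqref{D-tail} one has $\Prob{D>x}\sim C'x^{-(\tau-1)}$, hence
\[
\Prob{D^\alpha>x}=\Prob{D>x^{1/\alpha}}\sim C' x^{-(\tau-1)/\alpha},
\]
so the tail index of $D^\alpha$ is $(\tau-1)/\alpha$, \emph{not} $\tau/\alpha$. Since $\alpha>\tau-1$, this index lies in $(0,1)$, and the Stable Law CLT then gives normalisation $a_n\asymp n^{\alpha/(\tau-1)}$ directly---there is no competing scale $n^{\alpha/\tau}$, no ``correction'' to account for, and no tension between the CLT scaling and the maximal-term heuristic. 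Your entire final paragraph (``the main obstacle\ldots'') is therefore based on a miscomputation and should be deleted; the two routes (stable CLT and truncation) agree on the scale for the straightforward reason that the tail index is $(\tau-1)/\alpha$.
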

\begin{proof}
	We have
	\begin{equation}
		\Expn{\mathcal{D}_u^\alpha}=\frac{1}{n}\sum_{i=1}^{n}D_i^\alpha.
	\end{equation}
	Since the $D_i$ are an i.i.d.\ sample from a power-law distribution with exponent $\tau$, $D_i^\alpha$ are distributed as i.i.d.\ samples from a power-law with exponent $(1-\tau)/\alpha+1<1$. 
	Then, by the Stable law Central Limit Theorem (see for example~\cite[Theorem 4.5.1]{whitt2006}),
	\begin{equation}
		\sum_{i=1}^{n}D_i^\alpha = \bigOp{n^{\frac{\alpha}{\tau-1}}},
	\end{equation}
	which proves the lemma.
\end{proof}

We also need to relate $L_n$ and its expected value $\mu n$. Define the event
	\begin{equation}\label{eq:Jn}
	J_n = \left\{ \abs{L_n-\mu n}\leq n^{1/(\tau-1)}\right\}.
	\end{equation}
By~\cite{Hoorn2015}, $\Prob{J_n}\to 1$ as $n\to\infty$. When we condition on the degree sequence, we will assume that the event $J_n$ takes place.

	\subsection{Erased and non-erased degrees}\label{sec:erdegrees}	
	The degree sequence of the erased configuration model may differ from the original degree sequence of the original configuration model. We now show that this difference is small with high probability. 
By~\cite[Eq A(9)]{britton2006}, the probability that a half-edge incident to a vertex of degree $o(n)$ is removed is $o(1)$. Therefore,
	\begin{equation}
	\Der_i=D_i(1+\op(1))
	\end{equation}
as long as $D_i=o(n)$. Since the maximal degree in the configuration model with i.i.d.\ degrees is $O_\prob(n^{1/(\tau-1)})$, $D_i=\op(n)$ uniformly in $i$. 
Thus, in many proofs, we will exchange $D_i$ and $\Der_i$ when needed.

	\section{Second moment method on main contribution $W_n^k(\varepsilon)$}\label{sec:prop1}
	We now focus on the triangles that give the main contribution. First, we condition on the degree sequence and compute the expected number of triangles in the main contributing regime. Then, we show that this expectation converges to a constant when taking the i.i.d.\ degrees into account. After that, we show that the variance of the number of triangles in the main contributing regime is small, and we prove Proposition~\ref{prop:major}.
	
	\subsection{Conditional expectation inside $W_n^k(\varepsilon)$}\label{sec:expmajor}
	
	In this section, we compute the expectation of the number of triangles in the major contributing ranges of~\ref{eq:wkn} when we condition on the degree sequence.
	 We define
	\begin{equation}\label{eq:gn}
	g_n(D_u,D_v,D_w):=(1-\me^{-D_uD_v/L_n})(1-\me^{-D_uD_w/L_n})(1-\me^{-D_vD_w/L_n}).
	\end{equation}
	Then, the following lemma shows that the expectation of $c(k)$ conditioned on the degrees is the sum of $g_n(D_u,D_v,D_w)$ over all degrees in the major contributing regime:
	
	\begin{lemma}\label{lem:expk}
		On the event $J_n$, 
		\begin{equation}\label{eq:exk}
		\Expn{c(k,W_n^k(\varepsilon))}=\frac{ \sum_{(u,v)\in W_n^k(\varepsilon)}g_n(k,D_u,D_v)}{k(k-1)} (1+\op(1)).
		\end{equation}
	\end{lemma}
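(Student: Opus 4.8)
The plan is to start from the conditional-expectation identity~\eqref{eq:condexeq}--\eqref{eq:triangeq}, restricted to pairs $(u,v)\in W_n^k(\varepsilon)$, and to show that the erased-edge indicator probability $\prob_n(\hat X_{wu}=\hat X_{wv}=\hat X_{uv}=1)$ is, up to $(1+\op(1))$, equal to the product $g_n(k,D_u,D_v)$ defined in~\eqref{eq:gn}. Concretely, I would write
\[
\Expn{c(k,W_n^k(\varepsilon))}=\frac{2}{N_kk(k-1)}\Expn{\sum_{w:\Der_w=k}\tfrac12\sum_{(u,v)\in W_n^k(\varepsilon)}\indic{\hat X_{wu}=\hat X_{wv}=\hat X_{uv}=1}},
\]
interchange expectation and sum, and observe that, since we work on $J_n$ and since $\Der_w=k$ forces $D_w=k(1+\op(1))$ by Section~\ref{sec:erdegrees}, the factor $N_k$ in the denominator cancels the (conditionally deterministic) count of vertices of erased degree $k$, leaving $k(k-1)$ in the denominator and $\sum_{(u,v)\in W_n^k(\varepsilon)}\prob_n(\hat X_{ku}=\hat X_{kv}=\hat X_{uv}=1)$ in the numerator, where I abbreviate a generic degree-$k$ vertex by $k$.

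The heart of the argument is the estimate
\[
\prob_n\big(\hat X_{wu}=\hat X_{wv}=\hat X_{uv}=1\big)=g_n(D_w,D_u,D_v)(1+\op(1))
\]
uniformly for $(u,v)\in W_n^k(\varepsilon)$. For this I would argue in two stages. First, for a single pair $i,j$, the probability that at least one of the $D_i$ half-edges at $i$ pairs with one of the $D_j$ half-edges at $j$ is, by a standard configuration-model computation (pairing half-edges one at a time, or via the known formula for the number of edges between two vertices), $1-\me^{-D_iD_j/L_n}(1+o(1))$ whenever $D_iD_j=o(L_n)$, and is $1-o(1)$ — hence trivially $1-\me^{-D_iD_j/L_n}(1+o(1))$ as well — when $D_iD_j\gg L_n$; the definition of $W_n^k(\varepsilon)$ is precisely designed so that all three products $kD_u,kD_v,D_uD_v$ either stay $\Theta(L_n)$ or blow up, so this approximation is uniform on $W_n^k(\varepsilon)$. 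Second, I would show the three events $\{\hat X_{wu}=1\},\{\hat X_{wv}=1\},\{\hat X_{uv}=1\}$ are asymptotically independent: conditioning on the pairing of the half-edges at $u$ and at $v$ that are used to connect to $w$ changes the number of available half-edges at $u,v,w$ by at most a bounded number relative to $D_u,D_v,k$ (all of which tend to infinity in the relevant ranges, or contribute only to lower-order pairs otherwise), so each successive conditional connection probability is $1-\me^{-D_iD_j/L_n}(1+o(1))$ with the same exponent. Replacing $L_n$ by $\mu n$ is harmless on $J_n$ since $|L_n-\mu n|\le n^{1/(\tau-1)}=o(n)$, and in $g_n$ I keep $L_n$ anyway.

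The main obstacle is making the asymptotic independence and the uniformity of the approximation rigorous across all three ranges of $k$ simultaneously, since in Range I one has $k=O(1)$ possible (so the $w$-vertex has boundedly many half-edges and the "many half-edges" heuristic fails for the $w$–$u$ and $w$–$v$ connections), while in Range III it is the $u,v$ degrees that are comparatively small. In Range I, however, $kD_u\asymp kD_v = o(L_n)$, so there $1-\me^{-kD_u/L_n}=kD_u/L_n(1+o(1))$ and the probability that $w$ connects to both $u$ and $v$ is genuinely $\sim (kD_u/L_n)(kD_v/L_n)$ by a direct count of ordered pairs of half-edge pairings out of the $k$ half-edges at $w$ — I would handle this case by the explicit combinatorial estimate rather than the exponential heuristic, and check it matches $g_n$. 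The $\op(1)$ errors accumulated pairwise must be controlled uniformly in $(u,v)\in W_n^k(\varepsilon)$ and then summed; since the sum $\sum_{(u,v)\in W_n^k(\varepsilon)}g_n(k,D_u,D_v)$ is shown (in the next subsections, via Lemma~\ref{lem:expind}) to be of order $k(k-1)f(n,k)$, a uniform multiplicative $(1+\op(1))$ per term suffices, which is why the statement is phrased with $(1+\op(1))$ outside the sum.
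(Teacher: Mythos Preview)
Your overall strategy is sound and would work, but it takes a harder route than the paper. The paper does not argue asymptotic independence of the edge-presence events $\{\hat X_{wu}=1\},\{\hat X_{wv}=1\},\{\hat X_{uv}=1\}$ at all. Instead it applies inclusion--exclusion to write
\[
\Probn{\triangle_{u,v,w}=1}=1-\sum\Probn{X_{ij}=0}+\sum\Probn{X_{ij}=X_{jl}=0}-\Probn{X_{wu}=X_{wv}=X_{uv}=0},
\]
and then invokes an external result (\cite[Lemma~3.1]{hofstad2017d}) stating that each joint \emph{no-edge} probability factors as $\prod \me^{-D_iD_j/L_n}$ up to a uniform multiplicative error $(1+\op(n^{-(\tau-2)/(\tau-1)}))$ on the relevant degree range. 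The product form $g_n=(1-\me^{-\cdot})(1-\me^{-\cdot})(1-\me^{-\cdot})$ then drops out purely algebraically from inclusion--exclusion, with no conditional-probability argument needed. This neatly sidesteps exactly the obstacle you flag: when $k$ is bounded (Range~I), conditioning on $\{\hat X_{wu}=1\}$ removes a non-negligible fraction of the half-edges at $w$, so your direct independence heuristic genuinely requires the separate combinatorial patch you propose. The no-edge formulation avoids this because ``no pairing between these two half-edge sets'' is a clean restriction on the matching regardless of how small the sets are, and the cited lemma already packages the full estimate with a quantified error. Your approach buys self-containment (no external lemma), at the cost of having to handle the small-$k$ case by hand; the paper's approach is shorter and uniform across ranges but leans on \cite{hofstad2017d}.
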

	\begin{proof}
	We write the probability that a specific triangle exists as
	\begin{equation}
		\begin{aligned}[b]
			\Probn{\triangle_{u,v,w}=1} & =1- \Probn{{X}_{uw}=0}-\Probn{{X}_{vw}=0}-\Probn{{X}_{uv}=0}      +\Probn{{X}_{uw}=X_{vw}=0}                             \\
			&\quad + \Probn{{X}_{uv}=X_{vw}=0}+ \Probn{{X}_{uv}=X_{uw}=0}- \Probn{X_{uv}={X}_{uw}=X_{vw}=0}.
		\end{aligned}
	\end{equation}
	In the major contributing ranges, $D_u,D_v,D_w=\Omega(n^{(\tau-2)/(\tau-1)})$ and $D_u,D_v,D_w=O_{\sss \prob}(n^{1/(\tau-1)})$, and the product of the degrees is $O(n)$. By~\cite[Lemma 3.1]{hofstad2017d}
	\begin{equation}
		\Probn{X_{uv}=X_{vw}=0}=\me^{-D_uD_v/L_n}\me^{-D_vD_w/L_n}(1+\op(n^{-(\tau-2)/(\tau-1)}))
	\end{equation}
	and
	\begin{equation}
	\Probn{X_{uv}=X_{vw}=X_{uw}=0}=\me^{-D_uD_v/L_n}\me^{-D_vD_w/L_n}\me^{-D_uD_w/L_n}(1+\op(n^{-(\tau-2)/(\tau-1)})).
	\end{equation}
	Therefore,
	\begin{equation}\label{eq:probtriang}
	\begin{aligned}[b]
	\Probn{\triangle_{u,v,w}=1} 
	& =(1+\op(1))\left(1-\me^{-D_uD_v/L_n}\right)\left(1-\me^{-D_uD_w/L_n}\right)\left(1-\me^{-D_vD_w/L_n}\right)\\
	& = (1+\op(1))g_n(D_u,D_v,D_w),
	\end{aligned}
	\end{equation}
where we have used that for $D_uD_v=O(n)$
	\begin{equation}
		1-\me^{-D_uD_v/L_n}(1+\op(n^{-(\tau-2)/(\tau-1)}))=(1-\me^{-D_uD_v/L_n})(1+\op(1)).
	\end{equation}
We can use Lemma~\ref{lem:conddeg} to show that, given $\Der_w=k$,
	\begin{equation}\label{eq:gner}
	g_n(D_w,D_u,D_v)=g_n(k,D_u,D_v)(1+\op(1)).
	\end{equation}
Then,~\eqref{eq:condexeq} and~\eqref{eq:triangeq} show that
	\begin{equation}
		\Expn{c(k,W_n^k(\varepsilon))}=\frac{\frac{1}{2 N_k}\sum_{w:\Der_w=k}\sum_{(u,v)\in W_n^k(\varepsilon)}\Probn{\triangle_{u,v,w}=1}}{k(k-1)/2}.
\end{equation}
	Thus, we obtain 
	\begin{equation}
	\begin{aligned}[b]
	\Expn{c(k,W_n^k(\varepsilon))}&=\frac{\sum_{w:\Der_w=k}\sum_{(u,v)\in W_n^k(\varepsilon)}g_n(D_w,D_u,D_v)}{N_kk(k-1)}(1+\op(1))\\
	&=\frac{\sum_{(u,v)\in W_n^k(\varepsilon)}g_n(k,D_u,D_v)}{k(k-1)}(1+\op(1)),
	\end{aligned}
	\end{equation}
which proves the lemma.
	\end{proof}

	\subsection{Analysis of asymptotic formula}\label{sec:conv}
	In the previous section, we have shown that the expected value of $c(k)$ in the major contributing regime is the sum of a function $g_n(k,D_u,D_v)$ over all vertices $u$ and $v$ with degrees in the major contributing regime if we condition on the degrees, that is
	\begin{equation}\label{eq:ckexp}
		\Expn{c(k,W_n^k(\varepsilon))}=\frac{2}{k(k-1)}\sum_{u,v: (D_u,D_v)\in W_n^k(\varepsilon)}(1-\me^{-kD_v/L_n})(1-\me^{-kD_u/L_n})(1-\me^{-D_uD_v/L_n}).
	\end{equation} 
	This expected value does not yet take into account that the degrees are sampled i.i.d.\ from a power-law distribution. In this section, we will prove that this expected value converges to a constant when we take the randomness of the degrees into account. We will make use of the following lemmas:	
	
	\begin{lemma}\label{lem:convmeas2}
		Let $A\subset \mathbb{R}^2$ be a bounded set and $f(t_1,t_2)$ be a bounded, continuous function on $A$. Let $\Mn$ be a random measure such that for all $S\subseteq A$, $\Mn(S)\plim \lambda(S)=\int_S\dd \lambda(t_1,t_2)$ for some deterministic measure $\lambda$. Then, 
		\begin{equation}
			\int_A f(t_1,t_2)\dd \Mn(t_1,t_2)\plim \int_A f(t_1,t_2)\dd \lambda(t_1,t_2).
		\end{equation}
	\end{lemma}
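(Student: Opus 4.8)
The plan is to reduce the claim to the case of simple functions, where it follows at once from the hypothesis, and then to pass to a general bounded continuous $f$ by uniform approximation, using that convergence in probability is preserved under finite linear combinations.

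\textbf{Simple functions first.} If $f=\sum_{j=1}^{m}c_j\indicwo{S_j}$ for a finite measurable partition $A=\bigcup_{j=1}^m S_j$ and reals $c_j$, then $\int_A f\,\dd\Mn=\sum_{j=1}^m c_j\Mn(S_j)$ and $\int_A f\,\dd\lambda=\sum_{j=1}^m c_j\lambda(S_j)$, so the statement is immediate from $\Mn(S_j)\plim\lambda(S_j)$ for each $j$ together with the fact that a finite sum of sequences converging in probability converges in probability to the sum of the limits. Here I use the standing assumption $\lambda(A)<\infty$ (which holds in all the applications, since $\lambda(A)=\lim_n\Mn(A)$ in probability), so all integrals are well defined.

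\textbf{Approximating $f$.} Fix $\vep>0$. Since $f$ is bounded on $A$, I would partition $A$ into the finitely many nonempty measurable level sets $S_j:=f^{-1}\big([j\vep,(j+1)\vep)\big)\cap A$, $j\in\mathbb Z$, on each of which $f$ oscillates by at most $\vep$; picking $t^{(j)}\in S_j$ and setting $f_\vep:=\sum_j f(t^{(j)})\indicwo{S_j}$ yields a simple function with $\sup_A|f-f_\vep|\le\vep$. (When, as is typical, $\Mn(S)\plim\lambda(S)$ is only known for rectangles, one instead chops $A$ into a fine rectangular grid and invokes uniform continuity of $f$ on $\overline A$; this is the only place where boundedness of $A$ and continuity of $f$ genuinely enter.) Then I would split
\begin{equation}
\int_A f\,\dd\Mn-\int_A f\,\dd\lambda=\int_A(f-f_\vep)\,\dd\Mn+\Big(\int_A f_\vep\,\dd\Mn-\int_A f_\vep\,\dd\lambda\Big)+\int_A(f_\vep-f)\,\dd\lambda,
\end{equation}
and bound the three pieces separately: the last is $\le\vep\,\lambda(A)$ deterministically; the first is $\le\vep\,\Mn(A)$, which is $\bigOp{\vep}$ because $\Mn(A)\plim\lambda(A)<\infty$ gives tightness of $\Mn(A)$; and the middle piece tends to $0$ in probability by the simple-function case.

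\textbf{Conclusion and main obstacle.} Given $\delta>0$, I would fix $\vep$ small enough that $\vep\,\lambda(A)<\delta/3$, use tightness of $\Mn(A)$ to make $\Prob{\vep\,\Mn(A)>\delta/3}$ eventually smaller than any prescribed amount, and then let $n\to\infty$ so that $\Prob{|\int_A f_\vep\,\dd\Mn-\int_A f_\vep\,\dd\lambda|>\delta/3}\to0$; combining these bounds in the displayed decomposition gives $\Prob{|\int_A f\,\dd\Mn-\int_A f\,\dd\lambda|>\delta}\to0$, and since $\delta$ was arbitrary the lemma follows. The only real (and mild) obstacle is bookkeeping which test sets $S$ are admissible: if $\Mn(S)\plim\lambda(S)$ holds for all measurable $S\subseteq A$, the level-set partition works verbatim, while if it is available only for rectangles (or $\lambda$-continuity sets) one must approximate $f$ by step functions over such sets and discard the small $\lambda$-mass near grid boundaries. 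Everything else is routine once one notes that $\lambda(A)<\infty$, so the error terms $\vep\,\lambda(A)$ and $\vep\,\Mn(A)$ are genuinely negligible.
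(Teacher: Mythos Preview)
Your argument is essentially the same as the paper's: approximate $f$ uniformly by a simple function, split the difference of integrals into the same three pieces, and bound them by $\vep\,\Mn(A)$, $\vep\,\lambda(A)$, and a term that is $\op(1)$ by the hypothesis applied to each cell. The only cosmetic difference is that you build the simple approximant via level sets of $f$ whereas the paper simply asserts existence of a suitable partition; your added remark about which test sets $S$ are admissible is a nice clarification but not needed here since the lemma assumes $\Mn(S)\plim\lambda(S)$ for all $S\subseteq A$.
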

	\begin{proof}
		Fix $\eta>0$. 
		Since $f$ is bounded and continuous on $A$, for any $\varepsilon>0$, we can find $m<\infty $, disjoint sets $(B_i)_{i\in[m]}$ and constants $(b_i)_{i\in[m]}$ such that $\cup B_i = A$ and
		\begin{equation}
			\abs{f(t_1,t_2)-\sum_{i=1}^{m}b_i\ind{(t_1,t_1)\in B_i}}<\varepsilon,
		\end{equation}
		for all $(t_1,t_2)\in A$. 
		Because $\Mn(B_i)\plim \lambda(B_i)$ for all $i$,
		\begin{equation}
			\lim_{n\to\infty}\Prob{\abs{\Mn(B_i)-\lambda(B_i)}>\eta/m}=0.
		\end{equation}
		Then,
		\begin{equation}
			\begin{aligned}[b]
			\abs{\int_A f(t_1,t_2)\dd \Mn(t_1,t_2)-\int_A f(t_1,t_2)\dd \lambda(t_1,t_2)}
			& \leq \abs{\int_A f(t_1,t_2)-\sum_{i=1}^mb_i \ind{(t_1,t_2)\in B_i}\dd \Mn(t_1,t_2)}\\
				& \quad+ \abs{\int_A f(t_1,t_2)-\sum_{i=1}^mb_i \ind{(t_1,t_2)\in B_i}\dd \lambda(t_1,t_2)}\\
				&\quad  +\abs{\sum_{i=1}^mb_i(\Mn(B_i)-\lambda(B_i))}\\
				& \leq  \varepsilon \Mn(A)+\varepsilon \lambda(A)+\op(\eta).
			\end{aligned}
		\end{equation}
		Now choosing $\varepsilon<\eta/\lambda(A)$ proves the lemma.
	\end{proof}
	
The following lemma is a straightforward one-dimensional version of Lemma~\ref{lem:convmeas2}.
\begin{lemma}\label{lem:convmeas}
	Let $\Mn[a,b]$ be a random measure such that for all $0<a<b$, $\Mn[a,b]\plim \lambda[a,b]=\int_a^b\dd \lambda(t)$ for some deterministic measure $\lambda$. Let $f(t)$ be a bounded, continuous function on $[\varepsilon,1/\varepsilon]$. Then, 
	\begin{equation}
	\int_{\varepsilon}^{1/\varepsilon}f(t)\dd \Mn(t)\plim \int_{\varepsilon}^{1/\varepsilon}f(t)\dd \lambda(t).
	\end{equation}
\end{lemma}
\begin{proof}
	This proof follows the same lines as the proof of Lemma~\ref{lem:convmeas2}.
\end{proof}
	
	Using these lemmas we investigate the convergence of the expectation of $c(k)$ conditioned on the degrees. We treat the three ranges separately, but the proofs follow the same structure. First, we define a random measure $\Mn$ that counts the normalized number of vertices with degrees in the major contributing regime. We then show that this measure converges to a deterministic measure $\lambda$, using that the degrees are i.i.d.\ samples of a power-law distribution. We then write the conditional expectation of the previous section as an integral over measure $\Mn$. Then, we can use Lemmas~\ref{lem:convmeas} or~\ref{lem:convmeas2} to show that this converges to a deterministic integral. 
	\medskip
	
	First, we consider the case where $k$ is in Range I.
	\begin{lemma}\label{lem:convksmall}
		\textup{(Range I)} For $k=o(n^{(\tau-2)/(\tau-1)})$,
		\begin{equation}
		\frac{\Expn{c(k,W_n^k(\varepsilon))}}{n^{2-\tau}\log(n)}\plim \mu^{-\tau} C^2\frac{3-\tau}{\tau-1}\int_{\varepsilon}^{1/\varepsilon}t^{1-\tau}(1-\me^{-t})\dd t .
		\end{equation}
		\end{lemma}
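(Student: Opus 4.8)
The plan is to follow the scheme outlined just before the lemma: identify a random measure $\Mn$ that encodes the empirical distribution of the rescaled degrees, show it converges to an explicit deterministic measure $\lambda$, rewrite $\Expn{c(k,W_n^k(\varepsilon))}$ as an integral against $\Mn$, and apply Lemma~\ref{lem:convmeas2}. Concretely, starting from Lemma~\ref{lem:expk} and \eqref{eq:ckexp}, in Range~I we have $W_n^k(\varepsilon)=\{(u,v):D_uD_v\in[\varepsilon,1/\varepsilon]\mu n\}$, and for $k=o(n^{(\tau-2)/(\tau-1)})$ the factor $1-\me^{-kD_u/L_n}$ is small: since $D_u=\bigOp{n^{1/(\tau-1)}}$ we have $kD_u/L_n = \op(n^{(\tau-2)/(\tau-1)} \cdot n^{1/(\tau-1)} / n)=\op(1)$, so $1-\me^{-kD_u/L_n}=(kD_u/L_n)(1+\op(1))$, and likewise for $v$. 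Hence
\begin{equation}
\Expn{c(k,W_n^k(\varepsilon))}=\frac{2(1+\op(1))}{k(k-1)}\cdot\frac{k^2}{L_n^2}\sum_{(u,v)\in W_n^k(\varepsilon)}D_uD_v\,(1-\me^{-D_uD_v/L_n}),
\end{equation}
so that the prefactor $k^2/(k(k-1))\to 1$ and $k$ disappears entirely, which is exactly why $c(k)$ is $k$-independent in Range~I.

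Next I would perform the change of variables that makes the limit visible. Since only the product $D_uD_v$ matters, parametrize by $D_u=n^{1/(\tau-1)}x$, and note $D_uD_v/L_n\approx D_uD_v/(\mu n)$ on $J_n$. I would introduce $\Mn$ as the measure on $(0,\infty)^2$ putting mass $n^{-2/(\tau-1)}$... actually more cleanly: write the sum as a double integral against the empirical measure of rescaled degrees $D_i/n^{1/(\tau-1)}$, whose $j$-th moment behaves like $\Expn{\mathcal D_u^j}=\bigOp{n^{j/(\tau-1)-1}}$ by Lemma~\ref{lem:expind}. The constraint $D_uD_v\in[\varepsilon,1/\varepsilon]\mu n$ combined with the fact that in Range~I the degrees can range over a wide window forces a logarithm: fixing the value $s:=D_uD_v/(\mu n)\in[\varepsilon,1/\varepsilon]$, the number of pairs $(u,v)$ with $D_uD_v\approx s\mu n$ gains a factor $\log n$ because $D_u$ can be anything from $O(1)$ up to $O(n)$ (subject to $D_v=s\mu n/D_u$ also being a legitimate degree, i.e. $\geq 1$), and summing $\prob(D=a)\prob(D=s\mu n/a)\approx C^2 a^{-\tau}(s\mu n/a)^{-\tau}$ over $a$ from $1$ to $s\mu n$ gives $C^2(s\mu n)^{-\tau}\cdot\sum_{a} a^{\tau}/a^{\tau}\cdot(\dots)$ — more carefully, one gets $C^2 (s\mu n)^{1-\tau}\log(n)$ up to the constant $\tfrac{3-\tau}{\tau-1}$ coming from the precise range of summation and the extra $D_uD_v$ weight. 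I would make this rigorous by writing the double sum as $\sum_s (\text{weight } s)\cdot(\text{number of factorizations})$ and recognizing the factorization count, suitably normalized by $\log n$, as converging in probability to a constant times $s^{1-\tau}$ via a law-of-large-numbers argument on the i.i.d.\ degrees (this is where $J_n$ and Lemma~\ref{lem:conddeg} are used to control fluctuations).

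Then I would assemble the pieces: after dividing by $n^{2-\tau}\log n$, the sum becomes $\int_\varepsilon^{1/\varepsilon}(\text{density in }s)\,(1-\me^{-s})\,\dd s$ where the density is $\mu^{-\tau}C^2\tfrac{3-\tau}{\tau-1}s^{1-\tau}$ (the $\mu^{-\tau}$ from $L_n^{-2}\approx(\mu n)^{-2}$ times the $(\mu n)$ from $D_uD_v=s\mu n$ times powers bookkeeping, and the $s^{1-\tau}$ from $s^{-\tau}\cdot s$), and the factor $(1-\me^{-s})$ survives untouched as the $1-\me^{-D_uD_v/L_n}$ term. Invoking Lemma~\ref{lem:convmeas} (the one-dimensional version, since effectively everything reduces to the single variable $s$) with $f(s)=1-\me^{-s}$, which is bounded and continuous on $[\varepsilon,1/\varepsilon]$, upgrades the convergence of the measure to convergence of the integral, yielding
\begin{equation}
\frac{\Expn{c(k,W_n^k(\varepsilon))}}{n^{2-\tau}\log n}\plim \mu^{-\tau}C^2\,\frac{3-\tau}{\tau-1}\int_\varepsilon^{1/\varepsilon}t^{1-\tau}(1-\me^{-t})\,\dd t,
\end{equation}
as claimed.

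The main obstacle I anticipate is establishing the $\log n$ asymptotics for the factorization count rigorously and uniformly — i.e., showing that $\frac{1}{\log n}\sum_{u,v:D_uD_v\in[\varepsilon,1/\varepsilon]\mu n}D_uD_v(1-\me^{-D_uD_v/L_n})$, divided by the appropriate power of $n$, converges in probability to the stated integral. The delicate point is the interplay between the two ``hard'' cutoffs on $D_u$ (it must be a realized degree, so at most $d_{\max}=\bigOp{n^{1/(\tau-1)}}$, and at least $1$) and the ``soft'' constraint $D_uD_v\asymp n$: the logarithm comes precisely from integrating $\dd D_u/D_u$ across the admissible window, and one must check that the endpoints contribute only lower-order corrections and that the random fluctuations of the empirical degree measure do not spoil the $\log n$ scale. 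I would handle this by splitting the $D_u$-range dyadically, applying a concentration/second-moment estimate on each dyadic block using Lemma~\ref{lem:expind}, and summing; the number of blocks is $\Theta(\log n)$, which is the source of the logarithm. Everything else — the expansion $1-\me^{-kD_u/L_n}=kD_u/L_n(1+\op(1))$, the replacement $L_n\to\mu n$ on $J_n$, and the final application of Lemma~\ref{lem:convmeas} — is routine.
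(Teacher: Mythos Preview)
Your proposal is essentially correct and follows the same route as the paper: Taylor expand the two $k$-dependent factors using $kD_u/L_n=\op(1)$, reduce everything to the single variable $s=D_uD_v/(\mu n)$, show an appropriately normalized empirical count converges to a deterministic density proportional to $s^{-\tau}$, and invoke Lemma~\ref{lem:convmeas} with $f(s)=s(1-\me^{-s})$.

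Two technical points where the paper is tighter than your sketch. First, the constant $\tfrac{3-\tau}{\tau-1}$ does \emph{not} arise from the range ``$1$ to $s\mu n$'' or ``$O(1)$ to $d_{\max}$'' as you wrote; it comes from the fact that \emph{both} $D_u$ and $D_v$ are bounded by $d_{\max}=\bigOp{n^{1/(\tau-1)}}$, which together with $D_uD_v\asymp n$ forces $D_u\in[\Theta(n^{(\tau-2)/(\tau-1)}),\Theta(n^{1/(\tau-1)})]$. The paper makes this explicit by inserting the indicator $K_1 n^{(\tau-2)/(\tau-1)}<D_u<K_2 n^{1/(\tau-1)}$ and computing $\int_{K_1 n^{(\tau-2)/(\tau-1)}}^{K_2 n^{1/(\tau-1)}} x^{-1}\,\dd x = \tfrac{3-\tau}{\tau-1}\log n + \log(K_2/K_1)$, then letting $K_1\to 0$, $K_2\to\infty$ at the end. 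Second, rather than dyadic blocks and concentration on each, the paper defines the one-dimensional random measure
\[
\Mn[a,b]=\frac{(\mu n)^{\tau-1}}{n^2\log n}\sum_{u,v}\ind{D_uD_v\in\mu n[a,b],\ K_1 n^{(\tau-2)/(\tau-1)}<D_u<K_2 n^{1/(\tau-1)}}
\]
and computes $\Exp{\Mn[a,b]}\to C^2\tfrac{3-\tau}{\tau-1}\int_a^b t^{-\tau}\dd t$ and $\Var{\Mn[a,b]}\to 0$ directly (the variance vanishes because the only nonzero contributions are from $|\{u,v,w,z\}|\le 3$, each of order $O(n^{-1}/\log^2 n)$ or smaller). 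This is cleaner than dyadic splitting and immediately gives the hypothesis of Lemma~\ref{lem:convmeas}; your dyadic approach would also work but is more laborious.
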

	\begin{proof}
		Since the degrees are i.i.d.\ samples from a power-law distribution, $D_u=O_\prob(n^{1/(\tau-1)})$ uniformly in $u\in[n]$. Thus, when $k=o(n^{(\tau-2)/(\tau-1)})$, $kD_u=\op(n)$ uniformly in $u\in[n]$. Therefore, we can Taylor expand the first two exponentials in~\eqref{eq:ckexp}, using that $1-\e^{-x}=x+O(x^2)$. By Lemma~\ref{lem:expk}, this leads to
	\begin{equation}
		\Expn{c(k,W_n^k(\varepsilon))}=(1+\op(1)) \frac{k^2}{k(k-1)} \sum_{u,v: (D_u,D_v)\in W_n^k(\varepsilon)} \frac{D_uD_v(1-\e^{-D_uD_v/L_n})}{L_n^2}.
	\end{equation}	
Furthermore, since $D_u=O_\prob(n^{1/(\tau-1)})$ while also $D_uD_v=\Theta(n)$ in the major contributing regime, we can add the indicator that $K_1n^{(\tau-2)/(\tau-1)}<D_u<K_2n^{1/(\tau-1)}$ for $0<K_1,K_2<\infty$. 
We then define the random measure 
	\begin{equation}\label{eq:Mn}
	\Mn[a,b] = \frac{(\mu n)^{\tau-1}}{\log(n)n^2}\sum_{u,v\in[n]}\ind{D_uD_v\in n\mu [a,b], K_1n^{(\tau-2)/(\tau-1)}<D_u<K_2n^{1/(\tau-1)}}.
	\end{equation} 
We can write the expected value of this measure as
	\begin{equation}
	\begin{aligned}[b]
	\Exp{\Mn[a,b]} & = \frac{(\mu n)^{\tau-1}}{\log(n)n^2}\Exp{\abs{\left\{i,j: D_iD_j\in[a,b]\mu n, D_i\in [K_1n^{(\tau-2)/(\tau-1)},K_2n^{\frac{1}{\tau-1}}]\right\}}}\\
	& = \frac{(\mu n)^{\tau-1}}{\log(n)}\Prob{D_1D_2\in [a,b]\mu n,  D_1\in [K_1n^{(\tau-2)/(\tau-1)},K_2n^{\frac{1}{\tau-1}}]}                                        \\
	& = \frac{(\mu n)^{\tau-1}}{\log(n)}\int_{K_1n^{(\tau-2)/(\tau-1)}}^{K_2n^{\frac{1}{\tau-1}}}\int_{a\mu n/x}^{b\mu n/x}c^2(xy)^{-\tau}\dd y \dd x \\
	& = c^2\frac{(\mu n)^{\tau-1}}{\log(n)}\int_{K_1n^{(\tau-2)/(\tau-1)}}^{K_2n^{\frac{1}{\tau-1}}}\frac 1x \dd x \int_{a\mu n}^{b\mu n}u^{-\tau}\dd u \\
	& =C^2\int_{a}^{b}t^{-\tau}\dd t \left(\frac{3-\tau}{\tau-1}+\frac{\log(K_2/K_1)}{\log(n)}\right).                                                                                                                              
	\end{aligned}
	\end{equation}
Thus,
	\begin{equation}
		\lim_{n\to\infty}\Exp{\Mn[a,b]}= C^2\frac{3-\tau}{\tau-1}\int_{a}^{b}t^{-\tau}\dd t =:\lambda [a,b].
	\end{equation}
Furthermore, the variance of this measure can be written as
	\begin{equation}
	\begin{aligned}[b]
	\Var{\Mn[a,b]}& = \frac{(\mu n)^{2\tau-6}}{\log^2(n)}\sum_{u,v,w,z}\big(\Prob{D_uD_v,D_wD_z\in \mu n[a,b],D_u,D_w\in [K_1n^{(\tau-2)/(\tau-1)},K_2n^{\frac{1}{\tau-1}}]}\\
	&\quad -\Prob{D_uD_v\in \mu n[a,b], D_u\in [K_1n^{(\tau-2)/(\tau-1)},K_2n^{\frac{1}{\tau-1}}]}\\
	&\quad \times \Prob{D_wD_z\in \mu n[a,b],D_w\in [K_1n^{(\tau-2)/(\tau-1)},K_2n^{\frac{1}{\tau-1}}]}\big).
	\end{aligned}
	\end{equation}
Since the degrees are sampled i.i.d.\ from a power-law distribution, the contribution to the variance for $|\{u,v,w,z\}|=4$ is zero. 
The contribution from $|\{u,v,w,z\}|=3$ can be bounded as
	\begin{equation}
	\begin{aligned}[b]
	\frac{(\mu n)^{2\tau-6}}{\log^2(n)}\sum_{u,v,w}\Prob{D_uD_v,D_uD_w\in \mu n[a,b]}& = \frac{\mu^{2\tau-6}n^{2\tau-3}}{\log^2(n)}\Prob{{D}_1{D}_2,{D}_1{D}_3\in \mu n[a,b]}\\
	&  = \frac{\mu^{2\tau-6}n^{2\tau-3}}{\log^2(n)}\int_{1}^{\infty}cx^{-\tau}\left(\int_{an/x}^{bn/x}cy^{-\tau}\dd y\right)^2\dd x\\
	& \leq K\frac{n^{-1}}{\log^2(n)},
	\end{aligned}
	\end{equation}
for some constant $K$. Similarly, the contribution for $u=z$, $v=w$ can be bounded as
	\begin{equation}
	\begin{aligned}[b]
	\frac{(\mu n)^{2\tau-6}}{\log^2(n)}\sum_{u,v}\Prob{D_uD_v\in \mu n[a,b]}& = \frac{\mu^{2\tau-6}n^{2\tau-4}}{\log^2(n)}\Prob{{D}_1{D}_2\in \mu n[a,b]}\\
	&  \leq K\frac{n^{2\tau-4}}{\log^2(n)}n^{1-\tau}\log(n)= C\frac{n^{\tau-3}}{\log(n)},
	\end{aligned}
	\end{equation}
for some constant $K$. Thus, $\Var{\Mn[a,b]}=o_\prob(1)$. Therefore, a second moment method yields that for every $a,b>0$, 
	\begin{equation}\label{eq:convbin}
		\Mn[a,b]\plim \lambda[a,b].
	\end{equation}
	Then, 
	\begin{equation}
	\begin{aligned}[b]
	\sum_{u,v: (D_u,D_v)\in W_n^k(\varepsilon)} \frac{D_uD_v(1-\e^{-D_uD_v/L_n})}{L_n^2} & =\mu^{1-\tau}n^{3-\tau}\log(n)\int_{\varepsilon}^{1/\varepsilon}\frac{t}{L_n}(1-\me^{-t})\dd \Mn(t) \\
	& =\mu^{-\tau}n^{2-\tau}\log(n)\int_{\varepsilon}^{1/\varepsilon}t(1-\me^{-t})\dd \Mn(t)(1+o_\prob(1)).          
	\end{aligned}
	\end{equation}
By Lemma~\ref{lem:convmeas},
	\begin{equation}
	\begin{aligned}[b]
	\int_{\varepsilon}^{1/\varepsilon}t(1-\me^{-t})\dd \Mn(t) & \plim \int_{\varepsilon}^{1/\varepsilon}t(1-\me^{-t})\dd \lambda(t)\\
	&  =  C^2\frac{3-\tau}{\tau-1} \int_{\varepsilon}^{1/\varepsilon}t^{1-\tau}(1-\me^{-t})\dd t.
	\end{aligned}
	\end{equation}
If we first let $n\to\infty$, and then $K_1\to 0$ and $K_2\to\infty$, then we obtain
	\begin{equation}
	\frac{\Expn{c(k),W_n^k(\varepsilon)}}{n^{2-\tau}\log(n)}\plim C^2\mu^{-\tau}\frac{3-\tau}{\tau-1} \int_{\varepsilon}^{1/\varepsilon}t^{1-\tau}(1-\me^{-t})\dd t.
	\end{equation}
	\end{proof}
	
	
	\begin{lemma}\label{lem:convkmiddle}
		\textup{(Range II)} When $a n^{(\tau-2)/(\tau-1)}\leq k\ll\sqrt{n}$ for some $a>0$,
		\begin{equation}
			\frac{\Expn{c(k,W_n^k(\varepsilon))}}{n^{2-\tau}\log(n/k^2)}\plim C^2\mu^{-\tau}\int_{\varepsilon}^{1/\varepsilon}t^{1-\tau}(1-\me^{-t})\dd t.
		\end{equation}
	\end{lemma}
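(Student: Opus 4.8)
\textbf{Proof proposal for Lemma~\ref{lem:convkmiddle} (Range II).}
The plan is to mimic the structure of the proof of Lemma~\ref{lem:convksmall}, i.e. the Range~I case, but with a different normalization reflecting the fact that $k$ now satisfies $an^{(\tau-2)/(\tau-1)}\le k\ll\sqrt n$ and that the set $W_n^k(\varepsilon)$ in this regime carries the extra constraint $D_u,D_v<\mu n/(k\varepsilon)$. First I would start from the conditional expectation formula of Lemma~\ref{lem:expk}, namely
\begin{equation}
\Expn{c(k,W_n^k(\varepsilon))}=\frac{1}{k(k-1)}\sum_{(u,v)\in W_n^k(\varepsilon)}\left(1-\me^{-kD_u/L_n}\right)\left(1-\me^{-kD_v/L_n}\right)\left(1-\me^{-D_uD_v/L_n}\right)(1+\op(1)).
\end{equation}
Because $k\ll\sqrt n$ and on $W_n^k(\varepsilon)$ we have $D_uD_v\le \mu n/\varepsilon$ together with $D_u,D_v<\mu n/(k\varepsilon)$, the products $kD_u/L_n$ and $kD_v/L_n$ are $\op(1)$ uniformly (here I use $L_n=\mu n(1+\op(1))$ on $J_n$ and $k\cdot \mu n/(k\varepsilon)/L_n = 1/(\mu\varepsilon)(1+\op(1))$ — so more care is needed: one wants $D_u\ll n/k$, which is exactly where the extra constraint $D_u<\mu n/(k\varepsilon)$ and a further truncation by a slowly growing factor, or the $\varepsilon\to 0$ limit, come in). After Taylor-expanding $1-\me^{-kD_u/L_n}=kD_u/L_n\,(1+O(kD_u/L_n))$ and likewise for $v$, the sum becomes
\begin{equation}
\Expn{c(k,W_n^k(\varepsilon))}=(1+\op(1))\frac{k^2}{k(k-1)}\sum_{(u,v)\in W_n^k(\varepsilon)}\frac{D_uD_v\left(1-\me^{-D_uD_v/L_n}\right)}{L_n^2},
\end{equation}
which, crucially, no longer depends on $k$ except through the domain of summation $W_n^k(\varepsilon)$.

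Next I would introduce, in analogy with~\eqref{eq:Mn}, the random measure that counts normalized pairs with $D_uD_v\in n\mu[a,b]$ but now truncated by $D_u,D_v<\mu n/(k\varepsilon)$ (and, as in Range~I, also lower-truncated at a level $K_1$ times the natural scale and upper-truncated at $K_2 n^{1/(\tau-1)}$ to keep the integrand bounded),
\begin{equation}
\Mn[a,b]=\frac{(\mu n)^{\tau-1}}{\log(n/k^2)\,n^2}\sum_{u,v\in[n]}\ind{D_uD_v\in n\mu[a,b],\ D_u,D_v<\mu n/(k\varepsilon),\ D_u>K_1 n^{(\tau-2)/(\tau-1)}}.
\end{equation}
Computing $\Exp{\Mn[a,b]}$ as in the Range~I proof, the pair $(D_1,D_2)$ with product in $[a,b]\mu n$ contributes an integral $\int C^2(xy)^{-\tau}$; after changing variables $u=xy$ the $y$-integral factors out a $u^{-\tau}$ against $[a\mu n,b\mu n]$ and the $x$-integral becomes $\int_{K_1 n^{(\tau-2)/(\tau-1)}}^{\,\min(K_2 n^{1/(\tau-1)},\,\mu n/(k\varepsilon))} x^{-1}\dd x$. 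The key difference from Range~I is the upper limit of this $x$-integral: when $k\gg n^{(\tau-2)/(\tau-1)}$ the binding constraint is $x<\mu n/(k\varepsilon)$ rather than $x<n^{1/(\tau-1)}$, so the $\log$ produced is $\log\!\big(\mu n/(k\varepsilon)\big)-\log\!\big(K_1 n^{(\tau-2)/(\tau-1)}\big)$, which to leading order is a constant times $\log(n/k)$; combined with the lower cutoff and the matching $x<\mu n/(k\varepsilon)$ on the other degree this produces the factor $\log(n/k^2)$ in the denominator, and one reads off $\lim_{n\to\infty}\Exp{\Mn[a,b]}=C^2\int_a^b t^{-\tau}\dd t=:\lambda[a,b]$ (with the coefficient $(3-\tau)/(\tau-1)$ of Range~I replaced by $1$, because the $x$-integral is now a pure logarithm of length comparable to the normalization, not of length $\tfrac{3-\tau}{\tau-1}\log n$). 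A second-moment computation identical in spirit to the Range~I one — the $|\{u,v,w,z\}|=4$ term vanishes by independence, the $|\{u,v,w,z\}|=3$ and $u=z,v=w$ terms are $o(1)$ after dividing by $\log^2(n/k^2)$ — gives $\Mn[a,b]\plim\lambda[a,b]$ for every $0<a<b$.

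Finally, writing the sum over $W_n^k(\varepsilon)$ as $\mu^{-\tau}n^{2-\tau}\log(n/k^2)\int_\varepsilon^{1/\varepsilon} t\,(1-\me^{-t})\,\dd\Mn(t)\,(1+\op(1))$ and applying Lemma~\ref{lem:convmeas} with the bounded continuous integrand $f(t)=t(1-\me^{-t})$ on $[\varepsilon,1/\varepsilon]$ yields
\begin{equation}
\int_\varepsilon^{1/\varepsilon} t\,(1-\me^{-t})\,\dd\Mn(t)\plim \int_\varepsilon^{1/\varepsilon} t\,(1-\me^{-t})\,\dd\lambda(t)=C^2\int_\varepsilon^{1/\varepsilon} t^{1-\tau}(1-\me^{-t})\,\dd t,
\end{equation}
and then letting $K_1\to0$, $K_2\to\infty$ after $n\to\infty$ gives the claimed limit $C^2\mu^{-\tau}\int_\varepsilon^{1/\varepsilon}t^{1-\tau}(1-\me^{-t})\,\dd t$. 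I expect the main obstacle to be the bookkeeping around the $\op(1)$ in the Taylor expansion of $1-\me^{-kD_u/L_n}$: on $W_n^k(\varepsilon)$ the ratio $kD_u/L_n$ is bounded by $1/(\mu\varepsilon)$ rather than by something tending to $0$, so one cannot simply expand uniformly for fixed $\varepsilon$. The clean way around this is to observe that the mass of $W_n^k(\varepsilon)$ with $D_u$ near the boundary $\mu n/(k\varepsilon)$ is a vanishing fraction (of relative order $\varepsilon^{\kappa}$, by the same kind of estimate used in Proposition~\ref{prop:minor}), or equivalently to insert a further truncation $D_u<\delta_n\,\mu n/k$ with $\delta_n\to0$ slowly and absorb the remainder into the $\varepsilon\to0$ error, so that on the truncated region the expansion $1-\me^{-x}=x(1+O(x))$ does hold uniformly; the rest of the argument is then a routine adaptation of the Range~I proof.
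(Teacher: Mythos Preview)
Your overall strategy---Taylor expand the two factors $1-\me^{-kD_u/L_n}$ and $1-\me^{-kD_v/L_n}$, reduce to a sum of $D_uD_v(1-\me^{-D_uD_v/L_n})/L_n^2$, and pass to a deterministic limit via a random product measure---is the right idea and is close in spirit to what the paper does. However, the justification you give for the one genuinely delicate step is not correct, and the paper handles it differently.

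The issue is exactly the one you flag: on $W_n^k(\varepsilon)$ the ratio $kD_u/L_n$ is only bounded by $1/(\mu\varepsilon)$, so the expansion $1-\me^{-x}=x(1+O(x))$ is \emph{not} uniformly valid. Your proposed remedy, that the mass near $D_u\approx \mu n/(k\varepsilon)$ is ``of relative order $\varepsilon^\kappa$'' and can be absorbed into the $\varepsilon\to 0$ error, is wrong on both counts. First, the lemma is stated for \emph{fixed} $\varepsilon$, so an $\varepsilon^\kappa$ remainder would not suffice. Second, and more importantly, the boundary strip $D_u\in[\varepsilon n/k,\,\mu n/(k\varepsilon)]$ does not carry mass $O(\varepsilon^\kappa)$ relative to the whole: its $x^{-1}$-integral has length $O(\log(1/\varepsilon))$, which is $O(1)$ for fixed $\varepsilon$, while the full region has length $\log(n/k^2)\to\infty$. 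The correct statement is that this strip contributes $O(n^{2-\tau})$ with \emph{no} logarithmic factor, hence is $o(n^{2-\tau}\log(n/k^2))$ as $n\to\infty$ for fixed $\varepsilon$. Your alternative fix with $\delta_n\to 0$ is viable, but again the remainder vanishes because of the $\log(n/k^2)$ in the denominator, not because of any $\varepsilon$-dependence.

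The paper avoids this awkwardness by an explicit three-way decomposition of $W_n^k(\varepsilon)$: region~A where $D_u\in[k/\varepsilon^2,\varepsilon n/k]$ (so $kD_u/L_n,kD_v/L_n\le \varepsilon/\mu$ and both Taylor expansions are legitimate, producing the full $n^{2-\tau}\log(n/k^2)$ main term via a measure $\Mn_1$ analogous to yours); region~B where $D_u\in[\varepsilon n/k,n/(k\varepsilon)]$ (only the $D_v$-factor is expanded, and a two-dimensional measure $\Mn_2$ gives a contribution of order $n^{2-\tau}$ with no log); and a small corner region~C (also $O(n^{2-\tau})$). Summing the three and dividing by $n^{2-\tau}\log(n/k^2)$ makes regions~B and~C disappear in the limit. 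This is more bookkeeping than your single-measure approach, but it makes precise the mechanism---loss of the $\log$ factor on the boundary---that your sketch gestures at. A secondary point: in your computation of $\Exp{\Mn[a,b]}$ the effective lower bound on $D_u$ is not $K_1 n^{(\tau-2)/(\tau-1)}$ but rather $\Theta(k\varepsilon)$, forced by the constraint $D_v<\mu n/(k\varepsilon)$ together with $D_uD_v\ge \varepsilon\mu n$; this is what makes the $x^{-1}$-integral have length $\log(n/k^2)$ rather than $\tfrac{3-\tau}{\tau-1}\log n$, and it should be made explicit.
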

	\begin{proof}
		We split the major contributing regime into three parts, depending on the values of $D_u$ and $D_v$, as visualized in Figure~\ref{fig:contpart}. 
		\begin{figure}[tb]
			\centering
			\includegraphics[width=0.5\textwidth]{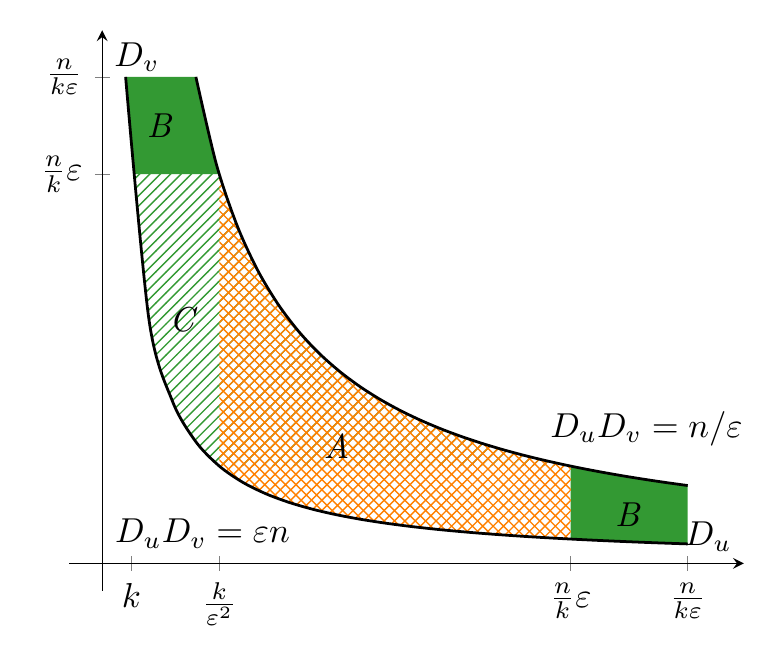}
			\caption{Contributing regime for $n^{(\tau-2)/(\tau-1)}<k<\sqrt{n}$}
			\label{fig:contpart}
		\end{figure}
				We denote the contribution to the clustering coefficient where $D_u\in[k/\varepsilon^2,\varepsilon n/k]$ (area A of Figure~\ref{fig:contpart}) by $c_1(k,W_n^k(\varepsilon))$, the contribution from $D_u$ or $D_v\in[\varepsilon n/k,n/(\varepsilon k)]$ (area B of Figure~\ref{fig:contpart}) by $c_2(k,W_n^k(\varepsilon))$ and the contribution from $D_u\in[k,k/\varepsilon^2]$ and $D_v\in[\varepsilon^3n/k,\varepsilon n/k]$ (area C of Figure~\ref{fig:contpart}) by $c_3(k,W_n^k(\varepsilon))$. We first study the contribution of area I. In this situation, $D_u,D_v<\varepsilon n/k$, so that we can Taylor expand the exponentials $\me^{-kD_u/L_n}$ and $\me^{-kD_v/L_n}$ in~\eqref{eq:ckexp}. This results in
	\begin{equation}\label{eq:c1exp}
	\begin{aligned}[b]
	\Expn{c_1(k,W_n^k(\varepsilon))} & =\frac{1}{k^2}\sum_{\substack{u,v: (D_u,D_v)\in W_n^{k}(\varepsilon),\\ D_u\in[k/\varepsilon^2,\varepsilon n/k]}} (1-\e^{-kD_u/L_n})(1-\e^{-kD_v/L_n})(1-\e^{-D_uD_v/L_n}) \\
	& = (1+o_\prob(1))\sum_{\substack{u,v: (D_u,D_v)\in W_n^{k}(\varepsilon),\\ D_u\in[k/\varepsilon^2,\varepsilon n/k]}} \frac{D_uD_v}{L_n^2}(1-\e^{-D_uD_v/L_n}).                      
	\end{aligned}
	\end{equation}
	Now we define the random measure 
	\begin{equation}
	\Mn_1[a,b] = \frac{(\mu n)^{\tau-1}}{\log(\varepsilon^3n/k^2)n^2}\sum_{u,v\in[n]}\ind{D_uD_v\in \mu n[a,b], D_u\in[k/\varepsilon^2,\varepsilon n/k]}.
	\end{equation} 
	As similar reasoning as in~\eqref{eq:convbin} shows that
	\begin{equation}
		\Mn_1[a,b]\plim C^2\int_{a}^{b}t^{-\tau}\dd t := \lambda_2[a,b].
	\end{equation}
	By~\eqref{eq:c1exp}, we can write the contribution to the expected value of $c(k)$ in this regime as
	\begin{equation}
	\begin{aligned}[b]
	\Expn{c_1(k,W_n^k(\varepsilon))} &=(1+\op(1))
	\sum_{\substack{u,v: (D_u,D_v)\in W_n^{k}(\varepsilon),\\ D_u\in[k/\varepsilon^3,\varepsilon n/k]}} \frac{D_uD_v}{L_n^2}(1-\e^{-D_uD_v/L_n}) \\
	& =(1+\op(1)) \mu^{-\tau}n^{2-\tau}\log(\varepsilon^3n/k^2)\int_{\varepsilon}^{1/\varepsilon}t(1-\me^{-t})\dd \Mn_1 (t) .                    
	\end{aligned}
	\end{equation}
	Thus, by Lemma~\ref{lem:convmeas}
	\begin{equation}
		\Expn{c_1(k,W_n^k(\varepsilon))} =(1+\op(1)) 2 \mu^{-\tau}n^{2-\tau}\log(\varepsilon^3n/k^2)\int_{\varepsilon}^{1/\varepsilon}t(1-\me^{-t})\dd \lambda_2(t).
	\end{equation}
	
	Then we study the contribution of area B in Figure~\ref{fig:contpart}. This area consists of two parts, the part where $D_u\in [\varepsilon n/k, n/(k\varepsilon)]$, and the part where $D_v\in [\varepsilon n/k, n/(k\varepsilon)]$. By symmetry, these two contributions are the same and therefore we only consider the case where $D_u\in[\varepsilon n/k, n/(k\varepsilon)]$. Then, we can Taylor expand $\me^{-D_vk/L_n}$ in~\eqref{eq:ckexp}, which yields
	\begin{equation}
	\begin{aligned}[b]
		\Expn{c_2(k,W_n^k(\varepsilon))} & =\frac{2}{k^2}\sum_{\substack{u,v: (D_u,D_v)\in W_n^k(\varepsilon),\\D_u>\varepsilon n/k}} (1-\e^{-kD_u/L_n})\frac{D_v k}{L_n}(1-\e^{-D_uD_v/L_n}) .
		\end{aligned}
	\end{equation} 
	Define the random measure
	\begin{equation}
	\Mn_2([a,b],[c,d]):=\frac{(\mu n)^{\tau-1}}{n^2} \sum_{u,v\in[n]}\ind{D_uD_v\in\mu n[a,b], D_u\in (\mu n/k)[c,d]}.
	\end{equation}
	Then we obtain
	\begin{equation}
	\begin{aligned}[b]
	\Expn{c_2(k,W_n^k(\varepsilon))} 
	& =\frac{2}{k L_n}\sum_{\substack{u,v: (D_u,D_v)\in W_n^k(\varepsilon),\\D_u>\varepsilon n/k}}\frac{L_n}{D_u k}(1-\e^{-kD_u/L_n})\frac{D_uD_v k}{L_n}(1-\e^{-D_uD_v/L_n}) \\
	& =2\mu^{-\tau}n^{2-\tau}\int_{\varepsilon}^{1/\varepsilon}\int_{\varepsilon}^{1/\varepsilon}\frac{t_1}{t_2}(1-\me^{-t_1})(1-\me^{-t_2})\dd \Mn_2(t_1,t_2) (1+o_\prob(1)).           
	\end{aligned}
	\end{equation}	
	Again, using a first moment method and a second moment method, we can show that
	\begin{equation}
	\Mn_2([a,b],[c,d])\plim C^2\int_{a}^{b}t^{-\tau}\dd t \int_{c}^{d}\frac 1v \dd v=: \lambda [a,b]\nu[c,d].
	\end{equation}
	Very similarly to the proof of Lemma~\ref{lem:convmeas2} we can show that
	\begin{equation}
	\begin{aligned}[b]
	\int_{\varepsilon}^{1/\varepsilon}\int_{\varepsilon}^{1/\varepsilon}\frac{t_1}{t_2}(1-\me^{-t_1})(1-\me^{-t_2})\dd \Mn_2(t_1,t_2)\plim           \int_{\varepsilon}^{1/\varepsilon}\int_{\varepsilon}^{1/\varepsilon}\frac{t_1}{t_2}(1-\me^{-t_1})(1-\me^{-t_2})\dd \lambda(t_1)\dd \nu(t_2) .
	\end{aligned}
	\end{equation}
	The latter integral can be written as
	\begin{equation}
	\begin{aligned}[b]
	\int_{\varepsilon}^{1/\varepsilon} \int_{\varepsilon}^{1/\varepsilon}\frac{t_1}{t_2}(1-\me^{-t_1})(1-\me^{-t_2})\dd \lambda(t_1)\dd \nu(t_2)  & = C^2\int_{\varepsilon}^{1/\varepsilon}\int_{\varepsilon}^{1/\varepsilon}t_2^{-2}t_1^{1-\tau}(1-\me^{-t_2})(1-\me^{-t_1})\dd t_1\dd t_2\\
	& = C^2\int_{\varepsilon}^{1/\varepsilon}\frac{1}{t_2^2}(1-\me^{-t_2})\dd t_2\int_{\varepsilon}^{1/\varepsilon}t_1^{1-\tau}(1-\me^{-t_1})\dd t_1.
	\end{aligned}
	\end{equation}
	The left integral results in
	\begin{equation}
	\begin{aligned}[b]
	\int_{\varepsilon}^{1/\varepsilon}\frac{1}{t_2^2}(1-\me^{-t_2})\dd t_2 & = \left[\frac{\me^{-t_2}-1}{t_2}+\text{Ei}(t_2)\right]_{t_2=\varepsilon}^{t_2=1/\varepsilon}\\
	& =\varepsilon(\me^{-1/\varepsilon}-1)-\frac{\me^{-\varepsilon}-1}{\varepsilon}+\int_{1/\varepsilon}^{\infty}\frac{1}{u}\me^{-u}\dd u-\log(\varepsilon)-\sum_{j=1}^{\infty}\frac{\varepsilon^k}{k! k}\\
	& =\log\left(\frac{1}{\varepsilon}\right) +\int_{1/\varepsilon}^{\infty}\frac{1}{u}\me^{-u}\dd u+\varepsilon(\me^{-1/\varepsilon}-1)-\frac{\me^{-\varepsilon}-1}{\varepsilon}-\sum_{j=1}^{\infty}\frac{\varepsilon^k}{k! k}\\
	&=\log\left(\frac{1}{\varepsilon}\right) +f(\varepsilon),
	\end{aligned}
	\end{equation}
	where Ei denotes the exponential integral and we have  used the Taylor series for the exponential integral. We can show that $f(\varepsilon)<\infty$ for fixed $\varepsilon\in(0,\infty)$. In fact, $f(\varepsilon)\to 1$ as $\varepsilon\to 0$.
	
	Finally, we study the contribution of area III in Figure~\ref{fig:contpart}, where $D_u\in[k,k/\varepsilon^2]$ and $D_v\in [n/k\varepsilon^3,n/k\varepsilon]$. In this regime, $D_uk\ll n$ and $D_vk\ll n$so that we can Taylor expand the first two exponentials in~\eqref{eq:ckexp}. This results in
	\begin{equation}
		\Expn{c_3(k,W_n^k(\varepsilon))} =(1+o(1))\sum_{\substack{u,v: D_v\in[\varepsilon^3 n/k,\varepsilon n/k], D_uD_v>\varepsilon n,\\ D_u\in[k,k/\varepsilon^2]}}(1-\e^{-D_uD_v/L_n})\frac{D_uD_v}{L_n} .
	\end{equation}
	We define the random measure
	\begin{equation}
		\Mn_3([a,b],[c,d]):=\frac{(\mu n)^{\tau-1}}{n^2}\sum_{u,v}\ind{D_u\in \sqrt{\mu}k[a,b],D_v\in (\sqrt{\mu} n/k)[c,d]}.
	\end{equation}
	Then,
	\begin{equation}
	\begin{aligned}[b]
	\Expn{c_3(k,W_n^k(\varepsilon))} 
	& =(1+o_\prob(1)) n^{2-\tau}\mu^{-\tau}\int_{1}^{1/\varepsilon^2}\int_{\varepsilon/t_1}^{\varepsilon}(t_1t_2)(1-\me^{-t_1t_2})\dd \Mn_3(t_1,t_2) .                     
	\end{aligned}
	\end{equation}
	Again using a first moment method and a second moment method we can show that
	\begin{equation}
		\Mn_3([a,b],[c,d])\plim C^2\int_{a}^{b}u^{-\tau}\dd u\int_{c}^{d}v^{-\tau}\dd v.
	\end{equation}
In a similar way, we can show that for $B\subseteq [1,1/\varepsilon^2]\times [\varepsilon^3,\varepsilon]$, $\Mn_3(B)\plim C^2\int \int_B(uv)^{-\tau}\dd u\dd v$.  
	Thus, by Lemma~\ref{lem:convmeas2},
	\begin{equation}
		\int_{1}^{1/\varepsilon^2}\int_{\varepsilon/t_1}^{\varepsilon}(t_1t_2)(1-\me^{-t_1t_2})\dd \Mn_3(t_1,t_2)\plim C^2\int_{1}^{1/\varepsilon^2}\int_{\varepsilon/x}^{\varepsilon}(xy)^{1-\tau}(1-\me^{-xy})\dd y \dd x.
	\end{equation}
We evaluate the latter integral as
	\begin{equation}
		\begin{aligned}[b]
			 \int_{1}^{1/\varepsilon^2}\int_{\varepsilon/x}^{\varepsilon}(xy)^{1-\tau}(1-\me^{-xy}) \dd y \dd x& =  \int_{1}^{1/\varepsilon^2}\int_{\varepsilon}^{\varepsilon v}\frac{1}{v}u^{1-\tau}(1-\me^{-u})\dd u\dd v \\
			 & =  \int_{\varepsilon}^{1/\varepsilon}\int_{u/\varepsilon}^{1/\varepsilon^2}\frac{1}{v}u^{1-\tau}(1-\me^{-u})\dd v\dd u\\
			 & =  \log\left(\frac{1}{\varepsilon}\right)\int_{\varepsilon}^{1/\varepsilon}u^{1-\tau}(1-\me^{-u})\dd u\\
			 &\quad+ \int_{\varepsilon}^{1/\varepsilon}\log\left(\frac{1}{u}\right)u^{1-\tau}(1-\me^{-u})\dd u.
		\end{aligned}
	\end{equation}
Summing all three contributions to the expectation under $\expec_n$ of the clustering coefficient yields
	\begin{equation}
	\begin{aligned}[b]
	\Expn{c(k,W_n^k(\varepsilon))} &= \Expn{c_1(k,W_n^k(\varepsilon))}+\Expn{c_2(k,W_n^k(\varepsilon))} +\Expn{c_3(k,W_n^k(\varepsilon))} \\
	&=  C^2\mu^{-\tau} n^{2-\tau}(1+\op(1))\Bigg[\int_{\varepsilon}^{1/\varepsilon}t_1^{1-\tau}(1-\me^{-t_1})\dd t_1\\
	&\quad \times \left(\log\left(\frac{n\varepsilon^2}{k^2}\right)+3\log\left(\frac{1}{\varepsilon}\right)+2 f(\varepsilon)\right) +\int_{\varepsilon}^{1/\varepsilon}\log\left(\frac{1}{u}\right)u^{1-\tau}(1-\me^{-u})\dd u\Bigg]\\
	& =C^2(1+\op(1))\mu^{-\tau} n^{2-\tau}\Bigg[\int_{\varepsilon}^{1/\varepsilon}t_1^{1-\tau}(1-\me^{-t_1})\dd t_1\left(\log\left(\frac{n}{k^2}\right)+2 f(\varepsilon)\right)\\
	&\quad +\int_{\varepsilon}^{1/\varepsilon}\log\left(\frac{1}{u}\right)u^{1-\tau}(1-\me^{-u})\dd u\Bigg].
	\end{aligned}
	\end{equation}
Dividing by $n^{2-\tau}\log(n/k^2)$ and taking the limit of $n\to\infty$ then shows that
	\begin{equation}
	\frac{\Expn{c(k,W_n^k(\varepsilon))}}{n^{2-\tau}\log(n/k^2)}\plim C^2\mu^{-\tau}\int_{\varepsilon}^{1/\varepsilon}x^{1-\tau}(1-\me^{-x})\dd x.
	\end{equation}
	\end{proof}
	

	\begin{lemma}\label{lem:convklarge}
		\textup{(Range III)} For $k\gg \sqrt{n}$,
		\begin{equation}
			\frac{\Expn{c(k,W_n^k(\varepsilon))}}{n^{5-2\tau}k^{2\tau-6}}\plim C^2\mu^{3-2\tau}\left(\int_{\varepsilon}^{1/\varepsilon}t^{1-\tau}(1-\me^{-t})\dd t\right)^2.
		\end{equation}
	\end{lemma}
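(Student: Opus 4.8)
The plan is to mimic the structure of the proof of Lemma~\ref{lem:convksmall}, but now exploiting that in Range III the single-edge constraint is active on the edges from the degree-$k$ vertex rather than on the edge between $u$ and $v$. In this range the contributing regime $W_n^k(\varepsilon)$ forces $D_u,D_v\in[\varepsilon,1/\varepsilon]\mu n/k$, so that $kD_u/L_n$ and $kD_v/L_n$ are both $\Theta(1)$, while $D_uD_v/L_n=\Theta(n/k^2)=o(1)$ because $k\gg\sqrt n$. Hence in the formula~\eqref{eq:ckexp} for $\Expn{c(k,W_n^k(\varepsilon))}$ I would Taylor-expand only the \emph{third} factor, using $1-\me^{-D_uD_v/L_n}=D_uD_v/L_n+O((D_uD_v/L_n)^2)$, while keeping the first two factors $1-\me^{-kD_u/L_n}$, $1-\me^{-kD_v/L_n}$ intact. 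This gives, on $J_n$,
\begin{equation}
\Expn{c(k,W_n^k(\varepsilon))}=(1+\op(1))\frac{1}{k(k-1)}\sum_{u,v:\,(D_u,D_v)\in W_n^k(\varepsilon)}(1-\me^{-kD_u/L_n})(1-\me^{-kD_v/L_n})\frac{D_uD_v}{L_n}.
\end{equation}

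Next I would introduce a product random measure capturing the joint empirical distribution of the \emph{pair} of rescaled degrees,
\begin{equation}
\Mn([a,b],[c,d]):=\frac{(\mu n)^{\tau-1}k^{\,4-2\tau}}{n^2}\sum_{u,v\in[n]}\ind{D_u\in(\mu n/k)[a,b],\ D_v\in(\mu n/k)[c,d]},
\end{equation}
with the exact power of $k$ chosen so that the expectation stays order one; a first-moment computation using~\eqref{D-tail} gives $\Exp{\Mn([a,b],[c,d])}\to C^2\int_a^b u^{-\tau}\dd u\int_c^d v^{-\tau}\dd v$, and a second-moment computation (splitting according to how many of the four indices coincide, exactly as in Lemma~\ref{lem:convksmall}) shows the variance is $\op(1)$, so that $\Mn\plim\lambda\otimes\lambda$ where $\dd\lambda(t)=C\,t^{-\tau}\dd t$. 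Rewriting the double sum as an integral against $\Mn$ over $[\varepsilon,1/\varepsilon]^2$ of the bounded continuous function $(t_1,t_2)\mapsto t_1t_2(1-\me^{-t_1})(1-\me^{-t_2})$ — after pulling out the scalar prefactor, which works out to $\mu^{3-2\tau}n^{5-2\tau}k^{2\tau-6}$ — and invoking the two-dimensional convergence Lemma~\ref{lem:convmeas2}, I obtain
\begin{equation}
\frac{\Expn{c(k,W_n^k(\varepsilon))}}{n^{5-2\tau}k^{2\tau-6}}\plim C^2\mu^{3-2\tau}\left(\int_{\varepsilon}^{1/\varepsilon}t^{1-\tau}(1-\me^{-t})\dd t\right)^2,
\end{equation}
where the square arises because the integrand factorizes over $t_1$ and $t_2$ and the measure is a product.

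The routine parts are the Taylor expansion (justified because $D_uD_v/L_n=O(n/k^2)=o(1)$ uniformly on $W_n^k(\varepsilon)$, together with the error bound from~\cite[Lemma 3.1]{hofstad2017d} as used in Lemma~\ref{lem:expk}) and the bookkeeping of powers of $n$ and $k$ in the prefactor. The step I expect to be the main obstacle is the second-moment bound for $\Mn$: unlike the one-dimensional case, the sum ranges over four indices $u,v,w,z$, and one must check that every configuration of coincidences among them contributes $\op(1)$ after the normalization by $k^{4-2\tau}/n^2$. The potentially dangerous terms are those with $u=w$ (the two pairs share their degree-$k$-adjacent vertex) or $v=z$; these produce three-index sums of the form $n^{-3}k^{\,4-2\tau}\cdot(\text{const})\cdot n\cdot(n/k)^{\text{something}}$, and one has to use $k\gg\sqrt n$ together with $\tau<3$ to see that the resulting power of $n/k^2$ or $1/k$ is negative. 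Once that is verified the rest is a direct transcription of the Range~I argument, and letting any auxiliary truncation constants $K_1\to0,K_2\to\infty$ (if needed to handle $D_u$ outside $[\varepsilon,1/\varepsilon]\mu n/k$ but still in $W_n^k(0)$) closes the proof.
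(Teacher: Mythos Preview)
Your proposal is correct and follows the paper's strategy: Taylor-expand only the $1-\me^{-D_uD_v/L_n}$ factor (since $D_uD_v/L_n=O(n/k^2)=o(1)$ on $W_n^k(\varepsilon)$), then pass to an empirical-measure integral and apply a convergence lemma. The one substantive difference is that you set up a \emph{two}-dimensional measure and anticipate a four-index variance computation as the main obstacle, whereas the paper observes that after the Taylor expansion both the summand $(1-\me^{-kD_u/L_n})(1-\me^{-kD_v/L_n})D_uD_v/L_n$ and the Range-III constraint set $W_n^k(\varepsilon)=\{D_u\in[\varepsilon,1/\varepsilon]\mu n/k\}\times\{D_v\in[\varepsilon,1/\varepsilon]\mu n/k\}$ are products, so the double sum is exactly the square of a single sum. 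The paper therefore works with the one-dimensional measure $\Nn_1[a,b]=\frac{(\mu n)^{\tau-1}k^{1-\tau}}{n}\sum_u\ind{D_u\in(\mu n/k)[a,b]}$, whose convergence to $C\int_a^b t^{-\tau}\dd t$ is immediate from the law of large numbers for the binomial count, and then invokes the one-dimensional Lemma~\ref{lem:convmeas}. This makes your anticipated ``main obstacle'' disappear entirely. As a minor point, your stated normalization $(\mu n)^{\tau-1}k^{4-2\tau}/n^2$ is off (the correct product normalization is $(\mu n)^{2(\tau-1)}k^{2-2\tau}/n^2$), though you flag this as bookkeeping to be fixed.
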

	\begin{proof}
		When $k\gg\sqrt{n}$, the major contribution is from $u$, $v$ with $D_u,D_v=\Theta(n/k)$, so that $D_uD_v=o(n)$. Therefore, we can Taylor expand the exponential $\me^{-D_uD_v/L_n}$ in~\eqref{eq:ckexp}. Thus, we can write the expected value of $c(k)$ as  
	\begin{equation}\label{eq:cklargeexp}
	\begin{aligned}[b]
	\Expn{c(k,W_n^k(\varepsilon))} & =\frac{1}{k^2}\sum_{{u,v: D_u,D_v\in W_n^k(\varepsilon)}} (1-\e^{-kD_u/L_n})(1-\e^{-kD_v/L_n})(1-\e^{-D_uD_v/L_n})        \\
	& = \frac{1}{k^2}\sum_{u,v: D_u,D_v\in W_n^k(\varepsilon)} (1-\e^{-kD_u/L_n})(1-\e^{-kD_v/L_n})\frac{D_uD_v}{L_n}(1+\op(1)).
	\end{aligned}
	\end{equation}
	Define the random measure 
	\begin{equation}
		\Nn_1[a,b]=\frac{(\mu n)^{\tau-1}}{n}k^{1-\tau} \sum_{u\in [n]} \indic{D_u\in (\mu n/k)[a,b]},
	\end{equation}
	and let $\Nn$ be the product measure $\Nn_1\times\Nn_1$. 
	Since all degrees are i.i.d.\ samples from a power-law distribution, the number of vertices with degrees in interval $[q_1,q_2]$ is distributed as a $\text{Bin}(n,C(q_1^{1-\tau}-q_2^{1-\tau}))$ random variable. 
	Therefore,
	\begin{equation}\label{eq:lambda}
	\begin{aligned}[b]
	\Nn_1\left([a,b]\right) & =\frac{(\mu n)^{\tau-1}k^{1-\tau}}{n}\abs{\{i: D_i\in(\mu n/k)[a,b]\}}\plim \lim_{n\to\infty}(\mu n)^{\tau-1}k^{1-\tau}\Prob{D_i\in(\mu n/k)[a,b]} \\
	& = (\mu n)^{\tau-1}k^{1-\tau}\int_{a\mu n/k}^{b\mu n/k} Cx^{-\tau}\dd x=C\int_{a}^{b}t^{-\tau}\dd t:=\lambda([a,b]),                             
	\end{aligned}
	\end{equation}
	where we have used the substitution $t=xk/(\mu n)$. 
	Then,
	\begin{equation}
	\begin{aligned}[b]
	\sum_{u,v: D_u,D_v\in W_n^k(\varepsilon)} & (1-\e^{-kD_u/L_n})(1-\e^{-kD_v/L_n})\frac{D_uD_v}{L_n}                                                                                                            \\
	& =\frac{L_n}{k^2} \sum_{u,v: D_u,D_v\in W_n^k(\varepsilon)}  (1-\e^{-kD_u/L_n})(1-\e^{-kD_v/L_n})\frac{D_u k}{L_n}\frac{D_v k}{L_n}                                                                                                            \\
	& =(1+\op(1))\mu^{3-2\tau} n^{5-2\tau}k^{2\tau-4}\int_{\varepsilon}^{1/\varepsilon}\int_{\varepsilon}^{1/\varepsilon} t_1t_2(1-\me^{-t_1})(1-\me^{-t_2})\dd \Nn(t_1,t_2).                   
	\end{aligned}
	\end{equation}
	Combining this with~\eqref{eq:cklargeexp} yields
	\begin{equation}\label{eq:cklarge1}
	\begin{aligned}[b]
	\frac{\Expn{c(k,W_n^k(\varepsilon))}}{n^{5-2\tau}k^{2\tau-4}} & =(1+\op(1))\mu^{2\tau-3}\int_{\varepsilon}^{1/\varepsilon}\int_{\varepsilon}^{1/\varepsilon} t_1t_2(1-\me^{-t_1})(1-\me^{-t_2})\dd \Nn(t_1,t_2) \\
	& =(1+\op(1))\mu^{2\tau-3}\left(\int_{\varepsilon}^{1/\varepsilon} t_1(1-\me^{-t_1})\dd \Nn_1(t_1)\right)^2.                                      \\
	\end{aligned}
	\end{equation}
	We then use Lemma~\ref{lem:convmeas}, which shows that 
	\begin{equation}\label{eq:cklarge2}
	\int_{\varepsilon}^{1/\varepsilon} t_1^{1-\tau}(1-\me^{-t_1})\dd \Nn_1(t_1) \plim C\int_{\varepsilon}^{1/\varepsilon} t_1(1-\me^{-t_1})\dd \lambda(t_1) = C \int_{\varepsilon}^{1/\varepsilon} t_1^{1-\tau}(1-\me^{-t_1})\dd t_1.
	\end{equation}
	Then, we can conclude from~\eqref{eq:cklarge1} and~\eqref{eq:cklarge2} that
	\begin{equation}
	\frac{\Expn{c(k,W_n^k(\varepsilon)}}{n^{5-2\tau}k^{2\tau-6}}\plim C^2\mu^{3-2\tau}\left( \int_{\varepsilon}^{1/\varepsilon} t_1^{1-\tau}(1-\me^{-t_1})\dd t_1\right)^2.
	\end{equation}
	\end{proof}

	\subsection{Variance of the local clustering coefficient}\label{sec:var}
	In the following lemma, we give a bound on the variance of $c(k,W_n^k(\varepsilon))$:
	\begin{lemma}\label{lem:condvar}
		For all ranges, under $J_n$,
		\begin{equation}
			\frac{\Varn{c(k,W_k^k(\varepsilon))}}{\Expn{c(k,W_n^k(\varepsilon))}^2}\plim  0.
		\end{equation}
	\end{lemma}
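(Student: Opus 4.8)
The plan is a Chebyshev argument resting on the conditional-expectation analysis of Sections~\ref{sec:expmajor}--\ref{sec:conv}. Working on the event $J_n$ throughout, write the unnormalised count $S_n:=\sum\indic{E_{(w,u,v)}}$, the sum running over ordered triples of distinct vertices with $\Der_w=k$ and $(u,v)\in W_n^k(\varepsilon)$, where $E_{(w,u,v)}:=\{\hat X_{wu}=\hat X_{wv}=\hat X_{uv}=1\}$; then $c(k,W_n^k(\varepsilon))=S_n/(N_kk(k-1))$, so
\begin{equation}
\frac{\Varn{c(k,W_n^k(\varepsilon))}}{\Expn{c(k,W_n^k(\varepsilon))}^{2}}=\frac{\Varn{S_n}}{\Expn{S_n}^{2}}.
\end{equation}
By Lemma~\ref{lem:expk} and Section~\ref{sec:conv}, $\Expn{S_n}=N_kk(k-1)\Expn{c(k,W_n^k(\varepsilon))}$ has order $N_kk^{2}f(k,n)$, which diverges (together with $N_k$) in the regimes where the statement is used. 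Granting this, it suffices to prove $\Varn{S_n}=\op(\Expn{S_n}^{2})$.

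First I would expand $\Varn{S_n}=\sum_{T_1,T_2}\Cov_n(\indic{E_{T_1}},\indic{E_{T_2}})$ over ordered pairs of triples and classify the pairs by the overlap $V(T_1)\cap V(T_2)$, also recording whether a shared vertex is the degree-$k$ vertex. The diagonal $T_1=T_2$ gives $\Cov_n\le\Probn{E_{T}}$, hence a block bounded by $\Expn{S_n}=\op(\Expn{S_n}^{2})$. For vertex-disjoint pairs I would use a multivariate version of the factorisation estimate already used to prove Lemma~\ref{lem:expk} (the $\me^{-D_iD_j/L_n}$ approximation, cf.~\cite[Lemma 3.1]{hofstad2017d}): since $T_1$ involves only finitely many half-edge groups, removing it perturbs the pairing law of the remaining half-edges by a factor $1+\op(n^{-(\tau-2)/(\tau-1)})$ uniformly over degrees in $W_n^k(\varepsilon)$, so that $\Probn{E_{T_1}\cap E_{T_2}}=\Probn{E_{T_1}}\Probn{E_{T_2}}(1+\op(n^{-\delta}))$ for some $\delta>0$; summing over all disjoint pairs yields a contribution $\op(n^{-\delta})\big(\sum_T\Probn{E_T}\big)^{2}=\op(\Expn{S_n}^{2})$.

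The heart of the matter is the overlapping pairs, which I would bound via $\Cov_n\le\Probn{E_{T_1}\cap E_{T_2}}\le(1+\op(1))\prod(1-\me^{-D_iD_j/L_n})$, the product over the edges of the merged graph $T_1\cup T_2$ (legitimate because conditioning on some of the required edges being present can only make the remaining ones less likely, the half-edges getting used up, up to the usual erasure corrections). When $T_1,T_2$ share exactly one vertex the merged graph has five vertices, so the corresponding sum over $(T_1,T_2)$ factorises, up to that vertex: if the shared vertex is the degree-$k$ one, the block equals $N_k\big(\sum_{(u,v)\in W_n^k(\varepsilon)}g_n(k,D_u,D_v)\big)^{2}(1+\op(1))=(1+\op(1))\Expn{S_n}^{2}/N_k=\op(\Expn{S_n}^{2})$ since $N_k\to\infty$; if the shared vertex is a non-$k$ vertex, the ``extra'' free summation is estimated range by range with Lemmas~\ref{lem:expind} and~\ref{lem:conddeg} exactly as in Section~\ref{sec:conv}, again producing an $\op(\Expn{S_n}^{2})$ bound. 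Pairs sharing at least two vertices (or a whole edge, or being equal as vertex sets but not as ordered triples) have at most four free summation indices and are handled by the same substitution, hence are also negligible.

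I expect the main obstacle to be precisely this overlapping-pair estimate: one has to verify, separately in the three ranges of $k$, that the gain from the missing degree of freedom genuinely dominates the combinatorial count of overlapping pairs --- equivalently that $N_k\to\infty$ and $\Expn{S_n}\to\infty$ in the relevant regimes --- and to control the residual dependence between $\triangle(w_1,\cdot)$ and $\triangle(w_2,\cdot)$ for $w_1\ne w_2$ induced by the single matching and by the erasure of multi-edges. This is the step requiring the careful per-range bookkeeping of Section~\ref{sec:conv}; by contrast the diagonal and disjoint blocks are routine.
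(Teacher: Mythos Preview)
Your plan is essentially the paper's own: expand the conditional variance as a double sum of covariances over pairs of triangles, stratify by the overlap pattern, use the $g_n$--factorisation for the (nearly) disjoint pairs, and do explicit bounds for the edge-sharing pairs. The paper organises the stratification by $r=\abs{\{i,j,u,v,w,z\}}\in\{3,4,5,6\}$, which is the same decomposition you describe in different language.

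There is one point where your outline diverges from the paper and would actually fail. For pairs sharing exactly the degree-$k$ vertex (your ``shared vertex is the degree-$k$ one''), you bound $\Cov_n\le \Probn{E_{T_1}\cap E_{T_2}}\approx g_n\cdot g_n$ and conclude the block is $\Expn{S_n}^2/N_k$, which is $\op(\Expn{S_n}^2)$ \emph{only if} $N_k\to\infty$. In Range~III (and already for large $k$ in Range~II) $N_k$ need not diverge; the paper merely assumes $N_k\ge 1$, noting that otherwise the variance is trivially zero. The paper avoids this issue by treating this case ($r=5$) exactly like $r=6$: since the two triangles share \emph{no edge}, the same asymptotic factorisation from \cite[Lemma~3.1]{hofstad2017d} gives $\Cov_n=\op(g_n\cdot g_n)$ rather than $(1+\op(1))g_n\cdot g_n$, and the block is $\op(\Expn{S_n}^2/N_k)=\op(\Expn{S_n}^2)$ for any $N_k\ge 1$. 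You already have this tool for the vertex-disjoint block; apply it here too and the reliance on $N_k\to\infty$ disappears.

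The edge-sharing pairs ($r=4$) are, as you anticipate, the real work. The paper does not argue by ``missing degree of freedom'' alone; it uses the crude bound $\Probn{\hat X_{ij}=1}\le \min(1,D_iD_j/L_n)$ on each edge of $T_1\cup T_2$, then exploits the explicit constraints of $W_n^k(\varepsilon)$ (either $D_uD_v\le n/\varepsilon$ in Ranges~I--II, or $D_u,D_v\le n/(k\varepsilon)$ in Range~III) to reduce to moment bounds handled by Lemmas~\ref{lem:conddeg} and~\ref{lem:expind}, separately in each range and separately for the two ways an edge can be shared (through the degree-$k$ vertex or not). Your sentence ``at most four free summation indices, handled by the same substitution'' is the right idea but does not yet contain the computation; this is where most of the proof's length lies.
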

\begin{proof}
	We will analyze the variance in a very similar way as we have analyzed the expected value of $c(k)$ conditioned on the degrees in Section~\ref{sec:expmajor}.
	We can write the variance of $c(k,W_n^k(\varepsilon))$ as
	\begin{equation}\label{eq:varcond}
		\begin{aligned}
			&\Varn{c(k,W_n^k(\varepsilon))}\\
			\quad&= \frac{1}{k^2(k-1)^2N_k^2}\sum_{i,j\colon \Der_i,\Der_j=k}\sum_{(u,v),(w,z)\in W_n^k(\varepsilon)}\Probn{\triangle_{i,u,v}\triangle_{j,w,z}}-\Probn{\triangle_{i,u,v}}\Probn{\triangle_{j,w,z}}.
		\end{aligned}
	\end{equation}
	Equation~\eqref{eq:varcond} splits into various cases, depending on the size of $\{i,j,u,v,w,z\}$. We denote the contribution of $\abs{\{i,j,u,v,w,z\}}=r$ to the variance by $V^{\sss{(r)}}(k)$. 
	We first consider $V^{\sss{(6)}}(k)$.
	By a similar reasoning as~\eqref{eq:probtriang}
	\begin{equation}
		\begin{aligned}[b]
		\Varn{c(k,W_n^k(\varepsilon))} &  = \frac{1}{N_k^2k^2(k-1)^2}\sum_{i,j:\Der_i,\Der_j=k}\sum_{(u,v),(w,z)\in W_n^k(\varepsilon)}g_n(k,D_u,D_v)g_n(k,D_w,D_z)(1+\op(1)) \\
		&\qquad\quad -  g_n(k,D_u,D_v)g_n(k,D_w,D_z)(1+\op(1)) \\
		& = \sum_{(u,v),(w,z)\in W_n^k(\varepsilon)}\op\left(\frac{g_n(k,D_u,D_v)g_n(k,D_w,D_z)}{k^2(k-1)^2}\right)	\\
		& = \op\left(\Expn{c(k,W_n^k(\varepsilon))}^2\right),
		\end{aligned}
	\end{equation}
	where we have again replaced $g_n(D_i,D_u,D_v)$ by $g_n(k,D_u,D_v)$ because of~\eqref{eq:gner}.
	 Since there are no overlapping edges when $\{i,j,u,v,w,z\}=5$, $V^{\sss{(5)}}(k)$ can be bounded similarly. This already shows that the contribution to the variance from 5 or 6 different vertices involved is small in all three ranges of $k$.
	
	We then consider the contribution from $V^{\sss{(4)}}$, which is the contribution from two triangles where one edge overlaps. We show that these types of overlapping triangles are rare, so that their contribution to the variance is small.  If for example $i=j$ and $u=z$, then one edge from the vertex of degree $k$ overlaps with another triangle. To bound this contribution, we use that $\Probn{\hat{X}_{ij}=1}\leq \min(1,\frac{D_iD_j}{L_n})$. Then we can bound the summand in~\eqref{eq:varcond} as
	\begin{equation}\label{eq:ptriangij}
		\begin{aligned}[b]
			\Probn{\triangle_{i,u,v}\triangle_{i,w,u}}&-\Probn{\triangle_{i,u,v}}\Probn{\triangle_{i,w,u}}\\
			& \leq \Probn{\triangle_{i,u,v}\triangle_{i,w,u}}\\
			& \leq \min\left(1,\frac{kD_u}{L_n}\right)\min\left(1,\frac{kD_v}{L_n-2}\right)\min\left(1,\frac{D_uD_v}{L_n-4}\right)\\
			&\quad \times \min\left(1,\frac{kD_w}{L_n-6}\right)\min\left(1,\frac{D_wD_u}{L_n-8}\right) \\
			& = (1+\bigO{n^{-1}}) \min\left(1,\frac{kD_u}{L_n}\right)\min\left(1,\frac{kD_v}{L_n}\right)\min\left(1,\frac{D_uD_v}{L_n}\right)\\
			&\quad \times \min\left(1,\frac{kD_w}{L_n}\right)\min\left(1,\frac{D_wD_u}{L_n}\right).                      
		\end{aligned}
	\end{equation}
	We first consider $k$ in Ranges I or II. For the terms involving $k$ we bound this by taking the second term of the minimum, while we bound $\min(D_uD_v/L_n,1)\leq 1$
	\begin{equation}
	\begin{aligned}[b]
	\Probn{\triangle_{i,u,v}\triangle_{i,w,u}}-\Probn{\triangle_{i,u,v}}\Probn{\triangle_{i,w,u}}
	& \leq (1+\bigO{n^{-1}}) \frac{k^3D_uD_vD_w}{L_n^3}\leq O(1)\varepsilon^{-1}\frac{k^3D_w}{L_n^2},
	\end{aligned}
	\end{equation}
	where we used that $D_uD_v<n/\varepsilon$.
	Therefore, the contribution to the variance in this situation can be bounded by
	\begin{equation}
	\begin{aligned}[b]
	\frac{k^3}{k^4 N_k^2}\sum_{i:D_i=k}\sum_{(u,v),(w,u)\in W_n^k(\varepsilon)}\frac{\varepsilon^{-1}D_w}{L_n^2} 
	& = \frac{1}{k N_k}\sum_{(u,v),(w,u)\in W_n^k(\varepsilon)}\frac{\varepsilon^{-1}D_w}{L_n^2} \\
	& \leq   \frac{\varepsilon^{-1}}{k N_k}\sum_u \frac{n}{\varepsilon D_u}\Probn{\mathcal{D}_w>\varepsilon n/D_u\mid D_u}^2 \\
	& \leq K(\varepsilon) \bigOp{\frac{1}{nk^{1-\tau}}\sum_u \left(\frac{n}{D_u}\right)^{3-2\tau}}\\
	& \leq K(\varepsilon)\bigOp{n^{3-2\tau}k^{\tau-1}n^{(\tau-2)/(\tau-1)}},
	\end{aligned}
	\end{equation}
	where we used Lemma~\ref{lem:expind}. Here $K(\varepsilon)$ is a constant only depending on $\varepsilon$. Since $n^{(\tau-2)/(\tau-1)}k^{\tau-1}=O(n)$ when $k<\sqrt{n}$ and $\tau\in(2,3)$, we have proven that this contribution is small enough. 
	Now we consider the contribution from triangles that share the edge between vertices $u$ and $v$. Using a similar reasoning as in~\eqref{eq:ptriangij}, the contribution from the case $i\neq j$ and $u=z$ and $v=w$ can be bounded as
	\begin{equation}
	\begin{aligned}[b]
	\frac{1}{k^4N_k^2}\sum_{i,j:D_i,D_j=k}&\sum_{(u,v)\in W_n^k(\varepsilon)}\Probn{\triangle_{i,u,v}\triangle_{j,v,w}}-\Probn{\triangle_{i,u,v}}\Probn{\triangle_{j,v,w}}\\
	&  \leq \sum_{(u,v)\in W_n^k(\varepsilon)}\frac{k^4D_u^2D_v^2}{k^4L_n^4}\leq \varepsilon^{-2}\Probn{(\mathcal{D}_u,\mathcal{D}_v)\in W_n^k(\varepsilon)}\\
	& = \varepsilon^{-2} \bigOp{n^{1-\tau}\log(n)},
	\end{aligned}
	\end{equation}
	where we used Lemma~\ref{lem:conddeg}. Since $n^{1-\tau}\log(n)=o(n^{4-2\tau}\log^2(n))$ for $\tau\in(2,3)$, this shows that this contribution is small enough.
	 
	When $k$ is in Range III, we use similar bounds for $V^{\sss(4)}$, now using that $D_u,D_v,D_w< \varepsilon^{-1}n/k$. If $N_k=0$, then by definition $\Varn{c(k)}=0$. Therefore, we only consider the case $N_k\geq 1$. 
	Again, we start by considering the case $i=j$ and $u=z$. We can use~\eqref{eq:ptriangij}, where we use that $D_uD_v$ and $D_uD_w<n^2/(k\varepsilon)^2$, and we take 1 for the other minima. This yields
	\begin{equation}
	\Probn{\triangle_{i,u,v}\triangle_{i,w,u}}-\Probn{\triangle_{i,u,v}}\Probn{\triangle_{i,w,u}}\leq O(n^2)k^{-4}\varepsilon^{-4}.
	\end{equation}
	Thus, the contribution to the variance from this case can be bounded as
	\begin{equation}
	\begin{aligned}[b]
	\frac{1}{k^4N_k}\sum_{(u,v),(u,w)\in W_n^k(\varepsilon)}O(n^2)k^{-4}\varepsilon^{-4}
	& \leq\frac{1}{k^4}\bigOp{n^{5}k^{-8}\varepsilon^{-4}\Prob{D>n/(\varepsilon k)}^3}\\
	& \leq \bigOp{n^{5}k^{-8}\varepsilon^{-4}\left(\frac{n}{k\varepsilon}\right)^{3-3\tau}}\\
	& =\bigOp{k^{3\tau-11}n^{8-3\tau}}\varepsilon^{3\tau-7},
	\end{aligned}
	\end{equation}
	where we used Lemma~\ref{lem:conddeg}. 
		When $k>\sqrt{n}$ and $\tau\in(2,3)$, this contribution is smaller that $n^{10-4\tau}k^{4\tau-12}$, as required. 
	In the case where $i\neq j$, $u=z$ and $v=w$,we use a similar reasoning as the one in~\eqref{eq:ptriangij} to show that
	\begin{equation}
		\Probn{\triangle_{i,u,v}\triangle_{i,w,u}}-\Probn{\triangle_{i,u,v}}\Probn{\triangle_{i,w,u}}\leq O(n)k^{-2}\varepsilon^{-2}.
	\end{equation}
	Then the contribution of this situation to the variance can be bounded as
	\begin{equation}
	\frac{1}{k^4}\sum_{(u,v)\in W_n^k(\varepsilon)}O(n)k^{-2}\varepsilon^{-2}\leq \bigO{\varepsilon^{-2}n^3k^{-6}\left(\frac{n}{\varepsilon k}\right)^{2-2\tau}} = \bigO{n^{5-2\tau}k^{2\tau-8}}.
	\end{equation} 
	Again, this is smaller than $n^{10-4\tau}k^{4\tau-12}$, as required. 
	Thus, the contribution of $V^{\sss{(4)}}$ is small enough in all three ranges.

	Finally, $V^{\sss{(3)}}$, can be bounded as
	\begin{equation}\label{eq:V3}
		\frac{1}{k^4N_k^2}\sum_{i:D_i=k}\sum_{u,v:(D_u,D_v)\in W_n^k(\varepsilon)}\Probn{\triangle_{i,u,v}}= \frac{1}{k^4N_k}\Expn{c(k,W_n^k(\varepsilon))}= \frac{1}{k^4N_k}\bigOp{f(k,n)}.
	\end{equation}
	In Ranges I and II, we use that $N_k=\bigOp{nk^{-\tau}}$. Thus, this gives a contribution of
	\begin{equation}
	V^{\sss{(3)}}(k)=\bigOp{\frac{n^{2-\tau}\log(n)}{k^{4-\tau}n}}=\bigOp{n^{1-\tau}\log(n)k^{\tau-4}}, 
	\end{equation}
	which is small enough since $n^{1-\tau}k^{\tau-4}<n^{4-2\tau}$ for $\tau\in(2,3)$ and $k<\sqrt{n}$. 
	In Range III, again we assume that $N_k\geq 1$, since otherwise the variance of $c(k)$ would be zero, and therefore small enough. Then~\eqref{eq:V3} gives the bound
	\begin{equation}
	V^{\sss{(3)}}(k)=\bigOp{n^{5-2\tau}k^{2\tau-10}},
	\end{equation}
	which is again smaller than $n^{10-4\tau}k^{4\tau-12}$ for $\tau\in(2,3)$ and $k\gg \sqrt{n}$. Thus, all contributions to the variance are small enough, which proves the claim. 
\end{proof}

\begin{proof}[Proof of Proposition~\ref{prop:major}.]
	Combining Lemma~\ref{lem:condvar} and the fact that $\Prob{J_n}=1-O(n^{-1/\tau})$ shows that
	\begin{equation}
	\frac{c(k,W_n^k(\varepsilon))}{\Expn{c(k,W_n^k(\varepsilon))}}\plim 1.
	\end{equation}
Then, Lemmas,~\ref{lem:convkmiddle} and~\ref{lem:convklarge} show that
	\begin{equation}
	\frac{c(k,W_n^k(\varepsilon))}{f(k,n)}\plim 
	\begin{cases}
	C^2\int_\varepsilon^{1/\varepsilon}t^{1-\tau}\me^{-t}\dd t+O(\varepsilon^\kappa) & k\ll\sqrt{n}\\
	C^2\left(\int_\varepsilon^{1/\varepsilon}t^{1-\tau}\me^{-t}\dd t\right)^2+O(\varepsilon^\kappa) & k\gg\sqrt{n}.
	\end{cases}
	\end{equation}
which proves the proposition.
\end{proof}

	\section{Contributions outside $W_n^k(\varepsilon)$}\label{sec:minor}
	In this section, we show that the contribution of triangles with degrees outside of the major contributing ranges as described in~\eqref{eq:wkn} is negligible. The following lemma bounds the contribution from triangles with vertices with degrees outside of $W_n^k(\varepsilon)$:
	\begin{lemma}\label{lem:bexp}
		There exists $\kappa>0$ such that
		\begin{equation}
			\limsup_{n\to\infty}\frac{\Expn{c(k,\bar{W}_n^k(\varepsilon))}}{f(n,k)}= \bigOp{\varepsilon^{\kappa}}.
		\end{equation}
	\end{lemma}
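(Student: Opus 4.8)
The plan is to bound $\Expn{c(k,\bar W_n^k(\varepsilon))}$ by summing the triangle probabilities $g_n(k,D_u,D_v)$ over all pairs $(u,v)\notin W_n^k(\varepsilon)$, using Lemma~\ref{lem:expk} (which remains valid for the complement since the proof of the triangle-probability estimate~\eqref{eq:probtriang} only used that degrees are $O_{\sss\prob}(n^{1/(\tau-1)})$, which holds uniformly). Concretely, I would start from
\begin{equation}
\Expn{c(k,\bar W_n^k(\varepsilon))}=\frac{1}{k(k-1)}\sum_{(u,v)\notin W_n^k(\varepsilon)}g_n(k,D_u,D_v)\,(1+\op(1)),
\end{equation}
and then use the crude bound $1-\me^{-x}\le\min(1,x)$ on each of the three factors of $g_n$, together with Lemma~\ref{lem:expind} and Lemma~\ref{lem:conddeg} to convert sums over $D_u,D_v$ into moment estimates. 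The complement $\bar W_n^k(\varepsilon)$ splits into finitely many sub-regions — for instance, in Ranges I and II the pairs with $D_uD_v<\varepsilon\mu n$, those with $D_uD_v>\mu n/\varepsilon$, and (in Range II) those with $D_u$ or $D_v>\mu n/(k\varepsilon)$ — and I would estimate each sub-region separately, showing each contributes $\bigOp{\varepsilon^{\kappa}f(n,k)}$ for some $\kappa>0$ (possibly up to an extra $\log$ factor that is absorbed because $f(n,k)$ already carries the right logarithmic weight).

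The key steps in order: (i) establish the complement version of Lemma~\ref{lem:expk}; (ii) partition $\bar W_n^k(\varepsilon)$ into the natural pieces dictated by~\eqref{eq:wkn}; (iii) for the small-product piece $D_uD_v<\varepsilon\mu n$, Taylor-expand the factor $1-\me^{-D_uD_v/L_n}\le D_uD_v/L_n$ and use that the truncated sum $\sum_{D_uD_v<\varepsilon\mu n}(D_uD_v)^{2-\tau}$ picks up a factor $\varepsilon^{3-\tau}$ relative to the full sum (the integral $\int_0^{\varepsilon}t^{1-\tau}(1-\me^{-t})\dd t = O(\varepsilon^{3-\tau})$ is the continuum analogue); (iv) for the large-product piece $D_uD_v>\mu n/\varepsilon$, bound $1-\me^{-D_uD_v/L_n}\le 1$ and use that $\Probn{\mathcal D_u\mathcal D_v>\mu n/\varepsilon}$ is smaller than $\Probn{\mathcal D_u\mathcal D_v>\mu n}$ by a factor $\varepsilon^{\tau-1}$ (up to logs); (v) in Range II, for the piece $D_u>\mu n/(k\varepsilon)$ bound the corresponding factor $1-\me^{-kD_u/L_n}\le 1$ and again gain a power of $\varepsilon$ from the tail of the power law; (vi) in Range III, carry out the analogous splitting around $D_u,D_v\in[\varepsilon,1/\varepsilon]\mu n/k$, using $1-\me^{-kD_u/L_n}\le kD_u/L_n$ on the small side and $\le 1$ on the large side, together with $1-\me^{-D_uD_v/L_n}\le D_uD_v/L_n$ throughout since $D_uD_v=o(n)$ there; (vii) combine to obtain the claimed $\bigOp{\varepsilon^{\kappa}}$ with $\kappa=\min(3-\tau,\tau-1)>0$ or a similar explicit constant.

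The main obstacle I anticipate is the careful accounting of logarithmic factors: $f(n,k)$ contains $\log n$ or $\log(n/k^2)$ in Ranges I and II, and the complement sums can also generate logarithms, so one must check that the $\varepsilon$-power genuinely dominates the ratio rather than being cancelled by a spurious $\log(1/\varepsilon)$ growing as $\varepsilon\to0$. The cleanest way around this is to note that each truncation of a power-law tail at level $\varepsilon\mu n/(\text{something})$ produces a factor that is $\varepsilon^{\tau-1}\log(1/\varepsilon)$ or $\varepsilon^{3-\tau}$ at worst, and since $\varepsilon^\delta\log(1/\varepsilon)=O(\varepsilon^{\delta/2})$ for any $\delta>0$, shrinking $\kappa$ slightly absorbs all such logarithms. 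A second, more technical point is justifying that the $(1+\op(1))$ errors from Lemma~\ref{lem:expk} and from replacing $L_n$ by $\mu n$ on the event $J_n$ remain uniform over the (degree-dependent, hence random) summation regions; this follows because the relative error bounds there are $\op(n^{-(\tau-2)/(\tau-1)})$, uniform in the degrees, so they survive summation.
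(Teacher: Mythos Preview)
Your plan is essentially the paper's proof, but step~(i) contains a technical flaw: Lemma~\ref{lem:expk} does \emph{not} extend to $\bar W_n^k(\varepsilon)$ as you claim. The asymptotic $\Probn{\triangle_{u,v,w}=1}=(1+\op(1))g_n(D_u,D_v,D_w)$ in~\eqref{eq:probtriang} relies not only on the upper bound $D_u,D_v,D_w=O_{\sss\prob}(n^{1/(\tau-1)})$ but also on the lower bound $D_u,D_v,D_w=\Omega(n^{(\tau-2)/(\tau-1)})$ and on the pairwise degree products being $O(n)$; these last two hypotheses are precisely what is violated outside $W_n^k(\varepsilon)$, and without them the relative error in~\eqref{eq:probtriang} is not controlled (for instance, if $D_uD_v\ll n^{(\tau-2)/(\tau-1)}$ the additive error swamps $1-\me^{-D_uD_v/L_n}$).

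The paper bypasses this entirely: it never invokes $g_n$ on the complement, but uses directly the crude edge bound $\Probn{\hat X_{ij}=1}\le\min(1,D_iD_j/L_n)$, which holds unconditionally and yields the product-of-mins upper bound~\eqref{eq:extriangbound} on the triangle probability. Since you immediately apply $1-\me^{-x}\le\min(1,x)$ to each factor of $g_n$ anyway, simply dropping your step~(i) and starting from this direct bound gives exactly the same estimate, and the rest of your plan (steps (ii)--(vii): partition $\bar W_n^k(\varepsilon)$, convert via Lemma~\ref{lem:conddeg} to integrals against the density $(xy)^{-\tau}$, evaluate region by region) matches the paper's computation. One minor quantitative slip: in step~(iv) the large-product piece produces a factor $\varepsilon^{\tau-2}$ rather than $\varepsilon^{\tau-1}$, but this is still positive for $\tau\in(2,3)$ and does not affect the argument.
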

	\begin{proof}
		To compute the expected value of $c(k)$, we use that $\Probn{\hat{X}_{ij}=1}\leq \min(1,\frac{D_iD_l}{L_n})$. This yields
	\begin{equation}\label{eq:extriangbound}
	\Expn{c(k)}\leq \frac{n^2\Expn{\min(1,\frac{k\mathcal{D}_u}{L_n})\min(1,\frac{k\mathcal{D}_v}{L_n})\min(1,\frac{\mathcal{D}_u\mathcal{D}_v}{L_n})}}{k(k-1)}.
	\end{equation}
	Using Lemma~\ref{lem:conddeg}, we obtain
	\begin{equation}\label{eq:ckminor}
		\Expn{c(k)}=n^2k^{-2}\bigOp{\Exp{\min\left(1,\frac{k D_u}{\mu n}\right)\min\left(1,\frac{k D_v}{ \mu n}\right)\min\left(1,\frac{D_uD_v}{\mu n}\right)}},
	\end{equation}
	where $D_u$ and $D_v$ are two independent copies of $D$. 
	Similarly,
	\begin{equation}\label{eq:ckWbar}
	\Expn{c(k,\bar{W}_n^k(\varepsilon))}=n^2k^{-2}\bigOp{\Exp{\min\left(1,\frac{k D_u}{\mu n}\right)\min\left(1,\frac{k D_v}{ \mu n}\right)\min\left(1,\frac{D_uD_v}{\mu n}\right)\ind{(D_u,D_v)\in\bar{W}_n^k(\varepsilon)}}},
	\end{equation}
	where
	\begin{equation}\label{eq:expnoncontr}
	\begin{aligned}[b]
		&\Exp{\min\left(1,\frac{k D_u}{\mu n}\right)\min\left(1,\frac{k D_v}{ \mu n}\right)\min\left(1,\frac{D_uD_v}{\mu n}\right)\ind{(D_u,D_v)\in\bar{W}_n^k(\varepsilon)}}\\
		&\quad = \int \int_{(x,y)\in\bar{W}_n^k(\varepsilon)}(xy)^{-\tau}\min\left(1,\frac{k x}{\mu n}\right)\min\left(1,\frac{k y}{ \mu n}\right)\min\left(1,\frac{xy}{\mu n}\right)\dd y \dd x.
	\end{aligned}
	\end{equation}
	We analyze this expression separately for all three ranges of $k$. 
	For ease of notation, we will assume that $\mu=1$ in the rest of this section.
	
	We first consider Range I, where $k\ll n^{(\tau-2)/(\tau-1)}$. Then we have to show that the contribution from vertices $u$ and $v$ such that $D_uD_v<\varepsilon n$ or $D_uD_v>n/\varepsilon$ is small. 
			First, we study the contribution to~\eqref{eq:expnoncontr} for $D_uD_v<\varepsilon n$. 
		We can bound this contribution by taking the second term of the minimum in all three cases, which gives
		\begin{equation}
		\frac{k^2}{n^3}\int_{1}^{n}\int_{1}^{\varepsilon n/x}(xy)^{2-\tau}\dd y \dd x = \frac{k^2}{n^3}\int_{1}^{n}\frac 1x \int_{x}^{\varepsilon n}u^{2-\tau}\dd u \dd x = \frac{k^2\varepsilon^{3-\tau}}{3-\tau}\bigO{n^{-\tau}\log(n)}.
		\end{equation}
		Then, we study the contribution for $D_uD_v>n/\varepsilon$. This contribution can be bounded very similarly by taking $\frac{kD_u}{L_n}$ and $\frac{kD_uv}{L_n}$ and 1 for the minima in~\eqref{eq:expnoncontr}
		\begin{equation}
		\frac{nk^2}{n^2}\int_{1}^{n}\int_{n/(\varepsilon x)}^n(xy)^{1-\tau}\dd y \dd x = \frac{k^2}{n^2}\int_{1}^{n}\frac 1x \int_{n/\varepsilon}^{nx}u^{1-\tau}\dd u \dd x = \frac{k^2\varepsilon^{\tau-2}}{\tau-2} \bigO{n^{-\tau}\log(n)} .
		\end{equation}
		By~\eqref{eq:ckWbar}, 
		\begin{equation}
			\Expn{c(k,\bar{W}_n^k(\varepsilon))}=\bigOp{n^{2-\tau}\log(n)\varepsilon^{\kappa}}.
		\end{equation}
		Multiplying by $n^2k^{-2}$ and dividing by $n^{2-\tau}\log(n)$ and taking the limit for $n\to\infty$ then proves the lemma in Range I by~\eqref{eq:ckminor}. 		
	
	Now we consider Range II, where $an^{(\tau-2)/(\tau-1)}\leq k\ll \sqrt{n}$ for some $a>0$. We show that the contribution from vertices $u$ and $v$ such that $D_uD_v<\varepsilon n$ or $D_uD_v>n/\varepsilon$ or $D_u,D_v>n/(k\varepsilon)$ is small. 
		We first show that the contribution to~\eqref{eq:expnoncontr} for $D_u>n/(k\varepsilon)$ is small. In this setting, $D_uk>n$, so that the first minimum in~\eqref{eq:expnoncontr} is attained by 1. The contribution can be computed as
		\begin{equation}
		\begin{aligned}[b]
			& \int_{n/(k\varepsilon)}^\infty \int_1^\infty(xy)^{-\tau}\min\left(1,\frac{k y}{ n}\right)\min\left(1,\frac{xy}{n}\right)\dd y \dd x\\ & =\frac{k}{n^2}\int_{n/(\varepsilon k)}^{\infty}\int_{1}^{n/x}x^{1-\tau}y^{2-\tau}\dd y\dd x + 	\frac{k}{n}\int_{n/(k\varepsilon)}^{\infty}\int_{n/x}^{n/k}x^{-\tau}y^{1-\tau}\dd y\dd x\\
			& \quad  + 	\int_{n/(k\varepsilon)}^{\infty}\int_{n/k}^{\infty}x^{-\tau}y^{-\tau}\dd y\dd x\\
			&= k^2\bigO{n^{-\tau}}+ k^2\bigO{n^{-\tau}} +  \varepsilon^{\tau-1}\bigO{n^{2-2\tau}k^{2\tau-2}}.
		\end{aligned}
		\end{equation}
		By~\eqref{eq:ckminor}, multiplying by $n^{2}k^{-2}$ and then dividing by $n^{2-\tau}\log(n/k^2)$ and letting $n$ go to infinity shows that this contribution is small. 
		Thus, we may assume that $D_u,D_v<n/(k\varepsilon)$.
		Now we show that the contribution from $D_uD_v<\varepsilon n$ is negligible. Then, $D_uD_v<n$, so that the third minimum in~\eqref{eq:expnoncontr} is attained for $D_uD_v/n$. The contribution then splits into various cases, depending on $D_u$. 
		\begin{equation}
			\begin{aligned}[b]
				& \frac1n \int  \int_{xy<\varepsilon n}(xy)^{1-\tau}\min\left(1,\frac{k x}{ n}\right)\min\left(1,\frac{k y}{n}\right)\dd y \dd x \\
				&=	\int_{1}^{k}\int_{1}^{\varepsilon n/x}(xy)^{-\tau}\frac{kx^2y}{L_n^2}\dd y \dd x +\int_{k}^{n/k}\int_{1}^{\varepsilon n/x}(xy)^{-\tau}\frac{k^2x^2y^2}{L_n^3}\dd y \dd x  + \int_{n/k}^{\infty}\int_{1}^{\varepsilon n/x}(xy)^{-\tau}\frac{kxy^2}{L_n^3}\dd y \dd x\\
				& = k^2\bigO{n^{-\tau}}\varepsilon^{2-\tau}+ k^2 \varepsilon n^{-\tau}\bigO{\log(n/k^2)}+ k^2\bigO{n^{-\tau}}\varepsilon^{3-\tau}.
			\end{aligned}
		\end{equation}
		The contribution of $D_uD_v>n/\varepsilon$ can be bounded similarly as
		\begin{equation}
		\begin{aligned}[b]
			&  \int  \int_{xy>n/\varepsilon }(xy)^{-\tau}\min\left(1,\frac{k x}{ n}\right)\min\left(1,\frac{k y}{n}\right)\dd y \dd x \\
			& = \int_1^{k}\int_{n/(\varepsilon x)}^\infty(xy)^{-\tau}\frac{kx}{L_n}\dd y \dd x + 	\int_{k}^{n/k}\int_{n/(\varepsilon x)}^\infty(xy)^{-\tau}\frac{k^2xy}{L_n^2}\dd y \dd x  + 	\int_{n/k}^\infty\int_{n/(\varepsilon x)}^\infty(xy)^{-\tau}\frac{ky}{L_n}\dd y \dd x\\
			& =  k^2\varepsilon^{\tau-1}\bigO{n^{-\tau}} + k^2\varepsilon^{\tau-2}\bigO{n^{-\tau}\log(n/k^2)} +  k^2\bigO{n^{-\tau}}\varepsilon^{\tau-2}.
		\end{aligned}
		\end{equation}
		By~\eqref{eq:ckWbar}, multiplying by $k^{-2}n^2$ and then dividing by $k(k-1) n^{2-\tau}\log(n/k^2)$ proves the lemma in Range II.

	Finally, we prove the lemma in Range III, where $k\gg\sqrt{n}$. Here we have to show that the contribution from $D_u, D_v<\varepsilon n/k$ or  $D_u,D_v>n/(\varepsilon k)$ is small. 
		We again bound this by using~\eqref{eq:expnoncontr}. 
		The contribution to~\eqref{eq:expnoncontr} for $D_u>n/(k\varepsilon)$ can be computed as
		\begin{equation}
		\begin{aligned}[b]
			& \int_{n/(k\varepsilon)}^\infty \int_1^\infty(xy)^{-\tau}\min\left(1,\frac{k y}{ n}\right)\min\left(1,\frac{xy}{n}\right)\dd y \dd x\\
			& = \int_{\frac{n}{k\varepsilon}}^{k}\int_{n/x}^{\infty}x^{-\tau}y^{-\tau}\dd y\dd x + \int_{\frac{n}{k\varepsilon}}^{k}\int_{n/k}^{n/x}\frac 1n x^{-\tau+1}y^{-\tau+1}\dd y\dd x + 	\int_{\frac{n}{k\varepsilon}}^{k}\int_{0}^{n/k}\frac {k}{n^2} x^{-\tau+1}y^{-\tau+2}\dd y\dd x\\
			& \quad + 	\int_{k}^\infty\int_{n/k}^{\infty}x^{-\tau}y^{-\tau}\dd y\dd x + \int_{k}^\infty\int_{n/x}^{n/k}\frac kn x^{-\tau}y^{-\tau+1}\dd y\dd x + \int_{k}^{\infty}\int_{0}^{n/x}\frac {k}{n^2} x^{-\tau+1}y^{-\tau+2}\dd y\dd x\\
			&=			
			\bigO{\log\left(\frac{k^2\varepsilon}{n}\right)n^{1-\tau}} + \bigO{\varepsilon^{\tau-2}k^{2\tau-4}n^{3-2\tau}}+ \bigO{n^{1-\tau}}+\bigO{\varepsilon^{\tau-2}n^{3-2\tau}k^{2\tau-4} }\\
			&\quad + \bigO{n^{1-\tau}} + \bigO{n^{1-\tau} }+ \bigO{n^{1-\tau}} =  \bigO{\varepsilon^{\tau-2}k^{2\tau-4}n^{3-2\tau}}.
		\end{aligned}
		\end{equation}
		Multiplying this by $n^2k^{-2}$ and then dividing by $n^{5-2\tau}k^{2\tau-6}$ shows that this contribution is small.

		Then we study the contribution to~\eqref{eq:expnoncontr} for $D_1<\varepsilon n/k$. This can be computed as
		\begin{equation}
		\begin{aligned}[b]
			& \frac 1n\int_1^{\varepsilon n/k} \int_1^\infty(xy)^{1-\tau}\min\left(1,\frac{k y}{ n}\right)\min\left(1,\frac{xy}{n}\right)\dd y \dd x\\
			& = \int_0^{\frac{n\varepsilon}{k}}\int_{0}^{n/k}\frac{k^2}{n^3}x^{-\tau+2}y^{-\tau+2}\dd y\dd x + \int_0^{\frac{n\varepsilon}{k}}\int_{n/k}^{n/x}\frac{k}{n^2}x^{-\tau+2}y^{-\tau+1}\dd y\dd x + 
			\int_0^{\frac{n\varepsilon}{k}}\int_{n/x}^{\infty}\frac{k}{n}x^{-\tau+1}y^{-\tau}\dd y\dd x \\
			& = \bigO{\varepsilon^{3-\tau} k^{2\tau-4}n^{3-2\tau} }+ \bigO{\varepsilon^{3-\tau} k^{2\tau-4}n^{3-2\tau}}+ \bigO{\varepsilon n^{1-\tau}} =  \bigO{\varepsilon^{\tau-2}k^{2\tau-4}n^{3-2\tau}}.
		\end{aligned}
	\end{equation}
		Thus, dividing these estimates by $n^{3-2\tau}k^{2\tau-6}$ and noting that $n^{1-\tau}<n^{3-2\tau}k^{2\tau-4}$ for $\sqrt{n}\ll k\ll n$ completes the proof in Range III. 
	\end{proof}

\subsection{Proof of Theorem~\ref{thm:sqrt}}\label{sec:proofthm}
	We now show how we adjust the proof of Theorem~\ref{thm:ck} to prove Theorem~\ref{thm:sqrt}. We use the same major contributing triangles as the ones in Range III in~\eqref{eq:wkn}. Then, in fact Lemmas~\ref{lem:expk},~\ref{lem:condvar} and Proposition~\ref{prop:minor} still hold. It is easy to derive a similar lemma as Lemma~\ref{lem:convklarge} for the situation $k=\Theta(\sqrt{n})$. The only difference with the proof of Lemma~\ref{lem:convklarge} is that we do not Taylor expand the exponentials in~\eqref{eq:cklargeexp}. This then proves Theorem~\ref{thm:sqrt}.\hfill \qed
	
\subsection{Proof of Theorem \ref{theorem3}}\label{sec:proofthm3}
We now prove that the scaling limit of $k\mapsto c(k)$ is continuous around $k=\sqrt{n}$.
When $B$ is large, we rewrite~\eqref{eq:cksqrt} as
\begin{equation}\label{eq:cksqrtlarge}
\frac{c(k)}{n^{2-\tau}}  \plim C^2\mu^{2-2\tau}B^{2\tau-4}\int_{0}^{\infty}\int_{0}^{\infty}(xy)^{-\tau}(1-\me^{-x})(1-\me^{-y})(1-\me^{-xy\mu/B^2})\dd x\dd y.
\end{equation}
Taylor expanding the last exponential then yields
\begin{equation}
\begin{aligned}[b]
\frac{c(k)}{n^{2-\tau}}& = (1+o(1))C^2\mu^{3-2\tau}B^{2\tau-6}\int_{0}^{\infty}\int_{0}^{\infty}(xy)^{1-\tau}(1-\me^{-x})(1-\me^{-y})\dd x\dd y \\
& = (1+o(1))C^2\mu^{3-2\tau}B^{2\tau-6}A^2.
\end{aligned}
\end{equation}
Substituting $k=B\sqrt{n}$ in Range III of Theorem~\ref{thm:ck} gives
\begin{equation}
\frac{c(k)}{n^{2-\tau}}  =(1+o(1)) C^2\mu^{3-2\tau}B^{2\tau-6}A^2,
\end{equation}
which is the same as the result obtained from Theorem~\ref{thm:sqrt}. Therefore, the scaling limit of $k\mapsto c(k)$ is smooth for $k>\sqrt{n}$. 

For $B$ small, we can Taylor expand the first two exponentials in~\eqref{eq:cksqrt} as long as $x,y\ll 1/B$. The contribution where $x,y<1/B$ and $B<xy<1/B$ can be written as 
\begin{equation}
	\begin{aligned}[b]
	& C^2\mu^{2-2\tau}\left(\int_{B^2}^{1}\int_{B/x}^{1/B}(xy)^{1-\tau}(1-\me^{-\mu xy})\dd x\dd y+\int_{1}^{1/B}\int_{B/x}^{1/(Bx)}(xy)^{1-\tau}(1-\me^{-\mu xy})\dd x\dd y\right)\\
	& = C^2\mu^{-\tau}\left(\int_{B^2}^{1}\int_{B}^{v/B}\frac{1}{v}u^{1-\tau}(1-\me^{-u})\dd u\dd v+\int_{1}^{1/B}\int_{B}^{1/B}\frac{1}{v}u^{1-\tau}(1-\me^{-u})\dd u\dd v\right)\\
	& =  C^2\mu^{-\tau}\left(\log(1/B^2)\int_{B}^{1/B}u^{1-\tau}(1-\me^{-u})\dd u+\int_{B}^{1/B}\log(1/u)u^{1-\tau}(1-\me^{-u})\dd u\right).
	\end{aligned}
\end{equation}
The contribution of the second integral becomes small compared to the first part as $B$ gets small, as the second integral is finite for $B>0$. We can show that the contributions from $x,y>1/B$, or from $xy>1/B$ can also be neglected by using that $1-\me^{-x}\leq \min(1,x)$. Thus, as $B$ becomes very small, Theorem~\ref{thm:sqrt} shows that $c(k)$ for $k=B\sqrt{n}$ can be approximated by
\begin{equation}
\frac{c(k)}{n^{2-\tau}}\approx C^2\log(B^{-2})\int_{0}^{\infty}u^{1-\tau}(1-\me^{-u})\dd u,
\end{equation}
which agrees with the value for $k=B\sqrt{n}$ in Range II of Theorem~\ref{thm:ck}.

To prove the continuity around $k=n^{(\tau-2)/(\tau-1)}$, we fill in $k=an^{(\tau-2)/(\tau-1)}$ in Range II of Theorem~\ref{thm:ck}, which yields
\begin{equation}
c(k)=n^{2-\tau}\mu^{-\tau}C^2A\left(\frac{3-\tau}{\tau-1}\log(n)+\log(a^{-2}\right)(1+\op(1)).
\end{equation}
This agrees with the $k\mapsto c(k)$ curve in Range I when $n$ grows large.\hfill \qed
\medskip

\paragraph{\bf Acknowledgement.} This work was supported by NWO TOP grant 613.001.451 and by the NWO Gravitation Networks grant 024.002.003.
The work of RvdH is further supported by the NWO VICI grant 639.033.806.  The work of JvL is further supported by an NWO TOP-GO grant and by an ERC Starting Grant.

\bibliographystyle{abbrv}      
\bibliography{../references}

\end{document}